\numberwithin{equation}{section}
\numberwithin{figure}{section}
\numberwithin{table}{section}
  \theoremstyle{plain}
  \newtheorem*{thm*}{\protect\theoremname}
\theoremstyle{plain}
\newtheorem{thm}{\protect\theoremname}[section]
  \theoremstyle{definition}
  \newtheorem{defn}[thm]{\protect\definitionname}
  \theoremstyle{remark}
  \newtheorem{rem}[thm]{\protect\remarkname}
  \theoremstyle{plain}
  \newtheorem{prop}[thm]{\protect\propositionname}
  \theoremstyle{definition}
  \newtheorem{example}[thm]{\protect\examplename}
  \theoremstyle{plain}
  \newtheorem{lem}[thm]{\protect\lemmaname}
  \theoremstyle{plain}
  \newtheorem{cor}[thm]{\protect\corollaryname}
\DeclareMathOperator{\Add}{\textup{Add}}
\DeclareMathOperator{\Aff}{\textup{Aff}}
\DeclareMathOperator{\Alg}{\textup{Alg}}
\DeclareMathOperator{\Ann}{\textup{Ann}}
\DeclareMathOperator{\Arr}{\textup{Arr}}
\DeclareMathOperator{\Art}{\textup{Art}}
\DeclareMathOperator{\Ass}{\textup{Ass}}
\DeclareMathOperator{\Aut}{\textup{Aut}}
\DeclareMathOperator{\Autsh}{\underline{\textup{Aut}}}
\DeclareMathOperator{\Bi}{\textup{B}}
\DeclareMathOperator{\CAdd}{\textup{CAdd}}
\DeclareMathOperator{\CAlg}{\textup{CAlg}}
\DeclareMathOperator{\CMon}{\textup{CMon}}
\DeclareMathOperator{\CPMon}{\textup{CPMon}}
\DeclareMathOperator{\CRings}{\textup{CRings}}
\DeclareMathOperator{\CSMon}{\textup{CSMon}}
\DeclareMathOperator{\CaCl}{\textup{CaCl}}
\DeclareMathOperator{\Cart}{\textup{Cart}}
\DeclareMathOperator{\Cl}{\textup{Cl}}
\DeclareMathOperator{\Coh}{\textup{Coh}}
\DeclareMathOperator{\Coker}{\textup{Coker}}
\DeclareMathOperator{\Cov}{\textup{Cov}}
\DeclareMathOperator{\Der}{\textup{Der}}
\DeclareMathOperator{\Div}{\textup{Div}}
\DeclareMathOperator{\End}{\textup{End}}
\DeclareMathOperator{\Endsh}{\underline{\textup{End}}}
\DeclareMathOperator{\Ext}{\textup{Ext}}
\DeclareMathOperator{\Extsh}{\underline{\textup{Ext}}}
\DeclareMathOperator{\FAdd}{\textup{FAdd}}
\DeclareMathOperator{\FCoh}{\textup{FCoh}}
\DeclareMathOperator{\FGrad}{\textup{FGrad}}
\DeclareMathOperator{\FLoc}{\textup{FLoc}}
\DeclareMathOperator{\FMod}{\textup{FMod}}
\DeclareMathOperator{\FPMon}{\textup{FPMon}}
\DeclareMathOperator{\FRep}{\textup{FRep}}
\DeclareMathOperator{\FSMon}{\textup{FSMon}}
\DeclareMathOperator{\FVect}{\textup{FVect}}
\DeclareMathOperator{\Fibr}{\textup{Fibr}}
\DeclareMathOperator{\Fix}{\textup{Fix}}
\DeclareMathOperator{\Fl}{\textup{Fl}}
\DeclareMathOperator{\Fr}{\textup{Fr}}
\DeclareMathOperator{\Funct}{\textup{Funct}}
\DeclareMathOperator{\GAlg}{\textup{GAlg}}
\DeclareMathOperator{\GExt}{\textup{GExt}}
\DeclareMathOperator{\GHom}{\textup{GHom}}
\DeclareMathOperator{\GL}{\textup{GL}}
\DeclareMathOperator{\GMod}{\textup{GMod}}
\DeclareMathOperator{\GRis}{\textup{GRis}}
\DeclareMathOperator{\GRiv}{\textup{GRiv}}
\DeclareMathOperator{\Gal}{\textup{Gal}}
\DeclareMathOperator{\Gl}{\textup{Gl}}
\DeclareMathOperator{\Grad}{\textup{Grad}}
\DeclareMathOperator{\Hilb}{\textup{Hilb}}
\DeclareMathOperator{\Hl}{\textup{H}}
\DeclareMathOperator{\Hom}{\textup{Hom}}
\DeclareMathOperator{\Homsh}{\underline{\textup{Hom}}}
\DeclareMathOperator{\ISym}{\textup{Sym}^*}
\DeclareMathOperator{\Imm}{\textup{Im}}
\DeclareMathOperator{\Irr}{\textup{Irr}}
\DeclareMathOperator{\Iso}{\textup{Iso}}
\DeclareMathOperator{\Isosh}{\underline{\textup{Iso}}}
\DeclareMathOperator{\Ker}{\textup{Ker}}
\DeclareMathOperator{\LAdd}{\textup{LAdd}}
\DeclareMathOperator{\LAlg}{\textup{LAlg}}
\DeclareMathOperator{\LMon}{\textup{LMon}}
\DeclareMathOperator{\LPMon}{\textup{LPMon}}
\DeclareMathOperator{\LRings}{\textup{LRings}}
\DeclareMathOperator{\LSMon}{\textup{LSMon}}
\DeclareMathOperator{\Left}{\textup{L}}
\DeclareMathOperator{\Lex}{\textup{Lex}}
\DeclareMathOperator{\Loc}{\textup{Loc}}
\DeclareMathOperator{\M}{\textup{M}}
\DeclareMathOperator{\ML}{\textup{ML}}
\DeclareMathOperator{\MLex}{\textup{MLex}}
\DeclareMathOperator{\Map}{\textup{Map}}
\DeclareMathOperator{\Mod}{\textup{Mod}}
\DeclareMathOperator{\Mon}{\textup{Mon}}
\DeclareMathOperator{\Ob}{\textup{Ob}}
\DeclareMathOperator{\Obj}{\textup{Obj}}
\DeclareMathOperator{\PDiv}{\textup{PDiv}}
\DeclareMathOperator{\PGL}{\textup{PGL}}
\DeclareMathOperator{\PML}{\textup{PML}}
\DeclareMathOperator{\PMLex}{\textup{PMLex}}
\DeclareMathOperator{\PMon}{\textup{PMon}}
\DeclareMathOperator{\Pic}{\textup{Pic}}
\DeclareMathOperator{\Picsh}{\underline{\textup{Pic}}}
\DeclareMathOperator{\Pro}{\textup{Pro}}
\DeclareMathOperator{\Proj}{\textup{Proj}}
\DeclareMathOperator{\QAdd}{\textup{QAdd}}
\DeclareMathOperator{\QAlg}{\textup{QAlg}}
\DeclareMathOperator{\QCoh}{\textup{QCoh}}
\DeclareMathOperator{\QMon}{\textup{QMon}}
\DeclareMathOperator{\QPMon}{\textup{QPMon}}
\DeclareMathOperator{\QRings}{\textup{QRings}}
\DeclareMathOperator{\QSMon}{\textup{QSMon}}
\DeclareMathOperator{\R}{\textup{R}}
\DeclareMathOperator{\Rep}{\textup{Rep}}
\DeclareMathOperator{\Rings}{\textup{Rings}}
\DeclareMathOperator{\Riv}{\textup{Riv}}
\DeclareMathOperator{\SFibr}{\textup{SFibr}}
\DeclareMathOperator{\SMLex}{\textup{SMLex}}
\DeclareMathOperator{\SMex}{\textup{SMex}}
\DeclareMathOperator{\SMon}{\textup{SMon}}
\DeclareMathOperator{\SchI}{\textup{SchI}}
\DeclareMathOperator{\Sh}{\textup{Sh}}
\DeclareMathOperator{\Soc}{\textup{Soc}}
\DeclareMathOperator{\Spec}{\textup{Spec}}
\DeclareMathOperator{\Specsh}{\underline{\textup{Spec}}}
\DeclareMathOperator{\Stab}{\textup{Stab}}
\DeclareMathOperator{\Supp}{\textup{Supp}}
\DeclareMathOperator{\Sym}{\textup{Sym}}
\DeclareMathOperator{\TMod}{\textup{TMod}}
\DeclareMathOperator{\Top}{\textup{Top}}
\DeclareMathOperator{\Tor}{\textup{Tor}}
\DeclareMathOperator{\Vect}{\textup{Vect}}
\DeclareMathOperator{\alt}{\textup{ht}}
\DeclareMathOperator{\car}{\textup{char}}
\DeclareMathOperator{\codim}{\textup{codim}}
\DeclareMathOperator{\degtr}{\textup{degtr}}
\DeclareMathOperator{\depth}{\textup{depth}}
\DeclareMathOperator{\divis}{\textup{div}}
\DeclareMathOperator{\et}{\textup{et}}
\DeclareMathOperator{\ffpSch}{\textup{ffpSch}}
\DeclareMathOperator{\h}{\textup{h}}
\DeclareMathOperator{\ilim}{\displaystyle{\lim_{\longrightarrow}}}
\DeclareMathOperator{\ind}{\textup{ind}}
\DeclareMathOperator{\indim}{\textup{inj dim}}
\DeclareMathOperator{\lf}{\textup{LF}}
\DeclareMathOperator{\op}{\textup{op}}
\DeclareMathOperator{\ord}{\textup{ord}}
\DeclareMathOperator{\pd}{\textup{pd}}
\DeclareMathOperator{\plim}{\displaystyle{\lim_{\longleftarrow}}}
\DeclareMathOperator{\pr}{\textup{pr}}
\DeclareMathOperator{\pt}{\textup{pt}}
\DeclareMathOperator{\rk}{\textup{rk}}
\DeclareMathOperator{\tr}{\textup{tr}}
\DeclareMathOperator{\type}{\textup{r}}
\DeclareMathOperator*{\colim}{\textup{colim}}
\renewcommand{\Lex}{\textup{Mon}}
\theoremstyle{plain}
\newtheorem{thma}{Theorem}
\newtheorem{thmb}{Theorem}
\newtheorem{thmc}{Theorem}
\author[Tonini]{Fabio Tonini}
\address[Tonini]{Freie Universit\"at Berlin\\
    FB Mathematik und Informatik\\
    Arnimallee 3\\ Zimmer 112A\\
    14195 Berlin\\ Deutschland}
\email{tonini@zedat.fu-berlin.de}
  \providecommand{\corollaryname}{Corollary}
  \providecommand{\definitionname}{Definition}
  \providecommand{\examplename}{Example}
  \providecommand{\lemmaname}{Lemma}
  \providecommand{\propositionname}{Proposition}
  \providecommand{\remarkname}{Remark}
  \providecommand{\theoremname}{Theorem}
\providecommand{\theoremname}{Theorem}
\begin{document}

\title{Ramified Galois covers via monoidal functors}

\maketitle
\global\long\def\A{\mathbb{A}}

\global\long\def\Ab{(\textup{Ab})}

\global\long\def\C{\mathbb{C}}

\global\long\def\Cat{(\textup{cat})}

\global\long\def\Di#1{\textup{D}(#1)}

\global\long\def\E{\mathcal{E}}

\global\long\def\F{\mathbb{F}}

\global\long\def\GCov{G\textup{-Cov}}

\global\long\def\Gcat{(\textup{Galois cat})}

\global\long\def\Gfsets#1{#1\textup{-fsets}}

\global\long\def\Gm{\mathbb{G}_{m}}

\global\long\def\GrCov#1{\textup{D}(#1)\textup{-Cov}}

\global\long\def\Grp{(\textup{Grps})}

\global\long\def\Gsets#1{(#1\textup{-sets})}

\global\long\def\HCov{H\textup{-Cov}}

\global\long\def\MCov{\textup{D}(M)\textup{-Cov}}

\global\long\def\MHilb{M\textup{-Hilb}}

\global\long\def\N{\mathbb{N}}

\global\long\def\PGor{\textup{PGor}}

\global\long\def\PGrp{(\textup{Profinite Grp})}

\global\long\def\PP{\mathbb{P}}

\global\long\def\Pj{\mathbb{P}}

\global\long\def\Q{\mathbb{Q}}

\global\long\def\RCov#1{#1\textup{-Cov}}

\global\long\def\RR{\mathbb{R}}

\global\long\def\Sch{\textup{Sch}}

\global\long\def\WW{\textup{W}}

\global\long\def\Z{\mathbb{Z}}

\global\long\def\acts{\curvearrowright}

\global\long\def\alA{\mathscr{A}}

\global\long\def\alB{\mathscr{B}}

\global\long\def\arr{\longrightarrow}

\global\long\def\arrdi#1{\xlongrightarrow{#1}}

\global\long\def\catC{\mathscr{C}}

\global\long\def\catD{\mathscr{D}}

\global\long\def\catF{\mathscr{F}}

\global\long\def\catG{\mathscr{G}}

\global\long\def\comma{,\ }

\global\long\def\covU{\mathcal{U}}

\global\long\def\covV{\mathcal{V}}

\global\long\def\covW{\mathcal{W}}

\global\long\def\duale#1{{#1}^{\vee}}

\global\long\def\fasc#1{\widetilde{#1}}

\global\long\def\fsets{(\textup{f-sets})}

\global\long\def\iL{r\mathscr{L}}

\global\long\def\id{\textup{id}}

\global\long\def\la{\langle}

\global\long\def\odi#1{\mathcal{O}_{#1}}

\global\long\def\ra{\rangle}

\global\long\def\set{(\textup{Sets})}

\global\long\def\sets{(\textup{Sets})}

\global\long\def\shA{\mathcal{A}}

\global\long\def\shB{\mathcal{B}}

\global\long\def\shC{\mathcal{C}}

\global\long\def\shD{\mathcal{D}}

\global\long\def\shE{\mathcal{E}}

\global\long\def\shF{\mathcal{F}}

\global\long\def\shG{\mathcal{G}}

\global\long\def\shH{\mathcal{H}}

\global\long\def\shI{\mathcal{I}}

\global\long\def\shJ{\mathcal{J}}

\global\long\def\shK{\mathcal{K}}

\global\long\def\shL{\mathcal{L}}

\global\long\def\shM{\mathcal{M}}

\global\long\def\shN{\mathcal{N}}

\global\long\def\shO{\mathcal{O}}

\global\long\def\shP{\mathcal{P}}

\global\long\def\shQ{\mathcal{Q}}

\global\long\def\shR{\mathcal{R}}

\global\long\def\shS{\mathcal{S}}

\global\long\def\shT{\mathcal{T}}

\global\long\def\shU{\mathcal{U}}

\global\long\def\shV{\mathcal{V}}

\global\long\def\shW{\mathcal{W}}

\global\long\def\shX{\mathcal{X}}

\global\long\def\shY{\mathcal{Y}}

\global\long\def\shZ{\mathcal{Z}}

\global\long\def\st{\ | \ }

\global\long\def\stA{\mathcal{A}}

\global\long\def\stB{\mathcal{B}}

\global\long\def\stC{\mathcal{C}}

\global\long\def\stD{\mathcal{D}}

\global\long\def\stE{\mathcal{E}}

\global\long\def\stF{\mathcal{F}}

\global\long\def\stG{\mathcal{G}}

\global\long\def\stH{\mathcal{H}}

\global\long\def\stI{\mathcal{I}}

\global\long\def\stJ{\mathcal{J}}

\global\long\def\stK{\mathcal{K}}

\global\long\def\stL{\mathcal{L}}

\global\long\def\stM{\mathcal{M}}

\global\long\def\stN{\mathcal{N}}

\global\long\def\stO{\mathcal{O}}

\global\long\def\stP{\mathcal{P}}

\global\long\def\stQ{\mathcal{Q}}

\global\long\def\stR{\mathcal{R}}

\global\long\def\stS{\mathcal{S}}

\global\long\def\stT{\mathcal{T}}

\global\long\def\stU{\mathcal{U}}

\global\long\def\stV{\mathcal{V}}

\global\long\def\stW{\mathcal{W}}

\global\long\def\stX{\mathcal{X}}

\global\long\def\stY{\mathcal{Y}}

\global\long\def\stZ{\mathcal{Z}}

\global\long\def\then{\ \Longrightarrow\ }

\global\long\def\L{\textup{L}}

\global\long\def\l{\textup{l}}

\begin{abstract}
We interpret Galois covers in terms of particular monoidal functors,
extending the correspondence between torsors and fiber functors. As
applications we characterize tame $G$-covers between normal varieties
for finite and étale group schemes and we prove that, if $G$ is a
finite, flat and finitely presented nonabelian and linearly reductive
group scheme over a ring, then the moduli stack of $G$-covers is
reducible.
\end{abstract}

\section*{Introduction}

Let $R$ be a base commutative ring and $G$ be a flat, finite and
finitely presented group scheme over $R$. In \cite{Tonini2011} I
introduced the notion of a ramified Galois cover with group $G$,
briefly a $G$-cover, and the stack $\GCov$ of such objects (see
\ref{def: G cover} for details). This stack is algebraic and of finite
type over $R$ and contains $\Bi_{R}G$, the stack of $G$-torsors,
as an open substack. If $G$ is diagonalizable, its nice representation
theory makes it possible to study $G$-covers in terms of simplified
data (collections of invertible sheaves and morphisms between them)
and to investigate the geometry of the moduli $\GCov$ (see \cite{Tonini2011}). 

The general case is much harder, even when $G$ is a constant group
over an algebraically closed field of characteristic zero: a direct
approach as in the diagonalizable case fails because of the complexity
of the representation theory of $G$. Thus in order to handle general
$G$-covers one needs a different perspective and Tannaka's duality
comes into play. The $G$-torsors are very special $G$-covers and
the solution of Tannaka's reconstruction problem asserts that they
can be described in terms of particular strong monoidal functors with
domain $\Loc^{G}R$, the category of $G$-comodules over $R$ which
are projective and finitely generated as $R$-modules. If $\stX$
is an algebraic stack, denote by $\Loc\stX$ (resp. $\QCoh\stX$)
the category of locally free of finite rank (resp. quasi-coherent)
sheaves on $\stX$, so that $\Loc\Bi_{R}G\simeq\Loc^{G}R$. When $\stX=\Spec A$
we simply write $\Loc A$ and $\QCoh A$. The result about $G$-torsors
can be stated as follows.
\begin{thm*}
(\cite[Theorem 3.2]{Deligne1982}, \cite[Theorem 1.3.2]{Schappi2011})
Let $\SMon_{R}^{G}$ be the stack over $R$ whose fiber over an $R$-scheme
$T$ is the category of $R$-linear, exact (on short exact sequences)
and strong monoidal functors $\Loc^{G}R\arr\Loc T$. Then the functor
  \[   \begin{tikzpicture}[xscale=3.2,yscale=-0.7]     \node (A0_0) at (0, 0) {$\Bi_R G $};     \node (A0_1) at (1, 0) {$\SMon^G_R$};     \node (A1_0) at (0, 1) {$( T\arrdi s \Bi_R G)$};     \node (A1_1) at (1, 1) {$s^*_{|\Loc^GR}$};     \path (A0_0) edge [->]node [auto] {$\scriptstyle{\Delta}$} (A0_1);     \path (A1_0) edge [|->,gray]node [auto] {$\scriptstyle{}$} (A1_1);   \end{tikzpicture}   \] is
an equivalence of stacks.
\end{thm*}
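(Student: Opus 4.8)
The plan is to realise $\Delta$ as a morphism of stacks over $R$ and to prove it is fully faithful and essentially surjective; since $\Bi_R G$ is a stack by fppf descent of torsors and $\SMon^G_R$ is one because $\Loc(-)$ together with $R$-linearity, exactness and the monoidal structure all descend along fppf covers, this suffices. First I would check that $\Delta$ is well defined and pseudo-functorial. For $s\colon T\arr\Bi_R G$ the pullback $s^*$ sends $\Loc\Bi_R G=\Loc^G R$ into $\Loc T$ and is $R$-linear and strong monoidal, since $s^*$ preserves tensor products and the unit; it is exact on $\Loc^G R$ because the objects there are $R$-projective, so short exact sequences split and are preserved by any additive functor. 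Compatibility with a base change $T'\arr T$ is the pseudo-functoriality of pullback, so $s\mapsto s^*_{|\Loc^G R}$ is genuinely a $1$-morphism of stacks.

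Next I would build an explicit quasi-inverse $\Psi$. Write $A=\odi G$ for the regular representation; as $G$ is finite, flat and finitely presented, $A$ is finite locally free over $R$ and is a commutative Hopf-algebra object of $\Loc^G R$. Given $\omega\in\SMon^G_R(T)$, strong monoidality turns $\omega(A)$ into a commutative $\odi T$-algebra, and I set $P_\omega:=\Specsh_T\omega(A)$. A $G$-action on $P_\omega$ is obtained by applying $\omega$ to the comultiplication, viewed as a morphism $\Delta_A\colon A\arr A\otimes\underline A$ in $\Loc^G R$ (here $\underline A$ is $A$ with trivial coaction); since $\omega(\underline A)=\odi T\otimes_R A$, this yields a coaction $\omega(A)\arr\omega(A)\otimes_R A$. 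I would then define $\Psi(\omega)=P_\omega$ with this $G$-action.

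The conceptual heart, and the step I expect to be the main obstacle, is to prove that $P_\omega$ is a $G$-torsor. There are two inputs. First, being strong monoidal, $\omega$ preserves dual objects and hence the categorical rank, so $\omega(A)$ is locally free of rank $\rk_R A=|G|$ and $P_\omega\arr T$ is finite and faithfully flat. Second, the Hopf--Galois (shear) morphism $\beta\colon\underline A\otimes A\arr A\otimes A$, $a\otimes b\mapsto\sum a\,b_{(1)}\otimes b_{(2)}$, is an isomorphism in $\Loc^G R$ precisely because $A$ has an antipode; applying the strong monoidal $\omega$ and using $\omega(\underline A\otimes A)=A\otimes_R\omega(A)$ gives an isomorphism
\[ A\otimes_R\omega(A)\ \arrdi{\sim}\ \omega(A)\otimes_{\odi T}\omega(A), \]
which upon taking $\Specsh_T$ is exactly the torsor condition $G\times_R P_\omega\simeq P_\omega\times_T P_\omega$. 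Together with faithful flatness this shows $P_\omega$ is a $G$-torsor. The delicate point is to pin down $\Delta_A$ and $\beta$ as honest morphisms of $\Loc^G R$ and to verify that $\omega$ carries them to the structure maps of a torsor over $T$.

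Finally I would show $\Delta$ and $\Psi$ are mutually quasi-inverse. For $\Delta\Psi$, the associated-bundle functor $\omega_{P_\omega}\colon V\mapsto (V\otimes_R\odi{P_\omega})^G$ agrees with $\omega$ on $A$ by construction; since every $V\in\Loc^G R$ is a retract, inside $\Loc^G R$, of a cofree comodule $\underline V\otimes A$ (the coaction $V\arr V\otimes A$ is split by $\id\otimes\epsilon$), additivity and monoidality propagate this to a natural monoidal isomorphism $\omega\simeq\omega_{P_\omega}$. For full faithfulness of $\Delta$ (which also gives $\Psi\Delta\simeq\id$), I would compare the fppf sheaves $\Isosh(P,P')$ and $\Isosh^\otimes(\omega_P,\omega_{P'})$ on $T$; as both are fppf sheaves and the comparison is a map of sheaves, I may work fppf-locally and reduce to $P=P'$ trivial, where the claim becomes the classical reconstruction $\Autsh^\otimes(\omega_0)\simeq G$ of the group from tensor-automorphisms of the forgetful fibre functor. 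This identifies the two sheaves and finishes the proof.
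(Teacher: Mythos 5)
The paper does not actually prove this statement: it is quoted with citations to Deligne and Sch\"appi, so there is no internal proof to compare yours with. The closest in-text material is the proof of the first sentence of Theorem A, where the analogous identification is obtained by showing that $\Omega^{A[G]}$ is the forgetful functor (via $\Hom^{G}(\duale{R[G]},M)\simeq M$) and then arguing fppf-locally on the base, not by retracting objects off cofree comodules. Your overall architecture --- reconstruct the torsor as $\Specsh_{T}\,\omega(R[G])$, obtain the action from comultiplication, and obtain the torsor condition from the Hopf--Galois shear isomorphism --- is the standard and correct route to this theorem.

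There is, however, a concrete gap at the step establishing $\omega\simeq\omega_{P_{\omega}}$. You claim that every $V\in\Loc^{G}R$ is a retract \emph{in $\Loc^{G}R$} of the cofree comodule $\underline{V}\otimes A$, split by $\id\otimes\varepsilon$. The coaction $\rho\colon V\arr\underline{V}\otimes A$ is indeed a morphism of comodules and $(\id\otimes\varepsilon)\circ\rho=\id_{V}$, but $\id\otimes\varepsilon$ is \emph{not} a morphism of comodules: the counit is not equivariant for the regular coaction. Worse, the conclusion itself fails for general finite flat $G$: for $G=\Z/p\Z$ over $\F_{p}$ the regular representation is an indecomposable module over the group algebra, so the trivial representation is not a direct summand of $A$; such $G$ are squarely within the scope of the theorem (the retract statement is essentially linear reductivity). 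The repair is to use the exactness hypothesis instead of a splitting: $\rho$ embeds $V$ into $\underline{V}\otimes A$ with cokernel again in $\Loc^{G}R$ (the sequence is $R$-linearly split by $\id\otimes\varepsilon$, so the cokernel is projective), hence every object admits a copresentation by cofree comodules, and two exact functors agreeing naturally on cofree comodules agree everywhere; alternatively, check the isomorphism after the fppf base change $P_{\omega}\arr T$ that trivializes everything, as the paper does for Theorem A. Two smaller issues of the same flavour: your justification that $s^{*}$ is exact (``short exact sequences split because the objects are $R$-projective'') again conflates $R$-linear with equivariant splittings --- the correct reason is that the third term is flat, so the sequence is universally exact; and ``strong monoidal functors preserve categorical rank'' only determines $\rk\omega(A)$ modulo the characteristic, so to get $\rk\omega(A)=\rk R[G]$ you should combine the shear isomorphism, which gives $(\rk\omega(A))^{2}=\rk R[G]\cdot\rk\omega(A)$ pointwise, with the positivity of $\rk\omega(A)$, obtained by applying exactness to $0\arr R\arr A\arr A/R\arr0$.
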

Since a $G$-cover is a ``weak'' version of a $G$-torsor it is
natural to look at a ``weak'' version of a strong monoidal functor,
that is, as the words suggest, a (lax) monoidal functor. This idea
has motivated the study in \cite{Tonini2014} of more general monoidal
(and non) functors and this paper is an application of it. We introduce
the stack $\Lex_{R}^{G}$ ($\Lex_{R,\text{reg}}^{G}$) over $R$ whose
fiber over an $R$-scheme $T$ is the groupoid of $R$-linear, exact
monoidal functors $\Gamma\colon\Loc^{G}R\arr\Loc T$ (such that $\rk\Gamma_{V}=\rk V$
(pointwise) for all $V\in\Loc^{G}R$). We also denote by $\LAlg_{R}^{G}$
the stack over $R$ whose fiber over an $R$-scheme $T$ is the groupoid
of locally free sheaves of algebras on $T$ with an action of $G$,
or, alternatively, the stack of covers with an action of $G$. The
stack $\LAlg_{R}^{G}$ is algebraic and locally of finite presentation
over $R$ and $\GCov$ is an open substack of $\LAlg_{R}^{G}$ (see
\ref{prop:LAlg of finite presentation}). 

Recall that $G$ is linearly reductive over $R$ if the functor of
invariants $(-)^{G}\colon\QCoh\Bi_{R}G\arr\QCoh R$ is exact. We say
that $G$ has a \emph{good} representation theory over $R$ if it
is linearly reductive and there exists a finite collection $I_{G}$
of sheaves in $\Loc^{G}R$ such that for all geometric points (one
is enough if $\Spec R$ is connected) $\Spec k\arr\Spec R$ the map
$(-\otimes_{R}k)\colon I_{G}\arr\Loc^{G}k$ is a bijection onto a
collection of representatives of the irreducible representations of
$G\times_{R}k$. Examples of groups with a good representation theory
are diagonalizable groups and linearly reductive groups over algebraically
closed fields. In general we show that any linearly reductive group
$G$ over $R$ has fppf locally (étale locally if $G/R$ is étale)
a good representation theory (see \ref{prop:G has locally a good representation theory}).

\begin{thma}\label{A} The map of stacks 
\[
\widetilde{\Delta}\colon\GCov\arr\Lex_{R}^{G}\comma(X\arrdi fT)\longmapsto(f_{*}\odi X\otimes-)^{G}
\]
is an open immersion, it extends the equivalence $\Delta\colon\Bi_{R}G\arr\SMon_{R}^{G}$
and takes values in $\Lex_{R,\text{reg}}^{G}$. 

If $G$ is linearly reductive over $R$ then $\widetilde{\Delta}$
extends to an equivalence $\widetilde{\Delta}\colon\LAlg_{R}^{G}\arr\Lex_{R}^{G}$,
namely $\widetilde{\Delta}(\alA)=(\alA\otimes-)^{G}$, the stack $\GCov$
is an open and closed substack of $\LAlg_{R}^{G}$ and, if $G$ has
a good representation theory, then $\widetilde{\Delta}(\GCov)=\Lex_{R,\text{reg}}^{G}$.
\end{thma}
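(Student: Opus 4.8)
The plan is to produce an explicit quasi-inverse to $\widetilde{\Delta}$ built from the regular representation and then to cut out its image by rank (fibre-type) conditions. Write $\odi G\in\Loc^G R$ for the regular representation, carrying its two commuting actions: right translation, which is the comodule structure that a functor $\Gamma\colon\Loc^G R\to\Loc T$ sees, and left translation, whose elements are endomorphisms of $\odi G$ in $\Loc^G R$ and therefore, by functoriality, endow $\Gamma(\odi G)$ with a residual $G$-action. Since $\odi G$ is moreover a commutative algebra object in $\Loc^G R$, any exact monoidal $\Gamma$ sends it to a commutative algebra in $\Loc T$, which is locally free of finite rank because $\odi G\in\Loc^G R$ and $\Gamma$ takes values in $\Loc T$; together with the residual action this defines $\Psi(\Gamma):=\Gamma(\odi G)\in\LAlg_R^G$. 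The candidate quasi-inverse is $\Psi$, and the core of the proof is two natural isomorphisms $\Psi\circ\widetilde{\Delta}\cong\id$ and $\widetilde{\Delta}\circ\Psi\cong\id$ that are compatible with the algebra and $G$-action structures.

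First I would settle well-definedness on $\GCov$, the regularity of the values, and the compatibility with $\Delta$. For $\alA\in\LAlg_R^G$ the multiplication and unit of $\alA$ make $(\alA\otimes-)^G$ an $R$-linear lax monoidal functor, the unit being the algebra unit $R\to\alA^G$. When $\alA$ is of regular-representation type, i.e.\ fppf-locally $\alA\cong\odi G$ as a comodule --- which is exactly the defining condition of a $G$-cover --- the shearing isomorphism $\odi G\otimes V\cong V_{0}\otimes\odi G$ (with $V_{0}$ the underlying module of $V$ carrying the trivial action) gives $(\alA\otimes V)^G\cong V_{0}$ locally. Hence the functor is automatically exact, lands in $\Loc T$, and satisfies $\rk(\alA\otimes V)^G=\rk V$, so $\widetilde{\Delta}$ is well defined on $\GCov$ and takes values in $\Lex_{R,\text{reg}}^G$; for the trivial torsor the same computation yields the strong monoidal forgetful functor $s^{*}$, and fppf descent then identifies $\widetilde{\Delta}_{|\Bi_R G}$ with $\Delta$. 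Crucially, on this regular locus the reconstruction results of \cite{Tonini2014} apply \emph{without} any reductivity hypothesis, precisely because $\odi G$ is a cofree comodule and invariants against it are exact; this upgrades $\widetilde{\Delta}\colon\GCov\to\Lex_{R,\text{reg}}^G$ to an equivalence with quasi-inverse $\Psi$.

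Next I would deduce the open immersion and the linearly reductive case. The condition $\rk\Gamma_V=\rk V$ compares two locally constant functions on $T$ and hence cuts out an open and closed locus; since regularity is detected on a finite tensor-generating family of comodules, $\Lex_{R,\text{reg}}^G\hookrightarrow\Lex_R^G$ is an open immersion, and composing with the equivalence of the previous paragraph shows $\widetilde{\Delta}\colon\GCov\to\Lex_R^G$ is an open immersion. Now assume $G$ linearly reductive. Exactness of $(-)^G$ makes $(\alA\otimes-)^G$ exact, with values direct summands of locally free sheaves and hence in $\Loc T$, for \emph{every} $\alA\in\LAlg_R^G$; the reconstruction machinery of \cite{Tonini2014} then yields that $\Psi$ is a global quasi-inverse, so $\widetilde{\Delta}\colon\LAlg_R^G\to\Lex_R^G$ is an equivalence. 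Under this equivalence $\GCov$ is the preimage of $\Lex_{R,\text{reg}}^G$, which as just noted is open and closed, whence $\GCov$ is open and closed in $\LAlg_R^G$.

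Finally, the good-representation-theory statement asserts that on the regular sublocus the two descriptions coincide, $\widetilde{\Delta}(\GCov)=\Lex_{R,\text{reg}}^G$; via the equivalence this says that $\alA$ has regular-representation type exactly when $(\alA\otimes-)^G$ is regular. One implication is the rank computation above. For the converse I would argue fibrewise over the geometric points $\Spec k\to\Spec R$: good representation theory decomposes $\alA\otimes_R k\cong\bigoplus_{i\in I_G}m_i V_i$ into irreducibles, and $\rk(\alA\otimes V_j)^G=\sum_i m_i\dim\Hom_{G_k}(V_i^\vee,V_j)$, so the regularity conditions $\rk(\alA\otimes V_j)^G=\dim V_j$ for all $j$ force $m_i=\dim V_i$, which is exactly the multiplicity pattern of $\odi G$. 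Thus $\alA$ is fibrewise, hence fppf-locally, isomorphic to the regular representation and defines a $G$-cover, giving $\widetilde{\Delta}(\GCov)=\Lex_{R,\text{reg}}^G$. The main obstacle is the reconstruction isomorphism $\widetilde{\Delta}\circ\Psi\cong\id$, namely the natural identification $(\Gamma(\odi G)\otimes V)^G\cong\Gamma(V)$ for a merely \emph{lax} monoidal $\Gamma$: with no strong monoidal structure to invert, one must exploit the cofreeness of $\odi G$ and the shearing isomorphism (and, off the regular locus, linear reductivity) to move invariants past $\Gamma$, and then check compatibility with the algebra and residual $G$-action. I expect this step to be supplied by the general monoidal-functor formalism developed in \cite{Tonini2014}.
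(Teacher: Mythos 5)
Your overall architecture --- reconstruct the algebra by evaluating $\Gamma$ at the regular representation, identify $\widetilde{\Delta}$ with the reconstruction equivalence of \cite{Tonini2014}, and cut out the image by rank conditions --- is also the paper's, and your first paragraph (well-definedness, values in $\Lex_{R,\text{reg}}^{G}$, compatibility with $\Delta$ via the shearing/counit isomorphism) is correct. But the second and third paragraphs contain a genuine error. You assert that $\widetilde{\Delta}\colon\GCov\arr\Lex_{R,\text{reg}}^{G}$ is an equivalence with no hypothesis on $G$, and you then derive both the open immersion (by composing with an alleged open immersion $\Lex_{R,\text{reg}}^{G}\hookrightarrow\Lex_{R}^{G}$) and the open-and-closedness of $\GCov$ in $\LAlg_{R}^{G}$ (as the preimage of $\Lex_{R,\text{reg}}^{G}$) from this. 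Essential surjectivity onto $\Lex_{R,\text{reg}}^{G}$ is false in general: the paper's Example \ref{rem: counterexample for gcovers and monoidal} ($G=\Z/3\Z$ over $\Q$, $\alA=\overline{\Q}[x,y]/(x,y)^{2}$) produces $\Gamma=\Omega^{\alA}\in\Lex_{\Q,\text{reg}}^{G}(\overline{\Q})$ that is not in the essential image of $\widetilde{\Delta}$ on $\GCov$. The point is that the condition $\rk\Gamma_{V}=\rk V$ is imposed only for $V\in\Loc^{G}R$, i.e.\ for representations defined over the base, and it does not control the multiplicities of the geometrically irreducible constituents of $\Gamma(\odi G)$ when an irreducible over $R$ splits after base change. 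Your own fibrewise multiplicity argument in the last paragraph implicitly concedes this, since it invokes good representation theory --- a hypothesis absent exactly where you claim the equivalence. Relatedly, your assertion that regularity ``is detected on a finite tensor-generating family'' is unjustified: $\Gamma$ is only lax monoidal, so ranks are not multiplicative under tensor product, and without a good representation theory $\Lex_{R,\text{reg}}^{G}$ is an intersection of infinitely many clopen conditions, hence closed but not visibly open.

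The paper circumvents both problems. For the open immersion it identifies the image of $\widetilde{\Delta}$ not with $\Lex_{R,\text{reg}}^{G}$ but with the locus of those $\Gamma$ whose reconstructed algebra $\Gamma_{R[G]}$ is a $G$-cover, and proves separately (Proposition \ref{prop:LAlg of finite presentation}) that being a $G$-cover is an open condition on $\LAlg_{R}^{G}$; this needs a genuinely geometric step --- spreading out a comodule isomorphism $\overline{k(p)}[G]\simeq\alA\otimes\overline{k(p)}$ to an fppf neighbourhood via $\Hom^{G}(\duale{R[G]},M)\simeq M$ --- for which your proposal offers no substitute. For the open-and-closedness of $\GCov$ in $\LAlg_{R}^{G}$ it first passes to an fppf cover over which $G$ acquires a good representation theory (Proposition \ref{prop:G has locally a good representation theory}); only there does $\GCov$ become the preimage of $\Lex_{R,\text{reg}}^{G}$, now cut out by finitely many clopen rank conditions indexed by $I_{G}$. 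You need to either adopt these two steps or supply replacements; as written, the proposal proves a statement (the unconditional equivalence $\GCov\simeq\Lex_{R,\text{reg}}^{G}$) that the paper explicitly shows to be false.
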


The equality $\widetilde{\Delta}(\GCov)=\Lex_{R,\text{reg}}^{G}$
is not true in general, even when $G$ is linearly reductive (see
\ref{rem: counterexample for gcovers and monoidal}).

We are going to show two applications of the above point of view.
The first one is about the geometry of $\GCov$ (see also \ref{thm:GCov reducible when G not abelian}).

\begin{thmb}\label{B} If $G$ is a finite, flat and finitely presented
nonabelian linearly reductive group scheme over $R$ then the stack
$\GCov$ is reducible.

\end{thmb}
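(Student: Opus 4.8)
The plan is to reduce to a geometric point and then carry out a local dimension count. Reducibility may be checked after base change along a geometric point $\Spec k\arr\Spec R$ with $k$ algebraically closed, and over such a $k$ the group $G$ is finite, nonabelian and linearly reductive, hence has a good representation theory. By Theorem~\ref{A} we may therefore identify $\GCov_{k}$ with $\Lex_{k,\text{reg}}^{G}$ and describe a $G$-cover concretely: it is a commutative, associative, unital $G$-equivariant algebra structure $\mu$ on the regular representation $\alA=k[G]=k\oplus W$, where $W$ is the augmentation part. Fixing this comodule type, the stack $\GCov_{k}$ is, near any cover, a quotient $[X/\Gamma]$, where $X$ is the affine variety of such multiplications $\mu$ inside $\Hom_{G}(\Sym^{2}W,\alA)$ and $\Gamma=\Aut_{G}(W)=\prod_{\rho\neq\mathbf 1}\GL_{\dim\rho}(k)$ acts by transport of structure.

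First I would pin down the torsor component. The torsor locus $\Bi_{k}G\subset\GCov_{k}$ is open and nonempty, has a one-point underlying space, and, since $G$ is finite, has dimension $0$; its closure is therefore a single irreducible component $C$ with $\dim C=0$ (inside $X$ it is the closure of the $\Gamma$-orbit of the split torsor, of dimension $\dim\Gamma=|G|-1$). Consequently it suffices to produce one irreducible closed substack of $\GCov_{k}$ of positive dimension, equivalently a positive-dimensional family of pairwise non-isomorphic $G$-covers: any such family lies in a component of dimension $\geq 1\neq\dim C$, forcing reducibility.

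To build the family I would work at the maximally degenerate cover $\alA_{0}=k\oplus W$ with $W^{2}=0$. The central scaling $\Gm\hookrightarrow\Gamma$ acting on $W$ degenerates every multiplication to $\mu_{0}$ in a suitable limit, so $\alA_{0}$ lies in the closure of every $G$-cover; in particular it lies on $C$ and on every other component, and $\GCov_{k}$ is connected. The task is thus to show that the local dimension of $X$ at $\mu_{0}$ strictly exceeds $\dim\Gamma$, i.e. that $\GCov_{k}$ has positive dimension at $\alA_{0}$. The nonabelian hypothesis enters exactly here: $G$ is nonabelian iff some irreducible $U$ has $d:=\dim U\geq 2$, and then $U$ occurs in $k[G]$ with multiplicity $d$, so its multiplicity space is $k^{d}$ with $d\geq 2$ and the corresponding automorphism factor $\GL_{d}$ is itself nonabelian. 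This multiplicity space is expected to carry a positive-dimensional family of equivariant multiplication data that survives the quotient by $\GL_{d}$, whereas for abelian $G$ every irreducible has multiplicity $1$ and the analogous spaces are rigid — consistent with the known irreducibility of $\GCov$ in the diagonalizable case.

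The hard part is that the tangent space alone does not detect this. A short character computation gives $T_{\mu_{0}}X=\Hom_{G}(\Sym^{2}W,W)$ (linearized associativity kills the trivial-isotypic component once $\dim W\geq 2$), and this already exceeds $\dim\Gamma$ even for some abelian groups — for instance for $G=\Z/4$ one finds $\dim T_{\mu_{0}}X=4>3=\dim\Gamma$ although $\GCov$ is irreducible there. Hence the obstructions must be controlled: the crux of the proof is an equivariant deformation computation — via the André--Quillen/Harrison cohomology of the square-zero algebra $\alA_{0}$, or via the monoidal deformation theory of \cite{Tonini2014} — establishing that the \emph{actual} local dimension $\dim_{\mu_{0}}X$, and not merely the tangent space, exceeds $\dim\Gamma$ precisely when $d\geq 2$. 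The remaining, more routine, step is to descend the resulting reducibility from the geometric fibre back to the stack $\GCov$ over $R$.
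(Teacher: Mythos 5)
Your proposal has the right setup (reduction to an algebraically closed field, identification of $\GCov$ with $\Lex_{k,\text{reg}}^{G}$ via Theorem~\ref{A}, and the observation that the closure of $\Bi_{k}G$ is a $0$-dimensional irreducible component, so that one positive-dimensional irreducible closed substack would force reducibility), but the decisive step is missing. You reduce everything to the claim that the local dimension of the variety $X$ of multiplications at the square-zero point $\mu_{0}$ strictly exceeds $\dim\Gamma$ exactly when $G$ is nonabelian, and then you do not prove it: you only say that ``an equivariant deformation computation'' should establish it. Worse, your own remark about $G=\Z/4\Z$ shows that the first-order version of this argument cannot work --- the tangent space $\Hom_{G}(\Sym^{2}W,W)$ already exceeds $\dim\Gamma$ for some abelian groups with irreducible $\GCov$ --- so the entire weight of the proof rests on an obstruction analysis that is absent. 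Until that computation is actually carried out (and it is not clear a priori that it comes out the way you want, since the quotient by the nonabelian factor $\GL_{d}$ could in principle eat up all the extra dimensions), there is no proof. This is a genuine gap, not a routine omission.

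For comparison, the paper avoids dimension counts entirely and instead exhibits an explicit $G$-cover lying outside the main component $\stZ_{G}$. The tools are: (i) the induction functor $\ind_{H}^{G}\colon\LAlg_{R}^{H}\arr\LAlg_{R}^{G}$ is \'etale (\ref{thm:ind is etale}), whence $\ind_{H}^{G}\alB\in\stZ_{G}\iff\alB\in\stZ_{H}$ (\ref{prop:ind B in Z if B in Z}); (ii) a group-theoretic reduction, via Miller's theorem that a group with all proper subgroups abelian is solvable, to the case of an extension $1\arr H\arr G\arr\Z/p\Z\arr1$ with $H$ diagonalizable; (iii) the construction of a square-zero $H$-algebra $B=k\oplus F$ whose $H$-isotypic ranks are deliberately \emph{wrong} (so $B\notin\HCov\supseteq\stZ_{H}$) but whose induction $A=\ind_{H}^{G}B$ has the correct $G$-isotypic ranks, using a rank function built from the $\Z/p\Z$-orbits on the character group of $H$; nonabelianness guarantees a character with a full orbit, which is exactly what creates the mismatch. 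Then $A\in\GCov\setminus\stZ_{G}$ by \ref{lem:induction is restriction} and Theorem~\ref{A}, and reducibility follows with no deformation theory at all. If you want to salvage your approach, you would need to replace the asserted local dimension estimate by an actual construction of a positive-dimensional family of non-isomorphic covers; the paper's induced square-zero algebras are a natural place to look for one, but that is essentially a different proof.
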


When $G$ is a diagonalizable group the same result holds except for
a few cases when $G$ has low rank (see \cite[Corollary 4.17]{Tonini2011}).
Thus the bad behaviour of the moduli $\GCov$ is still present in
the nonabelian setting. Note that the proof of Theorem \ref{B} does
not use and cannot be adapted to show the reducibility of $\GCov$
when $G$ is a diagonalizable group. Moreover it requires the study
of more general monoidal functors than the ones present in $\Lex_{R,\text{reg}}^{G}$.
Theorem \ref{B} already appeared in my Ph.D. thesis \cite{Tonini2013},
but the proof we present here is slightly different and relies on
the following fact: if $H$ is an open and closed subgroup scheme
of $G$ the functor
\[
\ind_{H}^{G}\colon\LAlg_{R}^{H}\arr\LAlg_{R}^{G}\comma\alA\longmapsto(\alA\otimes R[G])^{H}
\]
is well defined, quasi-affine and étale (see \ref{thm:ind is etale}).

The second application is a characterization of $G$-covers of regular
in codimension $1$ schemes. Let us introduce some notation and definitions
in order to explain the result. Let $f\colon X\arr T$ be a cover
with an action of $G$ on $X$. We denote by $\tr_{f}\colon f_{*}\odi X\arr\odi T$
the trace map, by $\widetilde{\tr}_{f}\colon f_{*}\odi X\arr\duale{(f_{*}\odi X)}$
the map $x\longmapsto\tr_{f}(x\cdot-)$ and by $s_{f}\in(\det f_{*}\odi X)^{-2}$
the discriminant section, that is the section obtained by $\det\widetilde{\tr}_{f}$.
If $f$ is a $G$-cover with associated monoidal functor $\Omega^{f}=(f_{*}\odi X\otimes-)^{G}\colon\Loc^{G}R\arr\Loc T$
and $V\in\Loc^{G}R$ consider
\[
\Omega_{V}^{f}\otimes\Omega_{\duale V}^{f}\arr\Omega_{V\otimes\duale V}^{f}\arr\Omega_{R}^{f}=(f_{*}\odi X)^{G}=\odi T
\]
where the first map is given by monoidality, while the second is induced
by the evaluation $V\otimes\duale V\arr R$. The morphism above yields
a map $\xi_{f,V}\colon\Omega_{\duale V}^{f}\arr\duale{(\Omega_{V}^{f})}$
of locally free sheaves whose rank coincides with $\rk V$ by Theorem
\ref{A}. Applying the determinant we obtain a section $s_{f,V}\in(\det\Omega_{V}^{f}\otimes\det\Omega_{\duale V}^{f})^{-1}$.
If $q\in T$ is a point and $V\in\Loc^{G}T$ we denote by $\rk_{q}V$
the rank of $V\otimes\odi{T,q}$ and by $\rk_{q}G$ the rank of $G$
over $q$, that is $\rk_{q}\odi T[G]$. The result we will prove is
the following.

\begin{thmc}\label{C} Let $G$ be a finite and étale group scheme
over $R$. Let also $Y$ be an integral and Noetherian $R$-scheme
with $\dim Y\geq1$, and $f\colon X\arr Y$ be a cover with an action
of $G$ on $X$ over $Y$ and such that $X/G=Y$. Let also $q\in Y$
be a codimension $1$ and regular point. Then the following are equivalent:
\begin{enumerate}
\item all points of $X$ over $q$ are regular, tame (the ramification index
is coprime with $\car k(q)$) and have separable residue fields. 
\item we have $v_{q}(s_{f})<\rk f$, where $v_{q}$ denotes the valuation
in $q$;
\item there exist an étale neighborhood $U\arr Y$ with a point $q'$ mapping
to $q$ and with $G\times U$ constant, subgroups $T\vartriangleleft H<G\times U$
with $H/T$ cyclic of order coprime with $\car k(q)$ and $\Spec\alB\in(H/T)\textup{-Cov}(U)$
such that $X\times_{Y}U=\Spec(\ind_{H}^{G}\alB)$, $\alB_{q'}$ is
a regular local ring, $H$ is the geometric stabilizer of a codimension
$1$ point of $X$ over $q$, $T$ is the geometric stabilizer of
a generic point of $X$ and $\Spec\alB$ is generically an $(H/T)$-torsor.
\end{enumerate}
If one of the above conditions is satisfied we have that: $f$ is
generically a $G$-torsor if and only if $\rk f=\rk G$ and in this
case the geometric stabilizers of the codimension $1$ points of $X$
over $q$ are linearly reductive and cyclic and there exists an open
subset $V\subseteq Y$ containing $q$ and such that $f_{|f^{-1}(V)}\colon f^{-1}(V)\arr V$
is a $G$-cover; if $G$ is constant, $G\arr\Aut X$ is injective
and the generic fiber of $f\colon X\arr Y$ is connected then $\rk f=\rk G$.

If $G$ is linearly reductive and $\rk f=\rk G$ then the above conditions
are equivalent to
\begin{enumerate}
\item [4)] $f\in\GCov$ and for all $V\in\Rep^{G}R$ (resp. $V\in I_{G}$
if $G$ is good) we have $v_{q}(s_{f,V})\leq\rk_{q}(V/V^{G})$;
\item [5)] $f\in\GCov$ and for all $V\in\Rep^{G}R$ (resp. $V\in I_{G}$
if $G$ is good) we have that $\Coker(\xi_{f,V})\otimes\odi{Y,q}$
is defined over $k(q)$, that is $m_{q}(\Coker(\xi_{f,V})\otimes\odi{Y,q})=0$
where $m_{q}$ denotes the maximal ideal of $\odi{Y,q}$.
\end{enumerate}
In this case $f\in\stZ_{G}(Y)$, where $\stZ_{G}$ denotes the schematic
closure of $\Bi G$ inside $\GCov$ (see \ref{def:schematic closure}).

\end{thmc}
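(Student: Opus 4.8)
The plan is to localise the whole statement at $q$. Since $q$ is a regular codimension $1$ point of the integral scheme $Y$, the ring $R'=\odi{Y,q}$ is a discrete valuation ring with uniformiser $\pi$, fraction field $K$ and residue field $k(q)$; put $A=(f_{*}\odi X)_{q}$, a finite free $R'$-algebra of rank $n=\rk f$ with $A^{G}=R'$. After a local trivialisation $s_{f}$ becomes the discriminant of the trace form of $A/R'$, so $v_{q}(s_{f})=v_{q}(\textup{disc}(A/R'))$; in particular $v_{q}(s_{f})<\infty$ already forces $A\otimes_{R'}K$ to be étale over $K$. The decisive simplification is that $G$ acts transitively on the points of $X$ over $q$: indeed $(A\otimes_{R'}K)^{G}=K$ has no nontrivial idempotents, so $G$ permutes the factors of $A\otimes_{R'}K$ transitively. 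I will therefore pass to the strict henselisation $R'^{\textup{sh}}$ --- which alters neither $v_{q}$, nor the rank, nor the regularity or tameness of the points, and over which $G$ is constant --- and use transitivity and henselianness to split $A\otimes_{R'}R'^{\textup{sh}}\cong\ind_{H}^{G}B\cong B^{[G:H]}$, where $H$ is the decomposition group of a chosen point $x$ over $q$ and $B$ is the local factor at $x$, with $B^{H}=R'^{\textup{sh}}$. Then $v_{q}(s_{f})=[G:H]\,v_{q}(\textup{disc}(B/R'^{\textup{sh}}))$ and $n=[G:H]\rk B$, so condition (2) becomes the purely local inequality $v_{q}(\textup{disc}(B))<\rk B$.

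The core local lemma is then: for a local finite free algebra $B$ over a strictly henselian DVR with generically separable fibre, $v_{q}(\textup{disc}(B))<\rk B$ holds if and only if $B$ is regular (hence a DVR) and tamely ramified with separable residue field --- which is exactly condition (1) at $x$. I will prove it from $v_{q}(\textup{disc}(B))=\sum_{j}f_{j}d_{j}+2\ell$, the sum running over the branches of $B$ with different exponents $d_{j}$ and residue degrees $f_{j}$ and $\ell$ the length of the non-normal locus, together with $d_{j}\geq e_{j}-1$ (equality iff tame with separable residue, otherwise $d_{j}\geq e_{j}$) and $\ell\geq b-1$ for $b$ branches. A short case analysis shows that any wild or inseparable branch, or $b\geq 2$, or $\ell>0$, forces $v_{q}(\textup{disc}(B))\geq\rk B$, while the regular tame case gives exactly $\rk B-1<\rk B$. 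This establishes (1)$\Leftrightarrow$(2).

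For (1)$\Leftrightarrow$(3) I will unwind the local piece $B$ by ramification theory over $R'^{\textup{sh}}$. When $B$ is a tamely ramified DVR its fraction field is a tame extension of $\textup{Frac}(R'^{\textup{sh}})$, so the inertia is cyclic of order prime to $\car k(q)$: writing $T\vartriangleleft H$ for the subgroup acting trivially on $B$ (the stabiliser of the generic point) and $H/T$ for the induced faithful action, Abhyankar's lemma presents $B$ as a cyclic Kummer extension $R'^{\textup{sh}}[t]/(t^{m}-u\pi)$ with $m=|H/T|$ prime to $\car k(q)$, generically an $(H/T)$-torsor. Descending this finite datum from $R'^{\textup{sh}}$, the filtered colimit of étale neighbourhoods of $q$, to an actual étale neighbourhood $U\arr Y$ yields $T\vartriangleleft H<G\times U$, the regular cyclic cover $\Spec\alB$ with $\alB_{q'}$ regular, and the identification $X\times_{Y}U=\Spec(\ind_{H}^{G}\alB)$ of (3), with $H$ the decomposition group and $T$ the generic stabiliser; the converse (3)$\Rightarrow$(1) is immediate since $\ind_{H}^{G}$ of such a cover has regular local rings, separable residue fields and ramification indices dividing $m$. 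The supplementary assertions then drop out: the computation gives $n=\rk f=|G|/|T|$, so $f$ is generically a $G$-torsor exactly when $T=1$, i.e. when $\rk f=\rk G$; in that case $H=H/T$ is cyclic of order prime to $\car k(q)$, hence linearly reductive and the geometric stabiliser of the codimension $1$ points. As (2) and membership in $\GCov$ are open (Theorem~\ref{A}) they spread out from $q$ to an open $V\subseteq Y$ on which $f$ is a $G$-cover; and if $G$ is constant, $G\arr\Aut X$ injective and the generic fibre connected, it is $\Spec L$ with $L^{G}=K$ and $G\hookrightarrow\Aut(L/K)$, so $L/K$ is Galois with group $G$ and $\rk f=[L:K]=\rk G$.

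Finally assume $G$ linearly reductive and $\rk f=\rk G$. By linear reductivity $f_{*}\odi X=\bigoplus_{V\in I_{G}}\Omega_{V}^{f}\otimes V$ with $\rk\Omega_{V}^{f}=\rk V$ (Theorem~\ref{A}), and the trace form respects this splitting: its $V$-block is, up to a unit, $\xi_{f,V}$ contracted with the evaluation $V\otimes\duale V\arr R$, whence the valuation identity $v_{q}(s_{f})=\sum_{V\in I_{G}}(\rk V)\,v_{q}(s_{f,V})$ with $s_{f,V}=\det\xi_{f,V}$. Over $\odi{Y,q}$ one has $v_{q}(s_{f,V})=\ell(\Coker\xi_{f,V})$, and, $\xi_{f,V}$ being a map of free modules of rank $\rk V$, its cokernel is defined over $k(q)$ (killed by $m_{q}$) exactly when all its elementary divisors divide $\pi$, in which case $v_{q}(s_{f,V})=\dim_{k(q)}\Coker\xi_{f,V}\leq\rk V=\rk_{q}(V/V^{G})$. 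To identify (4) and (5) with (1) I will compute $\Omega_{V}^{f}$ and $\xi_{f,V}$ in the induced Kummer model, where the $V$-block is diagonalised by the cyclic grading and $\Coker\xi_{f,V}$ is visibly annihilated by $m_{q}$ with $v_{q}(s_{f,V})$ equal to the number of nontrivial $H/T$-weights of $V$; this simultaneously yields (1)$\Leftrightarrow$(4)$\Leftrightarrow$(5). Since $f\in\GCov$ is then generically a $G$-torsor and $Y$ is integral, the generic point maps into $\Bi G$, so the schematic image of $f$ lies in the schematic closure $\stZ_{G}$, giving $f\in\stZ_{G}(Y)$. I expect the \textbf{main obstacle} to be precisely this representation-by-representation analysis: proving the valuation identity and computing each $\Coker\xi_{f,V}$ in the local tame model sharply enough to certify both the bound of (4) and the ``defined over $k(q)$'' criterion of (5), while keeping the bookkeeping of different exponents, gluing lengths and the functor $\ind_{H}^{G}$ compatible under the $G$-action and descent from the strict henselisation.
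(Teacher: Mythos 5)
Your proposal is correct in outline and its skeleton matches the paper's: localise at $q$, pass to the strict henselisation, use transitivity of $G$ on the fibre together with $\ind_{H}^{G}$ to reduce to a connected local cover $B$ of a DVR, settle $1)\Leftrightarrow 2)$ by a local discriminant criterion, obtain $3)$ from cyclicity of tame inertia, and handle $4)$, $5)$ via the isotypic decomposition of the trace form (the paper's Lemma \ref{lem:trace for ring with group action}, your ``valuation identity''). The genuine divergence lies in two steps. First, the paper does not prove the local criterion ``$v_{q}(\textup{disc}(B))<\rk B$ iff $B$ is regular, tame with separable residue field'': it cites it as the Main Theorem of \cite{Tonini2015}. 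You propose to re-derive it from $v(\textup{disc})=\sum_{j}f_{j}d_{j}+2\ell$ with $d_{j}\geq e_{j}-1$ and $\ell\geq b-1$; this is more self-contained, but it is precisely the content of the cited paper, and the delicate case is a separably closed but imperfect residue field, where you must justify the strict improvement $f_{j}d_{j}\geq e_{j}f_{j}$ for branches with inseparable residue extension. Second, for $1)\Rightarrow 3)$ the paper avoids Abhyankar/Kummer presentations: it shows the faithful quotient $\overline{G}=H/T$ injects into $\Aut(p/p^{2})\simeq k(p)^{*}$, hence is cyclic of order prime to $\car k(q)$ and linearly reductive, and then uses closedness of $\overline{G}\textup{-Cov}$ in $\LAlg_{R}^{\overline{G}}$ (Theorem \ref{A}) together with the generic torsor property to conclude $\alB\in\overline{G}\textup{-Cov}$; your explicit uniformisation $R[t]/(t^{m}-u\pi)$ buys the concrete computation of $\Coker(\xi_{f,V})$ you want for $5)$, at the cost of a descent step from the strict henselisation that the paper does not need.

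Two small points need repair. Your justification of transitivity (absence of idempotents in $(A\otimes K)^{G}=K$) only gives transitivity on the generic fibre; what is needed is transitivity of $G$ on the closed points of $\Spec(A\otimes\overline{k(q)})$, which follows from $A^{G}=R'$ local by the classical lying-over argument (this is the paper's \ref{cor:stabilizers and ind}). And $\rk_{q}(V/V^{G})$ equals $\rk V-\rk V^{G}$, not $\rk V$; the bound in $4)$ survives because $\xi_{f,R}$ is an isomorphism, so the trivial isotypic piece contributes $0$, and only then does the summation $v_{q}(s_{f})=\sum_{V}\rk V\cdot v_{q}(s_{f,V})\leq\sum_{R\neq V}(\rk V)^{2}=\rk G-1<\rk f$ close the implication $4)\Rightarrow 2)$ --- which is also the step that upgrades your Kummer-model computation (valid only once $1)$ is assumed) to the full equivalences $4)\Leftrightarrow 1)$ and $5)\Leftrightarrow 1)$.
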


A variant of this result already appeared in my Ph.D. thesis \cite{Tonini2013}
but under stronger hypotheses on the geometric stabilizers in codimension
$1$ (see \cite[Theorem 4.4.7]{Tonini2013}). The proof we present
here is different and relies on \cite{Tonini2015}, where a non-equivariant
analogue of the above theorem is proved.

We now briefly describe the subdivision of the paper. In the first
section we prove Theorem \ref{A}, while in the second we study the
property of induction from an open and closed subgroup. The third
section is dedicated to the proof of Theorem \ref{B} and the fourth
section to the proof of Theorem \ref{C}.

\section*{Notation}

In all the paper we fix a base ring $R$, so that all rings, schemes
and stacks will be defined over $R$.

Consider a scheme $T$ and a finite, flat and finitely presented group
scheme $G$ over $R$. We denote by $\Bi_{R}G$ (or simply $\Bi G)$
the stack over $R$ of $G$-torsors, by $\Loc T$ (resp. $\QCoh T$)
the category of sheaves of $\odi T$-modules that are locally free
of finite rank (resp. quasi-coherent), by $\Loc^{G}T$ (resp. $\QCoh^{G}T$)
the category of sheaves of $\odi T$modules that are locally free
of finite rank (resp. quasi-coherent) together with an action of $G$,
and by $\QAlg^{G}T$ the category of quasi-coherent sheaves of algebras
$\alA$ on $T$ together with an action of $G$. When $T=\Spec A$
we will often replace $T$ by $A$ and write, for instance, $\Loc^{G}A$
instead of $\Loc^{G}(\Spec A)$.

If $\catC$, $\catD$ are $R$-linear monoidal categories with unities
$I$, $J$ and $\Gamma\colon\catC\arr\catD$ is an $R$-linear functor,
a \emph{monoidal structure} on $\Gamma$ consists of a natural transformation
$\iota_{V,W}\colon\Gamma_{V}\otimes\Gamma_{W}\arr\Gamma_{V\otimes W}$
for $V,W\in\catC$ and a morphism $1\colon J\arr\Gamma_{I}$ satisfying
certain compatibility conditions. A monoidal structure in which those
maps are isomorphisms is called \emph{strong.} We refer to \cite[Definition 2.18]{Tonini2014}
for the precise definition.

Given $\shF\in\QCoh^{G}T$ we set $\Omega^{\shF}=(\shF\otimes-)^{G}\colon\Loc^{G}R\arr\QCoh T$,
which is an $R$-linear functor. If $\shF\in\QAlg^{G}T$ then $\Omega^{\shF}$
has a monoidal structure induced by the multiplication and the unity
of $\shF$ (see \cite[Proposition 2.22 and Section 4]{Tonini2014}).

A map $f\colon X\arr T$ of schemes is called a \emph{cover} if it
is affine and $f_{*}\odi{\stX}$ is locally free of finite rank or,
alternatively, if it is finite, flat and finitely presented. Affine
maps into a scheme $T$ will be often thought of as quasi-coherent
sheaves of algebras on $T$, so that covers correspond to locally
free sheaves of algebras of finite rank.

A \emph{geometric point} of a scheme $T$ is a map $\Spec k\arr T$,
where $k$ is an algebraically closed field.

\section*{Acknowledgement}

I would like to thank Angelo Vistoli and Matthieu Romagny for the
useful conversations I had with them and all the suggestions they
gave me.

\section{Galois covers via monoidal functors}

The aim of this section is to prove Theorem \ref{A}. We fix a base
ring $R$ and a finite, flat and finitely presented group scheme $G$
over $R$.

Taking into account \cite[Remark 4.3 and Theorem 4.6]{Tonini2014}
we have the following result.
\begin{thm}
\label{thm:sheaves and linear functors} The functor $\Omega^{*}$
yields an equivalence between $\QCoh^{G}T$ ($\QAlg^{G}T$) and the
category of $R$-linear (monoidal) functors $\Loc^{G}R\arr\QCoh T$
which are left exact on short exact sequences.\end{thm}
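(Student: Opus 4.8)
The plan is to deduce the statement from the general reconstruction result \cite[Theorem 4.6]{Tonini2014} by translating the group-scheme data into comodule data. Since $G$ is finite, flat and finitely presented, the sheaf of functions $R[G]$ is a Hopf algebra that is locally free of finite rank over $R$, and giving a $G$-action on a quasi-coherent sheaf is the same as giving an $R[G]$-comodule structure. Under this dictionary $\Loc^{G}R$ becomes the category of $R[G]$-comodules that are projective of finite rank over $R$, $\QCoh^{G}T$ becomes the category of $R[G]$-comodules in $\QCoh T$, and $\QAlg^{G}T$ the category of comodule algebras; these are precisely the inputs of \cite[Theorem 4.6]{Tonini2014}, and \cite[Remark 4.3]{Tonini2014} records this identification. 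So the theorem should follow once the functor $\Omega^{*}$ is matched with the equivalence produced there.

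First I would check that $\Omega^{\shF}$ has the right target and satisfies the exactness hypothesis. $R$-linearity is clear, and for $V\in\Loc^{G}R$ the sheaf $\shF\otimes_{R}V$ is quasi-coherent with the diagonal $G$-action, so $\Omega^{\shF}_{V}=(\shF\otimes V)^{G}$, being the kernel of a map of quasi-coherent sheaves, is again quasi-coherent. For left exactness I would use that every short exact sequence in $\Loc^{G}R$ is $R$-split, its terms being projective $R$-modules; hence $\shF\otimes_{R}-$ preserves it, and since $(-)^{G}=\Hom_{G}(R,-)$ is left exact, $\Omega^{\shF}$ is left exact on short exact sequences, as required.

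The core of the argument is to identify the equivalence of \cite[Theorem 4.6]{Tonini2014} with $\Omega^{*}$. The key point is the natural isomorphism $\Omega^{\shF}_{V}=(\shF\otimes V)^{G}\simeq\Hom_{G}(\duale V,\shF)$, valid because $V$ is rigid in $\Loc^{G}R$; this exhibits $\Omega^{\shF}$ as the functor attached to $\shF$ by the cited theorem, whose quasi-inverse sends a functor $\Gamma$ to $\Gamma_{R[G]}$ equipped with the $G$-action induced by right translations on the regular representation $R[G]\in\Loc^{G}R$. For the monoidal statement I would invoke the algebra-level version of the same result together with \cite[Proposition 2.22 and Section 4]{Tonini2014}: when $\shF\in\QAlg^{G}T$ the lax monoidal structure on $\Omega^{\shF}$ is the one induced by the multiplication and unit of $\shF$ recalled in the Notation, and conversely a monoidal structure on $\Gamma$ equips $\Gamma_{R[G]}$ with an algebra structure, the two constructions being mutually inverse. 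The hard part will be precisely this last identification --- verifying that the abstract reconstruction functor of \cite{Tonini2014} is naturally isomorphic to $(\shF\otimes-)^{G}$ and that the monoidal and algebra structures correspond under the equivalence --- since once this compatibility is established the statement is a direct consequence of the cited results.
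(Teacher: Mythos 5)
Your proposal is correct and follows essentially the same route as the paper, which proves this theorem simply by citing \cite[Remark 4.3 and Theorem 4.6]{Tonini2014}; your additional verifications (quasi-coherence of $(\shF\otimes V)^{G}$, left exactness via $R$-splitness of sequences of projectives, the identification $\Omega^{\shF}_{V}\simeq\Hom^{G}(\duale V,\shF)$, and the quasi-inverse $\Gamma\longmapsto\Gamma_{R[G]}$) are exactly the content the paper delegates to that reference. No gap.
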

\begin{defn}
\label{def: G cover} A $G$-\emph{cover }of an $R$-scheme $T$ is
a cover $f\colon X\arr T$ together with an action of $G$ on $X$
such that $f$ is invariant and $f_{*}\odi X$ and $R[G]\otimes\odi T$
are fppf locally isomorphic as $G$-comodules (not as rings). 

We denote by $\GCov$ the stack over $R$ of $G$-covers. The stack
$\GCov$ has been introduced in \cite{Tonini2011}, it is algebraic
and of finite type over $R$ and contains $\Bi_{R}G$ as an open substack.
\end{defn}
The following remark (see \cite[Part 1, 3.4]{Jantzen2003} for a proof)
will be often used in the next pages.
\begin{rem}
\label{rem: crucial isomorphism} If $M\in\QCoh^{G}R$ and $\varepsilon\colon R[G]\arr R$
is the counit then the evaluation in $\varepsilon$ yields an $R$-linear
isomorphism
\[
\Hom^{G}(\duale{R[G]},M)\simeq M
\]
or, equivalently, the composition $(R[G]\otimes M)^{G}\arr R[G]\otimes M\arrdi{\varepsilon\otimes\id_{M}}M$
is an $R$-linear isomorphism.\end{rem}
\begin{defn}
Given an $R$-scheme $T$ we denote by $\LAlg^{G}T$ the groupoid
of locally free sheaves of algebras over $T$ with an action of $G$
and by $\LAlg_{R}^{G}$ the stack over $R$ they form. Given $n\in\N$
we also denote by $\LAlg_{n}^{G}T$ (resp. $\LAlg_{R,n}^{G}$) the
subcategory of $\LAlg^{G}T$ (resp. substack of $\LAlg_{R}^{G}$)
of sheaves of rank $n$.\end{defn}
\begin{prop}
\label{prop:LAlg of finite presentation} We have that $\LAlg_{R}^{G}=\sqcup_{n\in\N}\LAlg_{R,n}^{G}$
and that $\LAlg_{R,n}^{G}$ is an algebraic stack of finite presentation
over $R$ for all $n\in\N$. Moreover the map
\[
\GCov\arr\LAlg_{R}^{G}\comma(f\colon X\arr Y)\longmapsto f_{*}\odi X
\]
is an open immersion.\end{prop}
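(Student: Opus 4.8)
The plan is to treat the three assertions in turn, the last being the crux.

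\emph{Rank decomposition.} The rank of a locally free sheaf of finite rank is a locally constant function on the base, and it is preserved by isomorphisms and by base change. Hence the assignment $\alA\longmapsto\rk\alA$ partitions $\LAlg_{R}^{G}$ into the open and closed substacks $\LAlg_{R,n}^{G}$, giving $\LAlg_{R}^{G}=\sqcup_{n\in\N}\LAlg_{R,n}^{G}$.

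\emph{Algebraicity and finite presentation.} I would build $\LAlg_{R,n}^{G}$ over $\Bi_{R}\GL_{n}$, the stack of locally free sheaves of rank $n$, which is algebraic and of finite presentation over $R$; let $\shE$ be its universal sheaf. Since $G$ is finite, flat and finitely presented, $R[G]$ is locally free of finite rank, so $\Homsh(\shE,\shE\otimes R[G])$, $\Homsh(\shE\otimes\shE,\shE)$ and $\shE$ are all locally free of finite rank on $\Bi_{R}\GL_{n}$. An object of $\LAlg_{n}^{G}$ over a base is exactly the datum of $\shE$ together with a coaction $\rho$, a multiplication $m$ and a unit section $1$, i.e.\ a point of the total space of the vector bundle $\Homsh(\shE,\shE\otimes R[G])\oplus\Homsh(\shE\otimes\shE,\shE)\oplus\shE$, which is affine, hence algebraic and of finite presentation, over $\Bi_{R}\GL_{n}$. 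The algebra and comodule axioms — associativity, commutativity and unitality of $(m,1)$, coassociativity and counit for $\rho$, and compatibility of $\rho$ with $m$ and $1$ — are each the equality of two morphisms of locally free sheaves, hence cut out a closed substack. Thus $\LAlg_{R,n}^{G}$ is a closed substack of an affine bundle over $\Bi_{R}\GL_{n}$, and so is algebraic and of finite presentation over $R$.

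\emph{The open immersion.} A morphism in $\GCov$ is by definition an isomorphism of the associated sheaves of algebras respecting the $G$-action, that is, an isomorphism in $\LAlg_{R}^{G}$; so the functor $j\colon\GCov\arr\LAlg_{R}^{G}$, $(f\colon X\arr Y)\longmapsto f_{*}\odi X$, is fully faithful, and an object of $\LAlg_{R}^{G}$ lies in its essential image precisely when the underlying locally free $G$-comodule is fppf locally isomorphic to $R[G]$. Hence $j$ is a monomorphism of stacks, landing in the rank $\rk G$ part. Both $\GCov$ (algebraic, of finite type over $R$ by \cite{Tonini2011}) and $\LAlg_{R}^{G}$ are algebraic and locally of finite presentation over $R$, so $j$ is a monomorphism locally of finite presentation, and in particular representable. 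Since an étale monomorphism of algebraic stacks is an open immersion, it suffices to show that $j$ is formally étale; as $j$ is a monomorphism the lift of a given square is automatically unique, so this reduces to the lifting property: for a square-zero extension $T_{0}\hookrightarrow T$ and $\alA\in\LAlg_{R}^{G}(T)$ whose restriction $\alA_{0}$ to $T_{0}$ is a $G$-cover, the sheaf $\alA$ is itself a $G$-cover.

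\emph{Main obstacle.} The heart of the matter is therefore the claim that the condition defining $\GCov$ inside $\LAlg_{R}^{G}$ — that the underlying locally free $G$-comodule be fppf locally isomorphic to the regular representation $R[G]$ — depends only on the geometric fibers. Granting this, $T_{0}$ and $T$ have the same geometric points and $\alA,\alA_{0}$ have the same geometric fibers, so the lifting property is immediate and the proof is complete. I expect this fiberwise characterization to be the only delicate point. It can be approached through the scheme $\Isosh^{G}(R[G]\otimes\odi T,\alA)$ of $G$-comodule isomorphisms, which is of finite presentation over $T$ and, over the locus where it is nonempty on geometric fibers, a torsor under the flat group scheme $\Autsh^{G}(R[G]\otimes\odi T)$; flatness of this Isom scheme over that locus provides an fppf trivialisation and hence the desired local isomorphism with $R[G]$. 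This argument simultaneously recovers the known fact that $\Bi_{R}G$ is open in $\GCov$.
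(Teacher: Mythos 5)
Your first two parts are fine and essentially coincide with the paper's argument: the rank decomposition is the same local-constancy observation, and your description of $\LAlg_{R,n}^{G}$ as a finitely presented closed substack of an affine bundle over $\Bi_{R}\GL_{n}$ is the paper's construction (the paper merely pulls back along the universal torsor $\Spec R\arr\Bi\GL_{n}$ before writing down the equations). Your packaging of the third part is also a legitimate alternative in principle: monomorphism, locally of finite presentation, plus formally \'etale implies open immersion, and the reduction of formal \'etaleness to a fiberwise criterion is correct, since a geometric point of a square-zero thickening factors through the closed subscheme, so $\alA$ and $\alA_{0}$ do have the same geometric fibers.

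The gap is in the step you yourself flag as the heart of the matter, and your sketch does not close it. You assert that $\Isosh^{G}(R[G]\otimes\odi T,\alA)$ is, over the locus of fiberwise nonemptiness, a torsor under the \emph{flat} group scheme $\Autsh^{G}(R[G]\otimes\odi T)$, and that ``flatness of this Isom scheme over that locus'' provides the trivialization. Neither flatness claim is routine, and they are exactly where the difficulty lives: for a general finite flat group scheme $G$ (no linear reductivity here), formation of invariants, hence of equivariant Hom modules $\Hom^{G}(V,W)=(\duale{V}\otimes W)^{G}$, does not commute with base change, so $\Homsh^{G}(R[G]\otimes\odi T,\alA)$ and $\Autsh^{G}(R[G]\otimes\odi T)$ are merely closed subschemes of vector bundles cut out by linear equations, with no a priori flatness. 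Moreover a fiberwise free and transitive action with nonempty fibers is only a pseudo-torsor; promoting it to a torsor requires precisely the flatness and fppf-local triviality you are trying to establish, so as written the argument is circular (and ``the locus where the fibers are nonempty'' is a priori only a constructible set). The missing ingredient is the paper's Remark \ref{rem: crucial isomorphism}: the evaluation at the counit gives a functorial, base-change-compatible isomorphism $\Hom^{G}(\duale{R[G]},M)\simeq M$. Taking $M=\duale{\alA}$, the Hom scheme of equivariant maps $\duale{R[G]}\otimes\odi T\arr\duale{\alA}$ is identified with the total space of the vector bundle corresponding to $\duale{\alA}$, hence is smooth over $T$; the Isom locus (the same functor as yours, via $\phi\longmapsto\duale{\phi}$) is open in it, hence flat and locally of finite presentation over $T$ with open image, and pulling back along it gives the fppf trivialization and your fiberwise criterion. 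This is exactly what the paper's proof does by hand, spreading out the element $\overline{\phi}\in\duale{\alA}\otimes k$ that corresponds to an isomorphism $\duale{k[G]}\arr\duale{(\alA\otimes k)}$ over an fppf neighborhood. With that remark inserted your outline can be completed; without it, the only nontrivial point of the proposition is assumed rather than proved.
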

\begin{proof}
The first claim follows from the fact that the rank function for a
locally free sheaf is locally constant. For the second one, consider
the forgetful functor $\LAlg_{R,n}^{G}\arr\Bi\GL_{n}$ and call $X$
the fiber product along the universal torsor $\Spec R\arr\Bi\GL_{n}$.
For simplicity we can assume that $R[G]$ is free as an $R$-module.
The stack $X$ is actually a sheaf $X\colon(\Sch/R)^{\op}\arr\sets$
and it maps a scheme $T$ to the set of all possible ring structures
together with an action of $G$ on $\odi T^{n}$. Since a ring structure
is given by maps $\odi T^{n}\otimes\odi T^{n}\arr\odi T^{n}$ (the
multiplication) and $\odi T\arr\odi T^{n}$(the unity), while a $R[G]$-comodule
structure by a map $\odi T^{n}\arr\odi T^{n}\otimes R[G]$ (the comodule
structure), we can embed $X$ into an affine space $\A^{N}$. The
compatibility conditions among the previous maps allow us to conclude
that $X$ is the zero locus in $\A^{N}$of finitely many polynomials,
as required.

We now deal with the last claim. Clearly the map in the statement
is fully faithful. We have to prove that if $\alA\in\LAlg^{G}B$,
where $B$ is a ring, then the locus in $\Spec B$ where $\alA$ is
fppf locally the regular representation is open. Concretely, if $\xi\colon\Spec k\arr\Spec B$
is a geometric point and $\alA\otimes k\in\GCov(k)$ we will prove
that there exists a flat and finitely presented map $\Spec B'\arr\Spec B$
through which $\xi$ factors and such that $\alA\otimes B'\simeq B'[G]$.
Denote by $p\in\Spec B$ the image of $\xi$. Both the stack $\GCov$
and $\LAlg_{R}^{G}$ are locally of finite type over $R$ and therefore
also the map $\GCov\arr\LAlg_{R}^{G}$ is so, which in particular
implies that $\alA\otimes\overline{k(p)}\in\GCov(\overline{k(p)})$.
Thus we can assume $k=\overline{k(p)}$. Since $k$ is algebraically
closed we have that $\alA\otimes k$ is the regular representation
and thus we have a $G$-equivariant isomorphism $\overline{\omega}\colon\duale{k[G]}\arr\duale{(\alA\otimes k)}$.
By \ref{rem: crucial isomorphism} the map $\overline{\omega}$ is
completely determined by a $\overline{\phi}\in\duale{\alA}\otimes k$.
There exists a finite field extension $L/k(p)$ such that $\overline{\phi}$
comes from some element in $\duale{\alA}\otimes L$ and it is a general
fact that we can find an fppf neighborhood $\Spec B'$ of $p$ in
$\Spec B$ with a point $p'\in\Spec B'$ over $p$ such that $k(p')=L$.
Up to shrinking $\Spec B'$ around $p'$ we can assume we have $\phi\in\duale{\alA}$
inducing $\overline{\phi}$. The element $\phi$ defines a $G$-equivariant
map $\omega\colon\duale{B[G]}\arr\duale{\alA}$ of locally free sheaves
on $A$ inducing $\overline{\omega}$. Since $\overline{\omega}$
is an isomorphism it follows that $\omega$ is an isomorphism in a
Zariski open neighborhood of $p$ as required.
\end{proof}

\begin{proof}
[Proof of Theorem \ref{A}, first sentence.] Let $A$ be an $R$-algebra.
By \ref{rem: crucial isomorphism} we have 
\[
\Omega_{V}^{A[G]}=(A[G]\otimes(V\otimes A))^{G}\simeq V\otimes A\text{ for }V\in\Loc^{G}R
\]
More precisely $\Omega^{A[G]}$ is isomorphic to the forgetful functor
$(-\otimes_{R}A)\colon\Loc^{G}R\arr\Loc A$ as monoidal functor. In
particular if $\alA\in\QAlg^{G}A$ is fppf locally isomorphic to $A[G]$
(without ring structure) then the functor $\Omega^{\alA}=(\alA\otimes-)^{G}\colon\Loc^{G}R\arr\QCoh A$
is fppf locally $R$-linearly isomorphic to the forgetful functor
$(-\otimes_{R}A)\colon\Loc^{G}R\arr\Loc A$ (without monoidal structure).
This easily implies that $\widetilde{\Delta}$ is well defined and
takes values in $\Lex_{R,\text{reg}}^{G}$. It is fully faithful thanks
to \ref{thm:sheaves and linear functors}. It extends the functor
$\Delta$ because if $f\colon X\arr\Spec A$ is a $G$-torsor corresponding
to $s\colon\Spec A\arr\Bi_{R}G$ then $s_{*}\odi A\simeq f_{*}\odi X$
as sheaves of algebras on $\Bi_{R}G$ and
\[
(s_{*}\odi A\otimes_{R}V)^{G}\simeq\Hom_{\Bi_{R}G}(\duale V,s_{*}\odi A)\simeq\Hom_{A}(s^{*}\duale V,A)\simeq s^{*}V\text{ for }V\in\Loc(\Bi_{R}G)=\Loc^{G}R
\]
We now prove that it is an open immersion. Let $\Gamma\in\Lex_{R}^{G}(A)$.
By \ref{thm:sheaves and linear functors} there exists $\alA\in\QAlg^{G}A$
such that $\Gamma\simeq\Omega^{\alA}$. By definition of $\Lex_{R}^{G}$
and taking into account \ref{rem: crucial isomorphism} we also have
that $\Omega_{R[G]}^{\alA}=(\alA\otimes R[G])^{G}\simeq\alA$ is a
locally free sheaf on $A$, that is $\alA\in\LAlg^{G}A$. The result
then follows because, by \ref{prop:LAlg of finite presentation},
the locus in $\Spec A$ where $\alA$ is fppf locally the regular
representation is open.\end{proof}
\begin{defn}
The group scheme $G$ is called \emph{linearly reductive} over $R$
if the functor of invariants
\[
(-)^{G}\colon\Mod^{G}R\arr\Mod R
\]
is exact.
\end{defn}
From now until the end of the section we will assume that $G$ is
linearly reductive over $R$. Remember that this condition is stable
under base change, is local in the fppf topology and that $G$ is
fppf locally well-split, which means isomorphic to a semidirect product
of a diagonalizable group scheme and a constant group whose order
is invertible in the base ring (see \cite[Proposition 2.6, Theorem 2.19]{Abramovich2007}).
We summarize some properties of linearly reductive groups we are going
to use.
\begin{prop}
\label{prop:properties linearly reductive groups} Let $T$ be an
$R$-scheme and $A$ be an $R$-algebra. Then
\begin{enumerate}
\item If $\shF\in\QCoh^{G}T$ and $\shH\in\QCoh T$ then the natural map
\[
\shF^{G}\otimes\shH\arr(\shF\otimes\shH)^{G}
\]
where the action of $G$ on $\shH$ is trivial, is an isomorphism.
In particular taking invariants $(-)^{G}\colon\QCoh^{G}T\arr\QCoh T$
commutes with arbitrary base changes.
\item If $\shF\in\QCoh^{G}T$ is locally free of finite rank then the map
$\shF^{\shG}\arr\shF$ locally splits. In particular $\shF^{G}$ is
locally free of finite rank.
\item Every short exact sequence in $\QCoh^{G}A$ of sheaves in $\Loc^{G}A$
splits. In particular any $R$-linear functor from $\Loc^{G}R$ to
an $R$-linear category is automatically exact.
\item If $R$ is a field any finite-dimensional representation of $G$ is
a direct sum of irreducible representations. 
\end{enumerate}
\end{prop}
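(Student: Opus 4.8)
The plan is to reduce each of the four assertions to the affine case $T=\Spec A$, since all of them are local on $T$, and then to feed the defining exactness of $(-)^{G}$ into standard homological manipulations. I write $M$ for a module in $\QCoh^{G}A$ and give it the coaction $\rho_{M}$.

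For (1), I would fix $M$ and compare the two functors $N\mapsto M^{G}\otimes_{A}N$ and $N\mapsto(M\otimes_{A}N)^{G}$ on $\Mod A$, the action on $N$ being trivial. Both are right exact (the first because $M^{G}\otimes_{A}-$ is; the second because $M\otimes_{A}-$ is right exact and $(-)^{G}$ is exact), and both commute with arbitrary direct sums (taking invariants is a kernel, and in a module category direct sums are exact and commute with $-\otimes R[G]$). The natural transformation $M^{G}\otimes_{A}N\arr(M\otimes_{A}N)^{G}$ is an isomorphism for $N=A$, hence for every free $N$; choosing a presentation $A^{(J)}\arr A^{(I)}\arr N\arr0$ and applying the two functors, a diagram chase (five lemma) upgrades this to an isomorphism for all $N$. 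Taking $N=B$ for an $A$-algebra $B$ and using that the statement is local on source and target gives the base-change compatibility of $(-)^{G}$.

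The case (2) is the \textbf{main obstacle}, and I would attack it through a Reynolds operator. First I produce a normalized integral: the unit $u\colon R\arr R[G]$ is a map from the trivial to the regular representation which is $R$-split by the counit, so $0\arr R\arr R[G]\arr Q\arr0$ is an $R$-split exact sequence of objects of $\Loc^{G}R$; dualizing with $\Hom_{R}(-,R)$ and applying the exact functor $(-)^{G}$ shows that $\Hom^{G}(R[G],R)\arr\Hom^{G}(R,R)$ is surjective, so $\id_{R}$ lifts to a comodule retraction $\lambda\colon R[G]\arr R$ with $\lambda(1)=1$. Averaging against $\lambda$ (the operator $(\id_{M}\otimes\lambda)\circ\rho_{M}$, with the usual Hopf-algebra bookkeeping) then yields a natural retraction $M\arr M^{G}$ of the inclusion $M^{G}\hookrightarrow M$. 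For $M=\shF$ locally free of finite rank this presents $\shF^{G}$ as a direct summand of $\shF$, so $\shF^{G}$ is locally free of finite rank and the inclusion splits. The delicate point is exactly the existence of this splitting: the alternative of deducing from (1) that $\shF^{G}$ is merely flat forces one to prove separately that it is finitely presented, which can be done by descending finiteness along the faithfully flat cover on which $G$ becomes well-split (where an explicit Reynolds operator is available), and the integral route is cleaner precisely because it bypasses these descent steps.

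Finally, (3) and (4) are formal consequences of exactness. For (3), given $0\arr M'\arr M\arr M''\arr0$ in $\Loc^{G}A$, the surjection $M\arr M''$ is $A$-split because $M''$ is projective, so $\Hom_{A}(M'',M)\arr\Hom_{A}(M'',M'')$ is a surjection of comodules; applying $(-)^{G}$ and using $\Hom_{A}(-,-)^{G}=\Hom^{G}(-,-)$ lifts $\id_{M''}$ to a $G$-equivariant section, splitting the sequence. Hence every short exact sequence in $\Loc^{G}R$ splits, and any additive $R$-linear functor automatically preserves it, giving the asserted exactness. For (4), over a field every finite-dimensional representation lies in $\Loc^{G}R$, so by (3) any nonzero subrepresentation is a direct summand; peeling off irreducible subrepresentations and inducting on the dimension expresses the representation as a direct sum of irreducibles.
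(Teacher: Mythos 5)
Your proofs of (1), (3) and (4) are correct and essentially the paper's own: (1) by checking the map on free modules and propagating along a presentation using right exactness of both sides and exactness of $(-)^{G}$; (3) by lifting an identity map through a surjection of internal Hom comodules after taking invariants (the paper lifts $\id_{V}$ along $\Hom(W,V)\arr\Hom(V,V)$ to get a retraction, you lift $\id_{M''}$ along $\Hom(M'',M)\arr\Hom(M'',M'')$ to get a section --- the same argument); (4) by induction from (3). For (2), however, you take a genuinely different route. The paper deduces from (1) that $\shF^{G}\arr\shF$ is universally injective (pure) and then, after reducing to a Noetherian base via the well-split structure of $G$, invokes Matsumura's criterion that a pure submodule with finitely presented quotient is a direct summand. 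You instead manufacture a normalized integral $\lambda\colon R[G]\arr R$ with $\lambda(1)=1$ by applying the exact functor $(-)^{G}$ to the $R$-dual of the counit-split sequence $0\arr R\arr R[G]\arr Q\arr 0$, and use the resulting Reynolds operator $(\id\otimes\lambda)\circ\rho$ as an explicit, functorial retraction of $M^{G}\hookrightarrow M$ for every comodule $M$. This is correct and arguably cleaner: it avoids the Noetherian reduction and the purity citation entirely, and yields a natural splitting valid for all of $\QCoh^{G}T$, not just locally free sheaves. The one point you wave at ("the usual Hopf-algebra bookkeeping") is real but harmless: the comodule-map condition on $\lambda$ gives an integral of one chirality while the verification that the averaged element is invariant uses the other; since $R[G]$ is a (commutative) Hopf algebra with involutive antipode, composing $\lambda$ with the antipode, or setting up the lift with the other regular coaction, fixes this. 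Your closing remark that the alternative is to deduce flatness of $\shF^{G}$ from (1) and then fight for finite presentation by descent is precisely the difficulty the paper's Noetherian reduction is designed to sidestep.
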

\begin{proof}
We can assume $T$ affine, say $T=\Spec A$ and replace $\shF,\shH$
with modules $F,H$ respectively. Point $1)$ follows because the
map in the statement is an isomorphism when $H$ is free and, in general,
using a presentation of $H$ and using the exactness of $(-)^{G}$.
Point $1)$ implies that $F^{G}\arr F$ is universally injective,
so that point $2)$ follows from \cite[Theorem 7.14]{Matsumura1989}
after reducing to a Noetherian base (for instance assuming that $G$
is well-split and, thus, defined over $\Z$). For $3)$, if $0\arr V\arr W\arr Z\arr0$
is an exact sequence of sheaves in $\Loc^{G}A$, then $\Hom(W,V)\arr\Hom(V,V)$
is surjective and, taking invariants, we can find an equivariant splitting.
Point $4)$ follows easily from $3)$.
\end{proof}
We now show an example of a finite, étale and linearly reductive group
$G$ over $\Q$ with $\widetilde{\Delta}(\GCov)\neq\Lex_{R,\text{reg}}^{G}$
(see Theorem \ref{A}).
\begin{example}
\label{rem: counterexample for gcovers and monoidal} Consider $R=\Q$,
$G=\Z/3\Z$, $\alA=\overline{\Q}[x,y]/(x,y)^{2}$ with the action
of $G\times\overline{\Q}\simeq\mu_{3}$ given by $\deg x=\deg y=1$
and $\Gamma=\Omega^{\alA}=(\alA\otimes_{\Q}-)^{G}\colon\Loc^{G}\Q\arr\Loc\overline{\Q}$.
We have that $\alA\notin\GCov(\overline{\Q})=\mu_{3}\textup{-Cov}(\overline{\Q})$
because $\alA$ is not isomorphic to the regular representation (it
does not contain the $\mu_{3}$-representation corresponding to the
character $2\in\Z/3\Z$). On the other hand we have $\Gamma\in\Lex_{\Q,\text{reg}}^{G}(\overline{\Q})$:
the rank condition can be easily checked on the two irreducible representations
of $G$ over $\Q$. By \ref{thm:sheaves and linear functors} we can
conclude that $\Gamma$ is not in the essential image of the functor
$\widetilde{\Delta}\colon\GCov\arr\Lex_{R}^{G}$.
\end{example}
The problem in the above example is that the group $\Z/3\Z$ has a
two-dimensional irreducible representation over $\Q$ which splits
over $\overline{\Q}$. We want therefore to find a class of linearly
reductive groups whose ``irreducible'' representations are also
geometrically irreducible.
\begin{lem}
\label{lem: glrg} Let $I$ be a finite collection of sheaves in $\Loc^{G}R$
which have positive rank in all points of $\Spec R$. The following
are equivalent:
\begin{enumerate}
\item the natural maps
\[
\eta_{M}\colon\bigoplus_{V\in I}V\otimes_{R}\Hom_{R}^{G}(V,M)\arr M\text{ for }M\in\Mod^{G}R
\]
are isomorphisms.
\item for all geometric points $\Spec k\arrdi{\xi}\Spec R$ the set $\{V\otimes_{R}k\}_{V\in I}$
is a set of representatives of the irreducible representations of
$G\times k$ and $V\otimes_{R}k\simeq W\otimes_{R}k$ if and only
if $V=W$.
\item (assuming $\Spec R$ connected) there exists a geometric point $\Spec k\arrdi{\xi}\Spec R$
for which the set $\{V\otimes_{R}k\}_{V\in I}$ is a set of representatives
of the irreducible representations of $G\times k$ and $V\otimes_{R}k\simeq W\otimes_{R}k$
if and only if $V=W$.
\end{enumerate}
In the above cases we have that $\Hom^{G}(V,W)=0$ if $V\neq W\in I$
and $\Hom^{G}(V,V)=R\id_{V}$ if $V\in I$.\end{lem}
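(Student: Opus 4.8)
The plan is to establish the chain of equivalences $(1)\Rightarrow(2)\Rightarrow(3)\Rightarrow(1)$ and then verify the final displayed assertions about Hom-spaces. The central tool will be the structure theory of linearly reductive groups recalled in Proposition \ref{prop:properties linearly reductive groups}, especially the commutation of $(-)^G$ with base change (point 1) and the semisimplicity over a field (point 4). The key observation is that the maps $\eta_M$ are all $R$-linear and natural, hence compatible with base change, so their being isomorphisms can be tested fibrewise.

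First I would treat $(1)\Rightarrow(2)$. Assuming $\eta_M$ is an isomorphism for all $M$, I fix a geometric point $\xi\colon\Spec k\arr\Spec R$ and apply $-\otimes_R k$. Because $(-)^G$ and the Hom-sheaves commute with the (flat) base change to $k$ by Proposition \ref{prop:properties linearly reductive groups}(1), the map $\eta_M\otimes k$ becomes the analogous map $\eta^k_{M\otimes k}$ for $G\times k$ over the field $k$. The statement that $\{V\otimes_R k\}_{V\in I}$ is a complete irredundant set of representatives of $\Irr(G\times k)$ is then exactly the assertion that $\bigoplus_{V}(V\otimes k)\otimes_k\Hom^{G\times k}(V\otimes k,-)\arr(-)$ is an isomorphism on all $G\times k$-representations — which is the classical Schur/semisimplicity statement (point 4) together with the fact that a set of pairwise non-isomorphic irreducibles that co-generates is automatically complete. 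The implication $(2)\Rightarrow(3)$ is trivial, taking the single point.

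The real content is $(3)\Rightarrow(1)$. Starting from one geometric point $\xi$ over a connected base where the $V\otimes_R k$ exhaust the irreducibles, I want to deduce that $\eta_M$ is an isomorphism for every $M\in\Mod^G R$. The strategy is: reduce to checking the cokernel and kernel of $\eta_M$ fibrewise over \emph{all} of $\Spec R$, not just at $\xi$. Here I expect the main obstacle. Over the distinguished point $\xi$ the map $\eta_M\otimes k$ is an isomorphism by semisimplicity, but I must propagate irreducibility and the non-isomorphism $V\otimes k\not\simeq W\otimes k$ to every other geometric point of the connected base. The natural mechanism is that $\rk_R\Hom^G_R(V,W)$ is a locally constant function on $\Spec R$ (since $\Hom^G_R(V,W)$ is locally free, using $V,W\in\Loc^G R$ and linear reductivity, Proposition \ref{prop:properties linearly reductive groups}(2)); being $0$ or $1$ at the point $\xi$ forces the same value at every point of the connected $\Spec R$. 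Likewise the rank of $V$ stays constant and positive. Once I know at \emph{every} geometric point that $\{V\otimes k'\}$ is a set of pairwise distinct irreducibles and that they exhaust $\Irr(G\times k')$ — the latter following because $\sum_V(\rk V)^2=\rk G$ is constant and equals the fibre dimension $\dim_{k'}k'[G]$, forcing exhaustion when it holds at one point — the fibrewise version of $(1)$ holds at every point. Then $\eta_M$ is an isomorphism on locally free $M$ by Nakayama/fibrewise-isomorphism for maps of locally free sheaves, and extends to arbitrary $M\in\Mod^G R$ by writing $M$ as a filtered colimit (or using a presentation) and invoking the exactness of $(-)^G$ and the compatibility of everything with colimits.

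Finally, for the concluding sentence, the equalities $\Hom^G(V,W)=0$ for $V\neq W$ in $I$ and $\Hom^G(V,V)=R\,\id_V$ follow from the fibrewise Schur's lemma together with the local constancy argument above: $\Hom^G_R(V,W)$ is locally free, its rank equals $\dim_k\Hom^{G\times k}(V\otimes k,W\otimes k)$ at the point $\xi$, which is $0$ when $V\neq W$ and $1$ (spanned by the identity) when $V=W$ by Schur over the algebraically closed field $k$; constancy over the connected base and the fact that $\id_V$ is a global section generating the rank-one sheaf in the second case give the stated identifications. The whole argument thus hinges on turning the single-point hypothesis of $(3)$ into a uniform fibrewise statement via local constancy of ranks of the relevant invariant Hom-sheaves over the connected base.
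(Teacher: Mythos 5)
Your proposal is correct and follows essentially the same route as the paper's proof: reduce $(1)\Rightarrow(2)$ to semisimplicity over an algebraically closed field via base change, propagate $(3)$ to all geometric points using local constancy of the rank of the locally free sheaf $\Hom^{G}(V,W)$, force exhaustion by the dimension count $\sum_{V}(\rk V)^{2}=\rk R[G]$, and pass from locally free comodules to arbitrary $M$ via a presentation by copies of $\duale{R[G]}$ together with exactness of $(-)^{G}$. The only cosmetic point is that over a disconnected base the implication $(2)\Rightarrow(1)$ must be run directly (checking ranks at all geometric points rather than propagating from a single one), but your fibrewise argument already contains this case.
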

\begin{proof}
We are going to use that taking invariants commutes with arbitrary
base changes (see \ref{prop:properties linearly reductive groups}).
If $\Spec k\arr\Spec R$ is a geometric point we set $G_{k}=G\times k$.

$1)\then2)$. If $\Spec k\arr\Spec R$ is a geometric point and $M\in\Mod^{G_{k}}k$
then $\Hom_{R}^{G}(V,M)\simeq\Hom_{k}^{G_{k}}(V\otimes k,M)$ and
$\eta_{M}\simeq(\eta_{M})\otimes k$. Thus we can assume that $R$
is an algebraically closed field. In this case the result follows
by decomposing representations into irreducible ones. 

$2),3)\then1)$. If $V,W\in\Loc^{G}R$ then $\Hom^{G}(V,W)$ is locally
free by \ref{prop:properties linearly reductive groups}, $2)$. Thus,
checking the rank on the geometric points (on the given geometric
point if $\Spec R$ is connected), if $V,W\in I$ then $\Hom^{G}(V,W)=0$
for $V\neq W$ and $\Hom^{G}(V,V)=R\id_{V}$. In particular if $\Spec k\arrdi{\xi}\Spec R$
is any geometric point then $\xi^{*}\colon I_{G}\arr\Loc^{G}k$ is
injective onto a subset of representatives of the irreducible representations
of $G\times k$. Given $M\in\Mod^{G}R$ we therefore have that $\xi^{*}\eta_{M}$
is injective and, if $\xi^{*}(I_{G})$ is a full set of representatives
of irreducible representations of $G\times k$, an isomorphism. If
$\Spec R$ is connected, so that $R[G]$ has constant rank, applying
this consideration to $M=R[G]$ and using \ref{rem: crucial isomorphism}
we can conclude that $3)\then2)$ by dimension. In particular $\eta_{M}$
is an isomorphism on all geometric points of $\Spec R$. If $M$ is
an arbitrary direct sum of locally free $G$-comodules of finite rank
it follows that $\eta_{M}$ is an isomorphism. In general, using \ref{rem: crucial isomorphism},
we can find an exact sequence of $G$-comodules $V_{1}\arr V_{0}\arr M\arr0$
where the $V_{i}$ are sum of copies of $\duale{R[G]}$. Since $\eta_{V_{0}},\eta_{V_{1}}$
are isomorphisms, by functoriality it follows that $\eta_{M}$ is
an isomorphism as well.\end{proof}
\begin{rem}
\label{rem:normalized collection glrg} If $I$ is a collection of
sheaves satisfying the conditions in \ref{lem: glrg}, then there
exists another collection $I'$ satisfying the same conditions and
such that $R\in I$. Indeed notice first that, if $R=R_{1}\times R_{2}$
and we are able to replace the collections $I_{|\Spec R_{1}}$ and
$I_{|\Spec R_{2}}$ then we can easily replace the collection $I$.
In particular, since the map $\eta_{R}$ in \ref{lem: glrg} is an
isomorphism, we can assume there exists $V\in I$ such that $V\otimes\Hom^{G}(V,R)\arr R$
is an isomorphism, which means that $V$ is an invertible sheaf with
the trivial action of $G$. If we replace $V$ by $R$ in $I$ we
find the desired collection. \end{rem}
\begin{defn}
We will say that $G$ has a \emph{good representation theory} over
$R$ if it admits a collection $I$ as in \ref{lem: glrg}. A good
linearly reductive group is a pair $(G,I_{G})$ where $G$ is a finite,
flat, finitely presented and linearly reductive group scheme over
$R$ and $I_{G}$ is a collection as in \ref{lem: glrg}. We will
simply write $G$ if this will not lead to confusion. For simplicity
we will also assume that $R\in I_{G}$ (see \ref{rem:normalized collection glrg}).
\end{defn}
If $R\arr R'$ is a morphism and $G$ is a good linearly reductive
group, then $G\times R'$ is naturally a good linearly reductive group
with the collection of the pullbacks of the modules in $I_{G}$.
\begin{rem}
All diagonalizable group schemes are good over the integers, while
if $R$ is a field, then $G$ is good if and only if its irreducible
representations are geometrically irreducible. 
\end{rem}
We are going to prove that any linearly reductive group is fppf locally
good.
\begin{lem}
\label{lem:deformation of locally free sheaves}Let $\stX$ be a proper
and flat algebraic stack over a Noetherian local ring $R$. Denote
by $k$ the residue field of $R$ and consider a locally free sheaf
$V_{0}$ of rank $n$ over $\stX\times k$. If $\Hl^{2}(\stX\times k,\Endsh(V_{0}))=0$,
then there exists a locally free sheaf of rank $n$ over $\stX\times\widehat{R}$
lifting $V_{0}$, where $\widehat{R}$ is the completion of $R$.\end{lem}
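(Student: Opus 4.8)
The plan is to produce the lift one infinitesimal step at a time and then algebraize. Write $\mathfrak{m}$ for the maximal ideal of $R$ and set $R_{i}=R/\mathfrak{m}^{i+1}$, so that $R_{0}=k$ and $\widehat{R}/\widehat{\mathfrak{m}}^{\,i+1}\simeq R_{i}$; abbreviate $\stX_{R_{i}}=\stX\times R_{i}$ and $\stX_{k}=\stX\times k$. First I would construct, by induction on $i$, a compatible system of locally free sheaves $V_{i}$ of rank $n$ on $\stX_{R_{i}}$ with $V_{i}\otimes_{R_{i}}R_{i-1}\simeq V_{i-1}$, starting from the given $V_{0}$; such sheaves are flat over $R_{i}$ automatically, locally free sheaves being flat.

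The inductive step is pure deformation theory for the square-zero extension $R_{i}\arr R_{i-1}$, whose kernel $J=\mathfrak{m}^{i}/\mathfrak{m}^{i+1}$ is a finite-dimensional $k$-vector space since $R$ is Noetherian. As $V_{i-1}$ is locally free it lifts locally, so the only obstruction to the existence of a global locally free lift $V_{i}$ on $\stX_{R_{i}}$ is a single class in $\Ext^{2}_{\stX_{R_{i-1}}}(V_{i-1},V_{i-1}\otimes_{R_{i-1}}J)$. Because $J$ is killed by $\mathfrak{m}$, the target is a sheaf of $k$-modules supported on the closed fiber, and the local-to-global spectral sequence together with the local freeness of $V_{i-1}$ identifies this group with $\Hl^{2}(\stX_{k},\Endsh(V_{0})\otimes_{k}J)$. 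Here I use that $\stX$ is flat over $R$, so that the underlying site of $\stX_{R_{i-1}}$ agrees with that of $\stX_{k}$ and $V_{i-1}\otimes_{R_{i-1}}k\simeq V_{0}$. Since $J$ is a finite sum of copies of $k$, this group equals $\Hl^{2}(\stX_{k},\Endsh(V_{0}))\otimes_{k}J$, which vanishes by hypothesis; hence the obstruction is zero and the lift $V_{i}$ exists at every stage.

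It then remains to algebraize the compatible system $\{V_{i}\}_{i}$. By construction it is a coherent sheaf on the formal completion of the proper algebraic stack $\stX\times\widehat{R}$ along its closed fiber, flat over $\widehat{R}$ because each $V_{i}$ is flat over $R_{i}$. Invoking Grothendieck's existence theorem (formal GAGA) for proper algebraic stacks over the complete Noetherian local ring $\widehat{R}$, this formal sheaf is the completion of a coherent sheaf $V$ on $\stX\times\widehat{R}$ with $V\otimes_{\widehat{R}}R_{i}\simeq V_{i}$, so in particular $V\otimes_{\widehat{R}}k\simeq V_{0}$ and $V$ is flat over $\widehat{R}$. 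A finitely presented sheaf that is flat over $\widehat{R}$ and whose closed fiber $V_{0}$ is locally free of rank $n$ is itself locally free of rank $n$ by the fiberwise criterion for local freeness, which produces the desired lift.

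The hard part will be the algebraization in the last paragraph: one must ensure that $\stX\times\widehat{R}$ is proper over $\widehat{R}$ and that the existence/formal-GAGA comparison is available in the setting of algebraic (Artin) stacks and not merely of schemes, which is the one substantial external input. By contrast, the obstruction calculus of the middle paragraph is routine once the obstruction space is correctly pinned down as $\Hl^{2}(\stX_{k},\Endsh(V_{0})\otimes_{k}J)$ and seen to vanish.
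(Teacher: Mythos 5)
Your argument is correct and has the same skeleton as the paper's: reduce, via Grothendieck's existence theorem for proper stacks, to a chain of square-zero extensions, kill the obstruction at each step using the vanishing of $\Hl^{2}(\stX\times k,\Endsh(V_{0}))$, and algebraize. The difference is in how the obstruction is produced and identified. You invoke the standard deformation-theoretic fact that the obstruction to lifting a locally free sheaf across a square-zero extension with ideal $J$ is a single class in $\Ext^{2}(V_{i-1},V_{i-1}\otimes J)$, and then use the local-to-global spectral sequence to identify this group with $\Hl^{2}(\stX\times k,\Endsh(V_{0}))\otimes_{k}J$. The paper instead makes the obstruction theory explicit in non-abelian terms: it forms the stack $\stY$ of liftings over the small fppf site of $\stX$, verifies by hand that it is a gerbe (local existence of liftings from smoothness of $\Gl_{n}$, local uniqueness from surjectivity of $\Gl_{n}(B)\arr\Gl_{n}(B/IB)$, automorphisms computed as $\Hom_{B}(N,IN)$), identifies the band as $\pi_{*}\Endsh(V_{0})$, and appeals to Giraud's classification of banded gerbes by $\Hl^{2}$ to conclude the gerbe is trivial and hence has a section. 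Your route is shorter but outsources exactly the step the paper proves from scratch, so in the stacky setting you would need a citation (Illusie's cotangent-complex obstruction theory, or an argument on the appropriate site) where the paper is self-contained modulo Giraud; conversely your explicit induction over the $R_{i}$ followed by formal GAGA is just an unpacking of the paper's opening sentence. One small misattribution: flatness of $\stX$ over $R$ is not what gives $V_{i-1}\otimes_{R_{i-1}}k\simeq V_{0}$ (that is your inductive hypothesis); it is what guarantees that the ideal of $\stX\times R_{i-1}$ inside $\stX\times R_{i}$ is $\odi{\stX\times k}\otimes_{k}J$, which is where the identification of the coefficient sheaf genuinely uses it.
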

\begin{proof}
Taking into account Grothendieck's existence theorem for proper stacks,
we can assume that $R$ is an Artinian ring (so that $\widehat{R}\simeq R$)
and that we have a lifting $\overline{V}$ of $V_{0}$ over $\stX\times(R/I)$,
where $I$ an ideal of $R$ such that $I^{2}=0$. Define the stack
$\stY$ over the small fppf site $\stX_{\text{fppf}}$ of $\stX$
whose objects over $\Spec B\arr\stX$ are locally free sheaves $N$
of rank $n$ over $B$ with an isomorphism $\phi\colon N\otimes(B/IB)\arr\overline{V}\otimes(B/IB)$.
A section of $\stY\arr\stX_{\text{fppf}}$ yields a lifting of $\overline{V}$
on $\stX$. We are going to prove that $\stY$ is a gerbe over $\stX_{\text{fppf}}$
banded by the sheaf of abelian groups $\pi_{*}\Endsh(V_{0})$, where
$\pi\colon\stX\times k\arr\stX$ is the obvious closed immersion.
Since $\Hl^{2}(\stX,\pi_{*}\Endsh(V_{0}))=\Hl^{2}(\stX\times k,\Endsh(V_{0}))=0$
parametrizes those gerbes (see \cite[Chapter IV, §3, Section 3.4]{Giraud1971}),
we can then conclude that $\stY\arr\stX_{\text{fppf}}$ is a trivial
gerbe, which means that it has a section as required.

I claim that $\overline{V}$ is trivial in the fppf topology of $\stX$,
which implies that $\stY\arr\stX_{\text{fppf}}$ has local sections.
Indeed if $B$ is a ring and $P\arr\Spec B/IB$ is a $\Gl_{n}$-torsor
then by standard deformation theory it extends to a smooth map $Q\arr\Spec B$.
In particular, if we base change to $Q$, we can conclude that $P$
over $Q\times(B/IB)$ has a section, which means that it is trivial.

I also claim that two objects of $\stY$ over the same object of $\stX_{\text{fppf}}$
are locally isomorphic. Replacing again locally free sheaves by $\Gl_{n}$-torsors,
given $\Gl_{n}$-torsors $P,Q$ over $\Spec B$, we have to show that
an equivariant isomorphism $P\times(B/IB)\arr Q\times(B/IB)$ locally
extends to an equivariant isomorphism $P\arr Q$. In particular we
can assume that $P$ and $Q$ are both trivial and in this case the
above property follows because $\Gl_{n}(B)\arr\Gl_{n}(B/IB)$ is surjective,
since $\Gl_{n}$ is smooth.

The previous two claims show that $\stY\arr\stX_{\text{fppf}}$ is
a gerbe. We have now to check the banding and therefore to compute
the automorphism group of an object $(N,\phi)\in\stY$ over a ring
$B$. The group $\Aut(\chi)$ consists of the automorphism $N\arrdi{\lambda}N$
inducing the identity on $N/IN$. It is easy to check that the map
\[
\Hom_{B}(N,IN)\arr\Aut\chi\comma\delta\longmapsto\id_{N}+\delta
\]
is an isomorphism of groups. Since $IN=I\otimes_{R}N$ and $N\otimes(B/m_{R}B)\simeq V_{0}\otimes(B/m_{R}B)$
we have
\[
\Hom_{B}(N,IN)=I\otimes\End_{B}(N)\simeq I/I^{2}\otimes\End_{B}(N)\simeq\End_{B/m_{R}B}(V_{0}\otimes(B/m_{R}B))
\]
\end{proof}
\begin{lem}
\label{lem:lifting representation on henselian rings}Assume that
$R$ is a Henselian ring with residue field $k$. The any finite dimensional
representation of $G$ over $k$ lifts to $R$.\end{lem}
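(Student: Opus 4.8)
The plan is to reformulate the lifting problem as the existence of a point on an auxiliary \emph{smooth} $R$-scheme, and then to invoke the Henselian property. Set $n=\dim_{k}V_{0}$ and let $\Rep_{G,n}$ be the $R$-scheme whose $A$-points are the $G$-comodule structures on the free module $A^{n}$, i.e.\ the coactions $A^{n}\arr A^{n}\otimes_{A}A[G]$ that are coassociative and counital. Exactly as in the proof of \ref{prop:LAlg of finite presentation}, such a coaction is a point of the affine space $\Hom(A^{n},A^{n}\otimes A[G])$ (a free module of finite rank, since $R[G]$ is finite locally free) and the two axioms are closed conditions cut out by finitely many polynomials; hence $\Rep_{G,n}$ is affine and of finite presentation over $R$. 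The given representation $V_{0}$ is a point of the special fibre, $V_{0}\in\Rep_{G,n}(k)$.

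The key step is that $\Rep_{G,n}\arr\Spec R$ is smooth. As $\Rep_{G,n}$ is of finite presentation, it suffices to check formal smoothness, so let $A'\arr A$ be a square-zero extension of $R$-algebras with kernel $I$ and let $V_{A}\in\Rep_{G,n}(A)$ be a coaction $\rho$ on $A^{n}$; I must lift $\rho$ to a coaction on $(A')^{n}$. Choosing any $A'$-linear lift $\widetilde{\rho}$ of $\rho$ that still satisfies the (affine-linear, hence solvable) counit condition, the coassociativity defect of $\widetilde{\rho}$ is a $2$-cocycle with values in $I$ whose class is the obstruction to the lift and lies in $\Ext^{2}_{G\times A}(V_{A},V_{A}\otimes_{A}I)=\Hl^{2}(\Bi(G\times A),\Endsh(V_{A})\otimes_{A}I)$. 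This is precisely the obstruction group that appears in \ref{lem:deformation of locally free sheaves} applied to $\stX=\Bi(G\times A)$.

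Since linear reductivity is stable under base change, $G\times A$ is linearly reductive over $A$, so $(-)^{G}$ is exact and all higher cohomology of $\Bi(G\times A)$ vanishes (this is the analogue over $A$ of \ref{prop:properties linearly reductive groups}); in particular the obstruction is $0$ and the lift exists. Therefore $\Rep_{G,n}$ is formally smooth, hence smooth, over $R$. Finally, because $R$ is Henselian local with residue field $k$ and $\Rep_{G,n}\arr\Spec R$ is smooth and of finite presentation, the point $V_{0}$ of the special fibre lifts to a section $V\in\Rep_{G,n}(R)$. This $V$ is a $G$-representation over $R$ with $V\otimes_{R}k\simeq V_{0}$, as required.

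I expect the main obstacle to be the deformation-theoretic step: identifying the obstruction to lifting a comodule structure with a class in $\Hl^{2}(\Bi(G\times A),\Endsh\otimes I)$ over an arbitrary (possibly non-Noetherian, non-field) base $A$, and checking that base change of linear reductivity makes this group vanish uniformly in $A$. Once smoothness is established, the lifting itself is the standard fact that a smooth morphism of finite presentation to a Henselian local ring admits a section through any prescribed point of the closed fibre over the residue field. An alternative route would first apply \ref{lem:deformation of locally free sheaves} to produce a lift over $\widehat{R}$ and then descend to $R$ by Artin approximation, but working directly with $\Rep_{G,n}$ sidesteps both the Noetherian hypothesis and the passage to the completion.
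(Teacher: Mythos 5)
Your proof is correct, but it takes a genuinely different route from the paper's. The paper first reduces (via finite presentation of $G$) to the case where $R$ is the Henselization of a local ring essentially of finite type over $\Z$, then applies its Lemma \ref{lem:deformation of locally free sheaves} to the proper flat stack $\Bi G$ — a gerbe argument combined with Grothendieck's existence theorem — to produce a lift of $V_{0}$ over the completion $\widehat{R}$, and finally descends to $R$ by Artin approximation. You instead build the affine, finitely presented $R$-scheme $\Rep_{G,n}$ of coactions on a fixed free module, prove it is \emph{smooth} over $R$ by identifying the obstruction to lifting a coaction along a square-zero extension with a class in $\Ext^{2}_{G\times A}(V_{A},V_{A}\otimes I)=\Hl^{2}(\Bi(G\times A),\Endsh(V_{A})\otimes I)$, which vanishes because linear reductivity is stable under base change, and then invoke the standard lifting property of smooth, finitely presented morphisms over a Henselian local ring. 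Both arguments hinge on the same cohomological input ($\Hl^{2}(\Bi G,-)=0$ from linear reductivity), but yours trades Grothendieck existence and Artin approximation for the elementary Hochschild/cobar deformation theory of comodules plus Hensel's lemma; this avoids the Noetherian reduction and the passage through $\widehat{R}$ entirely, at the cost of having to verify the cocycle identification over an arbitrary (non-Noetherian) $R$-algebra $A$ — which does go through, since the cobar complex computes $\Ext$ in the comodule category whenever $A[G]$ is flat. The only points worth spelling out in a final write-up are the correction of the counit condition (solvable because $\varepsilon$ is split surjective) and the fact that a flat lift of a free module along a square-zero extension is again free, so that deforming the comodule and deforming the coaction on the fixed basis are equivalent problems.
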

\begin{proof}
Since $G$ is finitely presented, we can assume that $R$ is the Henselization
of a scheme of finite type over $\Z$. Since $G$ is linearly reductive,
we have that $\Hl^{2}(\Bi(G\times k),-)=0$ and, viewing $G$-representations
as sheaves over $\Bi G$ and using \ref{lem:deformation of locally free sheaves},
we obtain a lifting of $V$ to a representation over the completion
$\widehat{R}$. We can then conclude using Artin's approximation theorem
over $R$.\end{proof}
\begin{prop}
\label{prop:G has locally a good representation theory}There exists
an fppf covering $\stU=\{U_{i}\arr\Spec R\}_{i\in I}$ such that $G\times_{S}U_{i}$
has a good representation theory over $U_{i}$ for all $i$. If $G$
is étale over $R$ there exists an étale covering with the same property.\end{prop}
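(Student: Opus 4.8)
The plan is to reduce to a pointwise statement and then to build the collection $I_{G}$ by lifting a set of geometrically irreducible representatives from a suitable residue field, using Lemma~\ref{lem:lifting representation on henselian rings}. First I would note that both the fppf and the \'etale statements are local on $\Spec R$: it is enough to produce, for every $p\in\Spec R$, an fppf (resp. \'etale, if $G$ is \'etale) neighbourhood of $p$ over which $G$ has a good representation theory, since such neighbourhoods form a covering. Fix $p$ and write $k=k(p)$. Because taking invariants commutes with base change and $\Hom^{G}$ of locally free sheaves is locally free (Proposition~\ref{prop:properties linearly reductive groups}), the criterion of Lemma~\ref{lem: glrg} for a candidate finite collection of positive rank can be checked at a single geometric point over each connected component. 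Hence it suffices to construct the collection over a connected neighbourhood and verify condition $3)$ of Lemma~\ref{lem: glrg} at one geometric point over $p$.

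The engine of the argument is as follows. Suppose I can find a flat, finitely presented (resp. \'etale) map $\Spec S\arr\Spec R$ and a point $p'$ over $p$ whose residue field $\ell=k(p')$ is a \emph{splitting field} for $G\times\ell$, meaning that $\ell\hookrightarrow\overline{\ell}$ induces an irreducibility--preserving bijection between the irreducible representations of $G\times\ell$ and those of $G\times\overline{\ell}$. Passing to the Henselization of $S$ at $p'$ (Henselian local with residue field $\ell$) and applying Lemma~\ref{lem:lifting representation on henselian rings}, each of the finitely many irreducible $\ell$-representations lifts to a locally free $G$-comodule; these lifts are finitely presented, hence already defined over some \'etale neighbourhood $U$ of $p'$. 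Then $U\arr\Spec R$ is fppf (resp. \'etale, as a composite of \'etale maps), and after shrinking $U$ to a connected neighbourhood of $p'$ the lifted modules form a finite collection $I_{G}\subseteq\Loc^{G}U$ of positive rank. By construction their reductions at the geometric point $\Spec\overline{\ell}\arr U$ over $p'$ are exactly the $\overline V\otimes_{\ell}\overline{\ell}$, a full set of distinct irreducibles of $G\times\overline{\ell}$; so condition $3)$ of Lemma~\ref{lem: glrg} holds and $I_{G}$ exhibits a good representation theory over $U$. After normalizing via Remark~\ref{rem:normalized collection glrg} I may also arrange $R\in I_{G}$.

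It remains to produce the splitting residue field, and here linear reductivity is what I would exploit: the algebra $A=(k[G])^{\vee}$ is finite-dimensional semisimple and stays semisimple after any base change (each $G\times(-)$ is again linearly reductive), which forces the centre of $A$ to be \'etale over $k$, so every simple factor has a separable centre and is split by a finite separable extension. In the \emph{\'etale} case $G\times\overline{\ell}$ is constant of order prime to $\car k$, the Brauer group of a separably closed field is trivial, and the relevant cyclotomic splitting extensions are separable; therefore $\ell=k^{\textup{sep}}$, realized as the residue field of the strict Henselization $R^{\textup{sh}}_{p}$ (a filtered colimit of \'etale neighbourhoods), is already a splitting field and no fppf extension is needed. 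In the general \emph{fppf} case I would instead pick a finite separable extension $L/k$ splitting $A$ and realize it as $\ell=k(p')$ for a point of an fppf neighbourhood $\Spec B'\arr\Spec R$, exactly by the technique used in the proof of Proposition~\ref{prop:LAlg of finite presentation}. In both cases the irreducibility--preserving bijection holds because, once $A\otimes_{k}\ell$ is split semisimple, $A\otimes_{k}\overline{\ell}=(A\otimes_{k}\ell)\otimes_{\ell}\overline{\ell}$ has the same simple factors, each remaining a matrix algebra over the larger field.

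I expect the main obstacle to be precisely this last step: producing a splitting field inside a cover of the allowed type while controlling separability of the centres of $A$. The subtlety is that over an imperfect $k$ a careless choice could destroy semisimplicity after base change; the point is that linear reductivity itself guarantees the centres are separable, so a finite separable (for the fppf case) or the separable-closed (for the \'etale case) residue field does the job. Once that is settled, the Henselian lifting lemma together with the single--geometric--point criterion of Lemma~\ref{lem: glrg} finish the proof routinely.
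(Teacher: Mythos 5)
Your proposal is correct and follows the same overall architecture as the paper's proof: reduce to the residue field $k(p)$, produce a suitable finite extension $L/k(p)$ over which $G$ becomes good, realize $L$ as the residue field of a point of an fppf (resp.\ \'etale) neighbourhood, lift the finitely many representations over the Henselization via Lemma~\ref{lem:lifting representation on henselian rings}, and descend to a finite \'etale stage, checking condition $3)$ of Lemma~\ref{lem: glrg} at one geometric point after shrinking to a connected neighbourhood. Where you genuinely diverge is in how $L$ is produced. The paper takes $L$ to be any finite extension over which the finitely many irreducible representations of $G\times\overline{k(p)}$ are defined --- a priori inseparable, whence only an fppf neighbourhood in general --- and treats the \'etale case by a separate reduction: over $k^{\textup{sep}}$ the group becomes constant of order prime to the characteristic, hence is defined over the perfect field $\F_{p}$. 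You instead invoke the structure theory of separable algebras: since linear reductivity is stable under base change, $(k[G])^{\vee}$ is a separable $k$-algebra, its simple factors are central simple over finite separable extensions of $k$, and each is split by a finite separable extension (a maximal separable subfield). This is a valid and arguably cleaner argument, and it buys something the paper does not claim: it produces a finite \emph{separable} splitting field in all cases, so your method yields the \'etale-covering conclusion without assuming $G$ \'etale, strictly more than the proposition asserts (and not in conflict with anything in the paper, which only states and uses the weaker version). The one place to be slightly careful is your phrasing in the \'etale case, where you take $\ell=k^{\textup{sep}}$: this is an infinite extension, but since the finitely many simple modules are each defined over a finite separable subextension (or, equivalently, since the strict Henselization is a filtered colimit of \'etale algebras and the lifted comodules are finitely presented), the descent to a genuine \'etale neighbourhood goes through as you indicate.
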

\begin{proof}
We start with the case when $R=k$ is a field. The group $G$ is good
after a finite extension of $k$ because an irreducible representation
of $G$ over the algebraic closure of $k$ is always defined over
a finite extension of $k$. Now assume that $G$ is étale. If $k$
is perfect there is nothing to prove. So assume $\car k=p>0$. After
passing to a separable extension of $k$ we can assume that $G$ is
constant of order prime to $p$. So $G$ is defined over $\F_{p}$,
which is perfect and again we have our claim.

Now return to the general case. Since $G$ is finitely presented,
we can assume that $R$ is of finite type over $\Z$. Let $p\in\Spec R$
and $L/k(p)$ an extension such that $G_{L}=G\times L$ is good, with
$L/k(p)$ separable if $G$ is étale. There exists a flat finitely
presented map $h\colon\Spec R'\arr\Spec R$ such that $h^{-1}(p)\simeq\Spec L$.
If $L/k$ is separable we can even assume that $h$ is étale. This
shows that we can assume that $G_{k(p)}=G\times k(p)$ is good. From
\ref{lem:lifting representation on henselian rings} any $G_{k(p)}$
representation lifts to $R_{p}^{h}$, the Henselization of $R_{p}$,
and, since this ring is a direct limit of algebras étale over $R$,
we get the required result.
\end{proof}
Putting together \ref{lem:lifting representation on henselian rings}
and \ref{prop:G has locally a good representation theory} we get:
\begin{thm}
\label{thm:etale linearly reductive over sctrictly are glrg}A constant
linearly reductive group over a strictly Henselian ring has a good
representation theory.
\end{thm}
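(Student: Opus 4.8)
The plan is to reduce everything to the two preceding results together with the connectedness of $\Spec R$. Write $k$ for the residue field of $R$; since $R$ is strictly Henselian it is in particular local, so $\Spec R$ is connected, and $k$ is separably closed. As $G$ is constant it is in particular \'etale over $R$. The goal is to exhibit a finite collection $I_{G}\subseteq\Loc^{G}R$ satisfying the conditions of \ref{lem: glrg}; by that lemma this is exactly what it means for $G$ to have a good representation theory, and because $\Spec R$ is connected it suffices to verify criterion $3)$ of \ref{lem: glrg} at a single, conveniently chosen geometric point.

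First I would establish goodness over the residue field. By the field case treated in the proof of \ref{prop:G has locally a good representation theory}, an \'etale group over a field becomes good after a finite \emph{separable} extension; since $k$ is separably closed no nontrivial such extension exists, so $G\times k$ is already good over $k$. Concretely, let $W_{1},\dots,W_{r}$ be representatives of the irreducible representations of $G\times k$. Goodness over $k$ means precisely that $k$ is a splitting field for $G$, i.e.\ each $W_{i}$ is geometrically irreducible and $k[\Gamma]$ (where $G=\underline{\Gamma}$) decomposes as a product of matrix algebras over $k$. Consequently, for any extension of $k$, and in particular for an algebraic closure $\overline{k}$, the base changes $W_{i}\otimes_{k}\overline{k}$ remain irreducible, are pairwise non-isomorphic, and form a complete set of representatives of the irreducible representations of $G\times\overline{k}$.

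Next I would lift and conclude. By \ref{lem:lifting representation on henselian rings}, applied to the Henselian ring $R$, each $W_{i}$ lifts to some $V_{i}\in\Loc^{G}R$ with $V_{i}\otimes_{R}k\simeq W_{i}$; set $I_{G}=\{V_{1},\dots,V_{r}\}$, a finite collection of sheaves of positive rank. Choosing the geometric point $\Spec\overline{k}\arr\Spec R$ through the closed point, we have $V_{i}\otimes_{R}\overline{k}\simeq W_{i}\otimes_{k}\overline{k}$, so by the previous paragraph $\{V_{i}\otimes_{R}\overline{k}\}$ is a full system of pairwise non-isomorphic irreducible representations of $G\times\overline{k}$. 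This is exactly hypothesis $3)$ of \ref{lem: glrg}, which together with the connectedness of $\Spec R$ forces the maps $\eta_{M}$ to be isomorphisms; hence $I_{G}$ is a good collection and $G$ has a good representation theory over $R$. Since the trivial representation of $G\times k$ lifts to the trivial representation $R$, one may moreover arrange $R\in I_{G}$ as required by the normalization convention.

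The only real content is goodness over the separably closed residue field, and this is precisely the field case of \ref{prop:G has locally a good representation theory} specialized to a field admitting no nontrivial finite separable extension; the remaining steps are the Henselian lifting of \ref{lem:lifting representation on henselian rings} and the single-geometric-point criterion of \ref{lem: glrg}. The one point to keep honest is that passing from $k$ to its possibly inseparable algebraic closure $\overline{k}$ neither creates nor merges irreducible representations, which holds because $k$, being a splitting field, already realizes all of them as matrix algebras, a property stable under arbitrary base change.
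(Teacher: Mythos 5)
Your proof is correct and follows exactly the route the paper intends: the paper's entire argument is the single line ``putting together \ref{lem:lifting representation on henselian rings} and \ref{prop:G has locally a good representation theory}'', i.e.\ goodness over the separably closed residue field via the field case of \ref{prop:G has locally a good representation theory}, lifting the irreducibles by \ref{lem:lifting representation on henselian rings}, and invoking criterion $3)$ of \ref{lem: glrg} at the closed point. You have simply written out these details explicitly and accurately.
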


\begin{rem}
\label{rem: explicit functor sheaves for glrg} If $(G,I_{G})$ is
a good linearly reductive group there is an explicit way to map linear
functors to sheaves, which may be useful in concrete examples. Let
$T$ be an $R$-scheme, set $\L_{R}^{G}(T)$ for the category of $R$-linear
functors $\Loc^{G}R\arr\QCoh T$ and define
\[
\shF_{*}\colon\L_{R}^{G}(T)\arr\QCoh^{G}T\comma\shF_{\Gamma}=\bigoplus_{V\in I_{G}}\duale V\otimes\Gamma_{V}
\]
where the action of $G$ on the $\Gamma_{V}$ is trivial. Using \ref{lem: glrg}
it is easy to see that $\shF_{*}$ is a quasi-inverse of $\Omega^{*}\colon\QCoh^{G}T\arr\L_{R}^{G}(T)$,
$\Omega^{\shG}=(\shG\otimes-)^{G}$, the other natural isomorphism
being
\[
\beta_{U}\colon\Omega_{U}^{\shF_{\Gamma}}\simeq(U\otimes\shF_{\Gamma})^{G}\simeq\bigoplus_{V\in I_{G}}\Hom^{G}(V,U)\otimes\Gamma_{V}\arr\Gamma_{U}\text{ for }\Gamma\in\L_{R}^{G}(T)\comma U\in\Loc^{G}R
\]
The map $\beta_{U}^{-1}\colon\Gamma_{U}\arr(U\otimes\shF_{\Gamma})^{G}$
is uniquely determined by a map $\alpha_{U}\colon\duale U\otimes\Gamma_{U}\arr\shF_{\Gamma}$.
It is easy to see that:
\begin{enumerate}
\item if $U\in I_{G}$ then $\alpha_{U}$ is the inclusion;
\item if $U=U_{1}\oplus U_{2}$ then $\alpha_{U}$ is zero on $\duale{U_{i}}\otimes\Gamma_{U_{j}}$for
$i\neq j\in\{1,2\}$ and coincides with $\alpha_{U_{i}}$ on $\duale{U_{i}}\otimes\Gamma_{U_{i}}$
for all $i=1,2$;
\item if $U=\shH\otimes U'$ for $\shH\in\Loc R$ and $U'\in\Loc^{G}R$
then $\alpha_{U}$ is
\[
\duale U\otimes\Gamma_{U}\simeq\duale{\shH}\otimes\shH\otimes U'\otimes\Gamma_{U'}\arrdi{\text{ev}_{\shH}\otimes\alpha_{U'}}\shF_{\Gamma}
\]
where $\text{ev}_{\shH}\colon\duale{\shH}\otimes\shH\arr R$ is the
evaluation;
\item if $\gamma\colon V\arr U$ is a $G$-equivariant isomorphism then
$\alpha_{V}=\alpha_{U}\circ[(\duale{\gamma})^{-1}\otimes\Gamma_{\gamma}]$.
\end{enumerate}
Using the maps $\alpha_{*}$ (and by going through the definitions)
if $\Gamma$ is a monoidal functor the associated ring structure on
$\shF_{\Gamma}$ is given by
\[
\duale V\otimes\Gamma_{V}\otimes\duale W\otimes\Gamma_{W}\arr\duale{(V\otimes W)}\otimes\Gamma_{V\otimes W}\arrdi{\alpha_{V\otimes W}}\shF_{\Gamma}\text{ for }V,W\in I_{G}
\]
\end{rem}
\begin{proof}
[Proof of Theorem \ref{A}, last sentence.] The functor $\widetilde{\Delta}\colon\LAlg_{R}^{G}\arr\Lex_{R}^{G}$
is well defined thanks to \ref{prop:properties linearly reductive groups}.
It is an equivalence thanks to \ref{thm:sheaves and linear functors}
and the fact that if $\alA\in\QAlg^{G}T$ and $\Omega^{\alA}\in\Lex_{R}^{G}(T)$
then, using \ref{rem: crucial isomorphism}, $\alA\simeq(\alA\otimes R[G])^{G}=\Omega_{R[G]}^{\alA}$
is locally free of finite rank. 

We now show the last equality in the statement. Using notation from
\ref{rem: explicit functor sheaves for glrg}, if $\Gamma\in\Lex_{R,\text{reg}}^{G}(T)$
then $\alA=\shF_{\Gamma}\in\QAlg^{G}T$ is such that $\Gamma\simeq\Omega^{\alA}$.
We can assume that $\Gamma_{V}$ is free of rank $\rk V$ for all
$V\in I_{G}$. In this case $R[G]\otimes\odi T$ and $\alA$ have
the same decomposition in terms of the representations in $I_{G}$
and thus they are isomorphic.

We finally show that $\GCov$ is open and closed in $\LAlg_{R}^{G}$.
This problem is fppf local in the base, thus we can assume that $G$
is a good linearly reductive group thanks to \ref{prop:G has locally a good representation theory}.
In this case $\GCov$ (resp. $\LAlg_{R}^{G}$) corresponds to $\Lex_{R,\text{reg}}^{G}$
(resp. $\Lex_{R}^{G}$) via $\widetilde{\Delta}$ and $\Lex_{R,\text{reg}}^{G}$
is the locus in $\Lex_{R}^{G}$ of functors $\Gamma$ such that $\rk\Gamma_{V}=\rk V$
for all $V\in I_{G}$. Since $I_{G}$ is finite, this is an open and
closed condition, as required.
\end{proof}

\section{Induction from a subgroup for equivariant algebras.}

As in the previous section we fix a base ring $R$ and a flat, finite
and finitely presented group scheme $G$ over $R$.

Let $H$ be an open and closed subgroup scheme of $G$. If $\shF\in\QCoh^{H}T$
we define the induction from $H$ to $G$ of $\shF$, denoted by $\ind_{H}^{G}\shF$,
as $(\shF\otimes R[G])^{H}\in\QCoh^{G}T$. For details and properties
we refer to \cite[Part I, Section 3]{Jantzen2003}. If $\shF$ is
also a quasi-coherent sheaf of algebras, that is $\shF\in\QAlg^{H}T$,
then $\ind_{H}^{G}\shF\in\QAlg^{G}T$, that is it inherits a natural
structure of sheaf of algebras with an action of $G$. The aim of
this section is to prove the following.
\begin{thm}
\label{thm:ind is etale} If $H$ is an open and closed subgroup scheme
of $G$ the functor
\[
\ind_{H}^{G}\colon\LAlg_{R}^{H}\arr\LAlg_{R}^{G}\comma\alA\longmapsto(\alA\otimes R[G])^{H}
\]
is well defined, quasi-affine and étale. The (open) image consists
of those $\alA\in\LAlg_{R}^{G}T$ such that, for all geometric points
$\Spec k\arr T$, there exists a subset of points of $\Spec(\alA\otimes k)$
whose geometric stabilizers are contained in $H\times k$ and whose
$G(k)$-orbits cover the whole $\Spec(\alA\otimes k)$.\end{thm}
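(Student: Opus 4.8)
The plan is to establish the four claimed properties of $\ind_H^G$ separately, building from the purely algebraic/representation-theoretic facts toward the geometric ones. The first task is to check that the functor is \emph{well defined}, i.e.\ that it lands in $\LAlg_R^G$: if $\alA \in \LAlg^H T$ is locally free of finite rank, then $(\alA \otimes R[G])^H$ is again locally free of finite rank. For this I would use the standard identification $\ind_H^G R[H] \simeq R[G]$ (the induction of the regular representation of $H$ is the regular representation of $G$), together with the fact that, locally in the fppf topology, $\alA$ looks like a sum of copies of the base. More concretely, since $H$ is open and closed in $G$, a choice of coset representatives (fppf-locally) gives $R[G] \simeq \bigoplus_{gH} R[H]$ as an $H$-comodule, so $(\alA \otimes R[G])^H \simeq \bigoplus_{gH} (\alA \otimes R[H])^H$, and by \ref{rem: crucial isomorphism} each summand is just $\alA$ as a sheaf. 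Thus $\ind_H^G \alA$ is locally free of rank $[G:H] \cdot \rk \alA$, and the algebra structure is the one induced from $\alA$ and $R[G]$.

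\emph{For the \'etale and quasi-affine claims}, the natural strategy is to exhibit $\ind_H^G$ as essentially an open-and-closed decomposition combined with a change-of-group torsor construction. The cleanest route is to recognize $\LAlg_R^H$ and $\LAlg_R^G$ via the monoidal-functor description of Theorem \ref{A} (at least after passing fppf-locally to where $G$ is linearly reductive, hence so is $H$, since $H$ is open and closed in $G$); but since $\ind_H^G$ is defined for arbitrary $G$, I would instead argue directly with the presentations used in \ref{prop:LAlg of finite presentation}. Pick an fppf cover trivializing $R[G]$; over it, the functor becomes an explicit algebraic operation on the affine parameter spaces, and I would compute its differential (equivalently, its relative cotangent complex or deformation functor) to see that it is formally \'etale, then combine with finite presentation. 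The key structural input is that induction from an open-and-closed subgroup is an exact functor with an explicit two-sided adjoint behaviour (Frobenius reciprocity), which should force the deformation theory to match on both sides: a first-order deformation of $\ind_H^G \alA$ as a $G$-algebra should canonically correspond to a first-order deformation of $\alA$ as an $H$-algebra. Quasi-affineness I would get by showing the functor is representable by a quasi-affine morphism, e.g.\ as a locally closed condition inside an affine-over-affine situation, exploiting that it is a monomorphism onto its image combined with the image being open.

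\emph{The heart of the theorem is the image description.} I would prove the set-theoretic characterization pointwise first, over a geometric point $\Spec k \to T$, where everything becomes concrete: $G(k)$ is a finite group, $\Spec(\alA\otimes k)$ is a finite $G(k)$-set with function algebra $\alA \otimes k$, and $\ind_H^G$ on algebras is dual to the induction/restriction of finite $G$-sets. The point is that $\Spec(\ind_H^G \alB \otimes k) = G(k) \times_{H(k)} \Spec(\alB \otimes k)$, an induced $G(k)$-set, and a finite $G(k)$-set $Z$ is of the form $G(k)\times_{H(k)} W$ for some $H(k)$-set $W$ \emph{if and only if} there is an $H(k)$-stable subset of $Z$ meeting every $G(k)$-orbit, with the stabilizers of its points contained in $H(k)$ --- this is exactly the stated condition on geometric stabilizers and orbits. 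Once the pointwise statement is in place, I would upgrade it to the scheme-level image statement by noting that "being in the image'' is an fppf-local and open condition (openness coming from the \'etale claim plus constructibility), so it suffices to check it on geometric points, which is what the pointwise analysis does.

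\textbf{The main obstacle} I expect is the \'etale claim, specifically verifying that $\ind_H^G$ induces an isomorphism on deformations and automorphisms (formal \'etaleness) rather than merely a smooth or unramified map. The Frobenius-reciprocity adjunction makes it plausible that the tangent-obstruction theories of $\LAlg_R^H$ and $\LAlg_R^G$ correspond under induction, but one must be careful that this correspondence is an \emph{equivalence} of deformation groupoids --- i.e.\ that every $G$-algebra deformation of an induced algebra is itself induced, with no extra moduli and no extra automorphisms. Making this rigorous likely requires the explicit coset decomposition of $R[G]$ over $H$ together with the exactness of $(-)^H$ in the relevant range, and checking that infinitesimal $G$-automorphisms of $\ind_H^G \alA$ that fix the induced structure come precisely from $H$-automorphisms of $\alA$. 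This is where the open-and-closed hypothesis on $H$ is essential, since it guarantees the coset decomposition is well behaved and that restriction and induction are exact; I would isolate this as the technical lemma and reduce the global \'etaleness to it.
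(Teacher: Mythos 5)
Your plan for well-definedness is the paper's argument verbatim (coset decomposition of $R[G]$ over $H$ plus \ref{rem: crucial isomorphism} giving $\ind_H^G\alA\simeq\alA^{\shR}$ locally), and your pointwise analysis of the image is also essentially the paper's, though you implicitly treat $G\times k$ as a finite abstract group acting on a finite set; for general finite flat $G$ the fiber $\Spec(\alA\otimes k)$ can be non-reduced and $G\times k$ can have an infinitesimal part, so one must work with connected components and the component group $\underline{G}$, distinguishing the geometric stabilizer (a closed subgroup scheme) from the open-and-closed stabilizer of a component --- this is what the paper's Lemmas \ref{lem:criterion ind isomorphism} and \ref{lem:induction from an open sugroup and stabilizers} are for, and your argument needs that layer added.

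The genuine gap is in the \'etale claim, which you correctly flag as the main obstacle but do not resolve. ``Frobenius reciprocity forces the tangent--obstruction theories to match'' is not an argument: the adjunction lives at the level of comodules and does not by itself control deformations of the \emph{algebra} structure, nor does it show that every $G$-algebra deformation of an induced algebra is induced with no extra automorphisms. The missing idea is a reformulation of the fiber product: by adjunction, a point of $\ind_H^G\times_{\LAlg_R^G}\Spec A$ over $\alA$ is the same as an $H$-equivariant algebra surjection $\alA\arr\alB$ onto an open-and-closed direct factor of $\Spec\alA$ whose adjoint $\alA\arr\ind_H^G\alB$ is an isomorphism. Once this is in place, \'etaleness is immediate from the rigidity of open-and-closed decompositions under square-zero extensions: over an Artinian $A$ with $J^2=0$, the factor $\Spec\alB'\subseteq\Spec(\alA/J\alA)$ lifts uniquely to $\Spec\alB\subseteq\Spec\alA$, and $H$-invariance and the isomorphism condition can both be checked modulo $J$. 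The same reformulation gives separatedness via the monomorphism into $\Hilb_{\Spec\alA/A}$, which is what the paper feeds into \cite[Appendice A, Theorem A.2]{Laumon1999} to get quasi-affineness; your alternative route to quasi-affineness via ``a monomorphism onto its image'' fails because $\ind_H^G$ is not a monomorphism (already for $H=1$, $G=\Z/2\Z$ and $\alA=R\times R$ the fiber has two points).
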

\begin{lem}
\label{cor:base change of local ring is local for strictly henselian ring}
Assume that $R$ is a strictly Henselian ring. If $A,B$ are local
$R$-algebras such that $A$ is finite over $R$ and the maximal ideal
of $B$ lies over the maximal ideal of $R$, then $A\otimes_{R}B$
is local.\end{lem}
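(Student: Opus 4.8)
The plan is to show that $S:=A\otimes_{R}B$ has a unique maximal ideal by reducing, through a sequence of harmless modifications, to a purely field-theoretic statement over the separably closed residue field of $R$. First note that $S$ is finite over $B$, being the base change along $R\to B$ of the finite $R$-algebra $A$. Write $k=R/\mathfrak m_{R}$ and $k_{B}=B/\mathfrak m_{B}$; the hypothesis that $\mathfrak m_{B}$ lies over $\mathfrak m_{R}$ means precisely that $R\to B$ is local, so $k_{B}$ is naturally a field extension of $k$. Similarly, since $A$ is local and finite over the local ring $R$, the structure map $R\to A$ is local (every maximal ideal of the integral extension $A$ contracts to $\mathfrak m_{R}$, so $\mathfrak m_{R}A\subseteq\mathfrak m_{A}$), and hence $\bar A:=A\otimes_{R}k$ is an Artinian local ring with residue field $k_{A}=A/\mathfrak m_{A}$, a finite extension of $k$.

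The first key step is to reduce locality of $S$ to locality of the fiber $S/\mathfrak m_{B}S$. Because $S$ is finite, hence integral, over the local ring $B$, every maximal ideal of $S$ contracts to $\mathfrak m_{B}$ and therefore contains $\mathfrak m_{B}S$. Thus the maximal ideals of $S$ are in bijection with those of $S/\mathfrak m_{B}S\simeq A\otimes_{R}k_{B}$, and $S$ is local if and only if this last ring---a finite, hence Artinian, $k_{B}$-algebra---is local. I emphasise that this step is what lets us get by with $B$ merely local rather than Henselian: we never split $S$ into a product of local rings, we only count its maximal ideals.

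The second key step identifies $A\otimes_{R}k_{B}\simeq\bar A\otimes_{k}k_{B}$ and strips away nilpotents. The maximal ideal $\mathfrak n$ of $\bar A$ is nilpotent, and tensoring $0\to\mathfrak n\to\bar A\to k_{A}\to0$ with the flat $k$-module $k_{B}$ gives a surjection $\bar A\otimes_{k}k_{B}\twoheadrightarrow k_{A}\otimes_{k}k_{B}$ whose kernel $\mathfrak n\otimes_{k}k_{B}$ is nilpotent; hence the two rings have homeomorphic spectra, and locality of $A\otimes_{R}k_{B}$ is equivalent to locality of $k_{A}\otimes_{k}k_{B}$. Here the strict Henselianity of $R$ finally enters: $k$ is separably closed, so the finite extension $k_{A}/k$ is purely inseparable (trivial in residue characteristic $0$). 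The last step is then the field-theoretic fact that the tensor product of a purely inseparable extension with an arbitrary field extension is local, and this is the step I expect to demand the most care. Concretely, any $k_{B}$-algebra homomorphism from $k_{A}\otimes_{k}k_{B}$ to a field is forced, since each generator $\alpha$ of $k_{A}$ satisfies $\alpha^{p^{e}}\in k\subseteq k_{B}$ and Frobenius is injective on a field, so $\alpha$ has a unique $p^{e}$-th root there; consequently $(k_{A}\otimes_{k}k_{B})_{\mathrm{red}}$ is a single field and $k_{A}\otimes_{k}k_{B}$ has a unique prime. Tracing the equivalences back shows that $S=A\otimes_{R}B$ is local, as required.
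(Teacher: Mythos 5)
Your proof is correct and follows the same route as the paper's (much terser) argument: reduce, via finiteness of $A\otimes_{R}B$ over the local ring $B$, to the locality of $k_{A}\otimes_{k}k_{B}$, and then use that $k_{A}/k$ is purely inseparable because $k$ is separably closed. You simply supply the intermediate reductions and the field-theoretic endgame that the paper leaves implicit.
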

\begin{proof}
Set $k_{A}\comma k_{B}$ for their residue fields. Since $A\otimes_{R}B$
is finite over $B$ it is enough to note that $k_{A}\otimes_{k_{R}}k_{B}$
is local since $k_{A}/k_{R}$ is purely inseparable.\end{proof}
\begin{lem}
\label{lem:connected components of finite group schemes over Henselian rings}
Assume that $R$ is a strictly Henselian ring and let $X\arr\Spec R$
be a cover with an action of $G.$ Consider the decomposition into
connected components
\[
G=\bigsqcup_{i\in\underline{G}}G_{i}\text{ and }X=\bigsqcup_{j\in\underline{X}}X_{j}
\]
Given $i\in\underline{G}$ and $j\in\underline{X}$ the restriction
of the action $X_{j}\times G_{i}\arr X$ factors through a unique
component $X_{j\star i}$ with $j\star i\in\underline{X}$. The operation
$-\star-\colon\underline{G}\times\underline{G}\arr\underline{G}$
obtained when $X=G$ with the right action of $G$ by multiplication
makes $\underline{G}$ into a group, whose unity $1\in\underline{G}$
is the connected component containing the identity. In general the
association $\underline{X}\times\underline{G}\arr\underline{X}$ defines
a right action of $\underline{G}$ on the set $\underline{X}$. Moreover
$G_{1}$ is a subgroup scheme of $G$ and the map $G_{i}\times G_{1}\arr G_{i}$
makes $G_{i}$ into a $G_{1}$-torsor for all $i\in\underline{G}$.\end{lem}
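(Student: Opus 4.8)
The plan is to reduce every assertion of the lemma to a single topological input: over the strictly Henselian ring $R$, a fibre product of connected components is again connected. Writing $X_j=\Spec A_j$ and $G_i=\Spec B_i$, each $A_j$ and each $B_i$ is a finite local $R$-algebra, since a finite algebra over a strictly Henselian local ring splits canonically as a finite product of local rings. Hence \ref{cor:base change of local ring is local for strictly henselian ring} shows $A_j\otimes_R B_i$ is local, so $X_j\times_R G_i$ is connected. Consequently the restriction $X_j\times_R G_i\arr X$ of the action, being a morphism out of a connected scheme into the disjoint union $X=\bigsqcup_j X_j$ of open and closed pieces, factors through a unique component, which I define to be $X_{j\star i}$. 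Applying \ref{cor:base change of local ring is local for strictly henselian ring} inductively gives that \emph{any} finite fibre product of components is connected; this is the only geometric fact I will need, and every remaining step is a matter of tracking which component a connected subscheme lands in.

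First I would treat the case $X=G$ with right multiplication, which produces $\star\colon\underline{G}\times\underline{G}\arr\underline{G}$. Associativity follows by restricting the associativity identity for multiplication to the connected triple product $G_j\times_R G_i\times_R G_k$: its image lies in one component, which must equal both $G_{(j\star i)\star k}$ and $G_{j\star(i\star k)}$. The component $1$ of the identity section $e\colon\Spec R\arr G$ is a right unit, because $G_j\cong G_j\times_R\Spec R\arrdi{\id\times e}G_j\times_R G_1\arr G$ is the inclusion, forcing $j\star1=j$. Thus $(\underline{G},\star,1)$ is a monoid.

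To obtain inverses, the one genuinely group-theoretic point, I would use the inverse morphism $\iota\colon G\arr G$, which permutes components; let $i'$ be defined by $\iota(G_i)=G_{i'}$. Restricting $(g,h)\mapsto gh^{-1}$ to the connected $G_i\times_R G_i$ shows its image lies in a single component, and precomposing with the diagonal yields the constant map $e$, so that component is $G_1$. Since $(g,h)\mapsto gh^{-1}$ factors as $G_i\times_R G_i\arrdi{\id\times\iota}G_i\times_R G_{i'}\arr G$ with the first map an isomorphism, I conclude $G_i\cdot G_{i'}\subseteq G_1$, that is $i\star i'=1$. A monoid in which every element has a right inverse is a group, so $\underline{G}$ is a group. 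The general claim that $\underline{X}\times\underline{G}\arr\underline{X}$ is a right action is then proved by the identical ``restrict to a connected product and compare components'' device, applied to the action axioms rather than to the group axioms.

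Finally, $G_1$ contains $e$, is closed under multiplication (as $1\star1=1$) and under $\iota$ (which fixes the component of $e$), hence is an open and closed subgroup scheme; and $G_i\cdot G_1\subseteq G_{i\star1}=G_i$ gives a right $G_1$-action on each $G_i$. To see it is a torsor I would restrict the shear isomorphism $G\times_R G\arr G\times_R G$, $(g,h)\mapsto(g,gh)$, to $G_i\times_R G_1$: its image lands in $G_i\times_R G_i$, while the inverse shear $(g,k)\mapsto(g,g^{-1}k)$ sends $G_i\times_R G_i$ back into $G_i\times_R G_1$, since $g^{-1}k\in G_{i'\star i}=G_1$. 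Thus the shear restricts to an isomorphism $G_i\times_R G_1\arrdi{\sim}G_i\times_R G_i$; as $G_i\arr\Spec R$ is finite and flat of positive rank, hence faithfully flat, this exhibits $G_i$ as a $G_1$-torsor. The main obstacle throughout is purely the connectivity input: once \ref{cor:base change of local ring is local for strictly henselian ring} is in hand, each assertion collapses to bookkeeping of connected components.
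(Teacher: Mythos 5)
Your proof is correct and takes essentially the same route as the paper, which likewise reduces everything to the fact that finite algebras over a Henselian local ring are products of local rings together with Lemma \ref{cor:base change of local ring is local for strictly henselian ring}, ensuring that fibre products of components are connected. You have simply written out in full the component bookkeeping (associativity, right inverses, the shear-map criterion for the torsor claim) that the paper leaves to the reader, and each of those steps checks out.
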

\begin{proof}
Finite algebras over Henselian rings are products of their localizations.
In particular the $G_{i}$ and $X_{j}$ are the spectrum of the localizations
of $\Hl^{0}(\odi G)$ and $\Hl^{0}(\odi X)$ respectively. All the
conclusions follow easily from \ref{cor:base change of local ring is local for strictly henselian ring}.\end{proof}
\begin{lem}
\label{lem:criterion ind isomorphism} Let $H$ be an open and closed
subgroup scheme of $G$ and let $B$ be a local ring with residue
field $k$, $\alA\in\LAlg^{G}B$, $Z=\Spec\widetilde{\alA}\subseteq\Spec\alA$
be an $H$-equivariant open and closed subscheme. Then the map $\alA\arr\ind_{H}^{G}\widetilde{\alA}$
induced by the projection $\alA\arr\widetilde{\alA}$ is an isomorphism
if and only if 
\[
(Z\times\overline{k})g\cap Z\times\overline{k}\neq\emptyset\then g\in H(\overline{k})\qquad\forall g\in G(\overline{k})
\]
and the $G(\overline{k})$-orbits of $Z\times\overline{k}$ cover
the whole $\Spec(\alA\otimes\overline{k})$. In this case $\widetilde{\alA}\in\LAlg^{H}B$
and the geometric stabilizers of $Z$ for the action of $H$ or $G$
coincide. If in addition $G$ is étale over $B$, then we can replace
$\overline{k}$ with the separable closure of $k$ in the formula
above.\end{lem}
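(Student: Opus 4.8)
The plan is to reduce the whole statement to a computation over the residue field, where the combinatorics of connected components from \ref{lem:connected components of finite group schemes over Henselian rings} can be read off directly. First I would use that, since $H$ is open and closed in $G$, the quotient $G/H$ is finite and �tale, so the induction functor $\ind_H^G$ is exact, preserves finite locally free sheaves and commutes with arbitrary base change (see \cite[Part I, Section 3]{Jantzen2003}). In particular $\widetilde{\alA}$, being a direct summand of the finite free $B$-module $\alA$ (it is $Z$ open and closed in $\Spec\alA$), is itself finite free, so $\widetilde{\alA}\in\LAlg^H B$ and $\ind_H^G\widetilde{\alA}$ is again finite free over $B$. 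Thus the map $\phi\colon\alA\arr\ind_H^G\widetilde{\alA}$ is a morphism of finite free $B$-modules, and since $B$ is local $\phi$ is an isomorphism if and only if $\phi\otimes_B k$ is, hence if and only if $\phi\otimes_B\overline{k}$ is, by faithful flatness of $\overline{k}/k$. Because $\ind$ commutes with base change, $\phi\otimes_B\overline{k}$ is the analogous map attached to $\alA\otimes\overline{k}$ and $\widetilde{\alA}\otimes\overline{k}$, so I may assume $B=\overline{k}$ is separably (even algebraically) closed, a setting in which \ref{lem:connected components of finite group schemes over Henselian rings} applies.

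Next I would give $\phi$ its geometric meaning. The projection $\alA\arr\widetilde{\alA}$ is the $H$-equivariant map cutting out the open and closed $H$-stable subscheme $Z=\Spec\widetilde{\alA}$, and $\phi$ is its image under the Frobenius reciprocity adjunction $\Hom^G(\alA,\ind_H^G\widetilde{\alA})\simeq\Hom^H(\alA,\widetilde{\alA})$; on spectra it corresponds to the map $\psi\colon\Spec\ind_H^G\widetilde{\alA}\arr X:=\Spec(\alA\otimes\overline{k})$ induced by the action of $G$. Since $G/H$ is the finite set of cosets $H(\overline{k})\backslash G(\overline{k})$, choosing representatives $g_1,\dots,g_r\in G(\overline{k})$ one identifies $\Spec\ind_H^G\widetilde{\alA}$ with a disjoint union $\bigsqcup_{c=1}^r Z_c$ of $r=[\,G:H\,]$ copies of $Z$, the restriction of $\psi$ to $Z_c$ being the right translation $z\longmapsto zg_c$. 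As $g_c\in G(\overline{k})$ acts by an automorphism of $X$ and $Z$ is open and closed, each $\psi|_{Z_c}$ is an open and closed immersion with image the translate $Zg_c$; equivalently, writing $e$ for the $H$-invariant idempotent of $\alA\otimes\overline{k}$ with $Z=\Spec(e(\alA\otimes\overline{k}))$, the map $\phi\otimes\overline{k}$ becomes $a\longmapsto(e^{(c)}a)_c$, where $e^{(c)}$ is the $g_c$-translate of $e$, the idempotent of $Zg_c$. I expect this identification to be the main obstacle: matching $\phi$ with $\psi$ equivariantly and, above all, checking that $\ind$ commutes with the base change $B\arr\overline{k}$. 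It is exactly here that openness and closedness of $H$ is indispensable, since it makes $G/H$ finite �tale and so the Mackey-type decomposition $\bigsqcup_c Z_c$ available and compatible with base change.

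With this in hand the equivalence is immediate. The map $\psi$ is an isomorphism if and only if the translates $Zg_1,\dots,Zg_r$ are pairwise disjoint and cover $X$, i.e.\ the idempotents $e^{(c)}$ are orthogonal with sum $1$. Covering is exactly the condition that the $G(\overline{k})$-orbits of $Z\times\overline{k}$ fill up $\Spec(\alA\otimes\overline{k})$, because $Z$ is $H$-stable and the $g_c$ run through coset representatives. For disjointness, if $(Z\times\overline{k})g\cap(Z\times\overline{k})\neq\emptyset$ I write $g=hg_c$ with $h\in H(\overline{k})$; then $Zg=Zg_c$ meets $Z=Zg_{c_0}$ (with $g_{c_0}=\id$), forcing $c=c_0$ and hence $g\in H(\overline{k})$, which is the displayed implication. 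Conversely that implication yields $Zg_c\cap Zg_{c'}=\emptyset$ for $c\neq c'$ by translating one factor by $g_{c'}^{-1}$. This proves the stated equivalence.

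Finally I would record the addenda. That $\widetilde{\alA}\in\LAlg^H B$ was already noted. For the geometric stabilizers, let $z$ be a geometric point of $Z$ and $S\subseteq G\times\overline{k}$ its stabilizer: each $g\in S(\overline{k})$ satisfies $zg=z\in Z$, so $(Z\times\overline{k})g\cap(Z\times\overline{k})\neq\emptyset$ and $g\in H(\overline{k})$ by the implication just proved; since $\overline{k}$ is algebraically closed every connected component of the finite scheme $S$ carries a $\overline{k}$-point, so $S(\overline{k})\subseteq H(\overline{k})$ forces $S\subseteq H\times\overline{k}$, and the stabilizers for $H$ and $G$ coincide. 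For the last sentence, the whole argument only used that the base field is separably closed, which guarantees both that \ref{lem:connected components of finite group schemes over Henselian rings} applies and that $\phi$ is an isomorphism iff its base change to that field is. When $G$ is �tale the coset representatives $g_c$, the quotient $G/H$ and the action are already defined over the separable closure $k^{\mathrm{sep}}$ (each component $G_i\times k^{\mathrm{sep}}$ being a single rational point), so one may run the identical computation over $k^{\mathrm{sep}}$ and replace $\overline{k}$ by $k^{\mathrm{sep}}$ throughout.
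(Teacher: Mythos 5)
Your proposal is correct and follows essentially the same route as the paper: both arguments reduce to the residue field ($\overline{k}$, or $k^{s}$ in the \'etale case) using that a map of finite free modules over a local ring is an isomorphism iff it is so modulo the maximal ideal, and both rest on the coset decomposition identifying $\Spec(\ind_{H}^{G}\widetilde{\alA})$ with $\bigsqcup_{g\in\shR}Z$ mapping to $\Spec\alA$ by the translations $g_{|Z}$, from which the two conditions are read off. The only difference is the order of the reductions (the paper first splits $G$ into cosets of $H$ over a cover of the base and then passes to the residue field, you do the reverse), and your base-change compatibility of $\ind_{H}^{G}$ is justified by exactly the same local computation $\ind_{H}^{G}\widetilde{\alA}\simeq\widetilde{\alA}^{\shR}$ that the paper performs, so the two proofs coincide in substance.
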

\begin{proof}
It is easy to see that there exists a (étale if $G/R$ is étale) cover
$\Spec R'\arr\Spec R$ such that $G\times R'$ splits as disjoint
union of copies of $H\times R'$, that is the right cosets of $H\times R'$.
Localizing in a maximal ideal of $R'$ we see that we can assume this
decomposition holds also for $R$ and that $R=B$. In particular $R[G]\simeq R[H]^{\shR}$,
where $\shR\subseteq G(R)$ is a set of representatives of the right
cosets of $H$, and therefore, using \ref{rem: crucial isomorphism},
we have
\[
\ind_{H}^{G}\widetilde{\alA}=(\widetilde{\alA}\otimes R[G])^{H}\simeq(\widetilde{\alA}\otimes R[H]^{\shR})^{H}\simeq((\widetilde{\alA}\otimes R[H])^{H})^{\shR}\simeq\widetilde{\alA}^{\shR}
\]
In particular $\ind_{H}^{G}\widetilde{\alA}$ is flat over $B$ and,
if $\alA\simeq\ind_{H}^{G}\widetilde{\alA}$, then $\widetilde{\alA}$
is locally free and therefore $\widetilde{\alA}\in\LAlg^{H}B$. Since
the map $\alA\arr\ind_{H}^{G}\widetilde{\alA}$ is an isomorphism
if and only if it is so after tensoring with $\overline{k}$ or the
separable closure $k^{s}$, we can assume that $R=B=L$ is $k^{s}$
if $G/B$ is étale or $\overline{k}$ otherwise. The action of $G$
on $\ind_{H}^{G}\widetilde{\alA}\simeq\widetilde{\alA}^{\shR}$ is
induced by the right action of $G(L)$ on $\shR$ and the the action
of $H$ on $\widetilde{\alA}$. Thus the map 
\[
\Spec(\ind_{H}^{G}\widetilde{\alA})=\bigsqcup_{g\in\shR}Z\arr\Spec\alA
\]
is the disjoint union of the $g_{|Z}\colon Z\arr\Spec\alA$ where
$g_{|Z}$ is the restriction of the action of $g\in G(L)$. Taking
into account \ref{lem:connected components of finite group schemes over Henselian rings},
the above map is an isomorphism if and only if $\Spec\alA$ is the
disjoint union of the $Zg$ for $g\in\shR$, which is equivalent to
the two conditions given in the statement.\end{proof}
\begin{defn}
If $R$ is a strictly Henselian ring, $X\arr\Spec R$ a cover with
an action of $G$ and $X_{i}$ a connected component of $X$ we call
the \emph{stabilizer} of $X_{i}$ the open and closed subgroup $H$
of $G$ which is the disjoint union of the components $G_{j}$ of
$G$ such that $X_{i}G_{j}\subseteq X_{i}$.\end{defn}
\begin{lem}
\label{lem:induction from an open sugroup and stabilizers} Assume
that $R$ is a strictly Henselian ring with residue field $k$ and
let $\alA\in\LAlg^{G}R$, $p\in\Spec\alA$ be a maximal ideal and
denote by $H_{p}$ the geometric stabilizer of $p$ and by $U_{p}$
the stabilizer of the connected component $\Spec\alA_{p}$. Then $H_{p}$
is a closed subgroup scheme of $U_{p}\times\overline{k}$, they are
topologically equal and, if $G(\overline{k})$ acts transitively on
$\Spec(\alA\otimes\overline{k})$, there exists an isomorphism
\[
\ind_{U_{p}}^{G}\alA_{p}\simeq\alA
\]
\end{lem}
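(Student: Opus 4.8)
The plan is to reduce the whole statement to the combinatorics of connected components from Lemma~\ref{lem:connected components of finite group schemes over Henselian rings}, performed after base change to $\overline{k}$, and then to read off the induction isomorphism directly from Lemma~\ref{lem:criterion ind isomorphism}. The single geometric input that makes everything collapse is that each connected component of $\Spec\alA$ stays connected, indeed becomes a single topological point, over $\overline{k}$.

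First I would analyze this base change. Write $X_{j}=\Spec\alA_{p}$ for the connected component of $\Spec\alA$ containing $p$. Since $R$ is strictly Henselian its residue field $k$ is separably closed, so the residue field of the local ring $\alA_{p}$ is purely inseparable over $k$; hence by Lemma~\ref{cor:base change of local ring is local for strictly henselian ring} (applied to $\alA_{p}$ and $\overline{k}$) the ring $\alA_{p}\otimes_{R}\overline{k}$ is local, and being finite over $\overline{k}$ it is Artinian local. Thus $X_{j}\times\overline{k}=\Spec(\alA_{p}\otimes_{R}\overline{k})$ is the single point $\overline{p}$. The same reasoning applied to the components $G_{i}$ of $G$ shows that the decompositions of $G$ and of $\Spec\alA$ over $R$ coincide with those over $\overline{k}$, so the operation $\star$ of Lemma~\ref{lem:connected components of finite group schemes over Henselian rings} still governs the $\underline{G}$-action on $\underline{X}$ after base change.

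Next I would identify $H_{p}$. By definition it is the fibre over $\overline{p}$ of the orbit morphism $o\colon G\times\overline{k}\arr\Spec(\alA\otimes\overline{k})\comma g\longmapsto\overline{p}\cdot g$, a closed subgroup scheme of $G\times\overline{k}$. Since $o$ carries the component $G_{i}\times\overline{k}$ into $X_{j\star i}\times\overline{k}$, the fibre over $\overline{p}\in X_{j}\times\overline{k}$ can only meet those $G_{i}\times\overline{k}$ with $j\star i=j$, i.e. those with $G_{i}\subseteq U_{p}$; so $H_{p}\subseteq U_{p}\times\overline{k}$, and being closed in $G\times\overline{k}$ it is a closed subgroup scheme of the open-and-closed subgroup $U_{p}\times\overline{k}$. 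For the topological equality I use that $X_{j}\times\overline{k}$ is a single point: for each $i$ with $j\star i=j$ the restriction $o_{|G_{i}\times\overline{k}}$ lands in that single point $\overline{p}$, so all of $G_{i}\times\overline{k}$ lies in $o^{-1}(\overline{p})$. Hence $H_{p}$ and $U_{p}\times\overline{k}=\bigsqcup_{j\star i=j}G_{i}\times\overline{k}$ have the same underlying space, possibly differing only in nilpotent structure.

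Finally I would obtain the isomorphism from Lemma~\ref{lem:criterion ind isomorphism} with $H=U_{p}$ and $Z=\Spec\alA_{p}=X_{j}$, which is a $U_{p}$-equivariant open and closed subscheme and, as a direct factor of $\alA$, gives $\alA_{p}\in\LAlg^{U_{p}}R$. The lemma asks for two conditions on $Z\times\overline{k}=\{\overline{p}\}$: coverage of $\Spec(\alA\otimes\overline{k})$ by the $G(\overline{k})$-orbit of $Z\times\overline{k}$, which is precisely the transitivity hypothesis; and the implication $(Z\times\overline{k})g\cap(Z\times\overline{k})\neq\emptyset\then g\in U_{p}(\overline{k})$, which holds because $Z\times\overline{k}$ is a single point, so such a $g$ stabilizes $\overline{p}$ and therefore lies in a component $G_{i}$ with $j\star i=j$, i.e. in $U_{p}(\overline{k})$. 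Both conditions being met, Lemma~\ref{lem:criterion ind isomorphism} yields $\alA\simeq\ind_{U_{p}}^{G}\alA_{p}$. I expect the only genuinely delicate point to be the reduction that $\Spec(\alA_{p}\otimes_{R}\overline{k})$ is a single topological point, which hinges on the separable-closedness of $k$; once this is established both the subgroup-scheme claim and the induction isomorphism follow formally from the two cited lemmas, and one should keep in mind that the scheme structure on $H_{p}$ need only agree topologically with $U_{p}\times\overline{k}$.
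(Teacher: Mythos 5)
Your proposal is correct and follows essentially the same route as the paper: reduce everything to the locality of $\alA_{p}\otimes_{R}\overline{k}$ via Lemma~\ref{cor:base change of local ring is local for strictly henselian ring}, identify $H_{p}$ inside $U_{p}\times\overline{k}$ through the component combinatorics of Lemma~\ref{lem:connected components of finite group schemes over Henselian rings}, and conclude by applying Lemma~\ref{lem:criterion ind isomorphism} with $Z=\Spec\alA_{p}$ and $H=U_{p}$. You simply spell out the verifications that the paper leaves implicit.
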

\begin{proof}
We are going to use \ref{cor:base change of local ring is local for strictly henselian ring}
several times. Set $X=\Spec\alA$ and $X_{p}=\Spec\alA_{p}$. Notice
that the closed points of $\Spec\alA$ correspond to $\Spec(\alA\otimes k)$
or $\Spec(\alA\otimes\overline{k})$, so that we can also think $p\in\Spec(\alA\otimes\overline{k})$.
Moreover $U_{p}\times\overline{k}$ is the stabilizer of the connected
component $\Spec\alA_{p}\otimes\overline{k}$ of $\Spec\alA\otimes\overline{k}$.
In particular $H_{p}(\overline{k})=U_{p}(\overline{k})$ so that $H_{p}$
is a closed subgroup scheme of $G\times\overline{k}$ contained in
$U_{p}\times\overline{k}$. Moreover we can apply \ref{lem:criterion ind isomorphism}
with $Z=\Spec\alA_{p}$ and $H=U_{p}$ obtaining the desired isomorphism.
\end{proof}

\begin{proof}
[Proof of Theorem \ref{thm:ind is etale}.] Arguing as in the proof
of \ref{lem:criterion ind isomorphism}, we can assume that $G$ is
a disjoint union of copies of $H$, namely its right cosets, obtaining
an isomorphism
\[
\ind_{H}^{G}\alB=(\alB\otimes R[G])^{H}\simeq((\alB\otimes R[H])^{H})^{\shR}\simeq\alB^{\shR}\text{ for }\alB\in\LAlg_{R}^{H}
\]
where $\shR\subseteq G(R)$ is a set of representatives of the right
cosets of $H$ in $G$. This shows that $\ind_{H}^{G}$ is well defined.
Moreover, since it is faithful, it is also representable by algebraic
spaces. We are going to prove that it is étale and separated. By \cite[Appendice A, Theorem A.2]{Laumon1999}
it will follow that it is quasi-affine.

Let $A$ be an $R$-algebra and $\xi\colon\Spec A\arr\LAlg_{R}^{G}$
be a map given by $\alA\in\LAlg^{G}A$. The fiber product $X\colon(\Sch/A)^{\op}\arr\sets$
of $\xi$ and $\ind_{H}^{G}$ is given by
\[
X(T)=\{(\alB,\psi)\st\alB\in\LAlg^{H}T\text{ and }\psi\colon\alA\otimes\odi T\simeq\ind_{H}^{G}\alB\}
\]
Notice that the datum $\psi$ can also be given as an $H$-equivariant
map $\alA\otimes\odi T\arr\alB$ which induces an isomorphism $\alA\otimes\odi T\arr\ind_{H}^{G}\alB$
via adjunction. In particular we obtain a map $X\arr\Hilb_{\Spec\alA/A}$
which is a monomorphism because if $(\alB,\psi)\in X$ then the action
of $H$ on $\alB$ is completely determined by the action of $H$
on $\alA$ and by $\psi$. Since $\Hilb_{\Spec\alA/R}$ and monomorphisms
are separated, it follows that $X$ is separated too.

Since $\LAlg_{R}^{H}$ and $\LAlg_{R}^{G}$ are locally of finite
presentation by \ref{prop:LAlg of finite presentation} so is $X\arr\Spec A$.
Thus in order to show that $X$ is étale over $A$ we can assume that
$A$ is an Artinian local ring and prove that, if $J$ is a square
zero ideal of $A$, then an object $(\alB',\psi')\in X(A/J)$ extends
uniquely to $X(A)$. The map $\Spec\alB'\arr\Spec\alA/J\alA$ induced
by $\psi'$ is an $H$-invariant open and closed subscheme of $\Spec\alA/J\alA$.
This gives an open and closed subscheme $\Spec\alB\subseteq\Spec\alA$.
This is also $H$-invariant: if $\gamma\colon\Spec\alB\times H\arr\Spec\alA$
is the restriction of the action, then $\gamma^{-1}(\Spec\alA-\Spec\alB)=\emptyset$
because it is empty after tensoring by $A/J$. Thus we have extended
the $H$-equivariant map
\[
\alA\otimes A/J\arrdi{\psi}\ind_{H}^{G}\alB'\arr\alB'
\]
to an $H$-equivariant map $\alA\arr\alB$ and it is also clear that
this extension is unique up to a unique isomorphism. Finally the map
$\alA\arr\ind_{H}^{G}\alB$ is an isomorphism because it is so after
tensoring by $A/J$.

It remains to characterize the image of $\ind_{H}^{G}$. Let $k$
be an algebraically closed field and $\alA\in\LAlg^{G}k$. Given $p\in\Spec\alA$
we denote by $H_{p}$ its geometric stabilizer and by $U_{p}$ the
stabilizer of $\Spec\alA_{p}$.

Assume that $\alA$ is in the image, that is $\alA\simeq\ind_{H}^{G}\alB$.
The conclusion follows applying \ref{lem:criterion ind isomorphism}
with $\widetilde{\alA}=\alB$. Conversely assume there is a set of
points $Z\subseteq\Spec\alA$ as in the statement. Set $X=\Spec\alA$
and $X_{p}=\Spec\alA_{p}$ for $p\in\Spec\alA$. We can assume that
the points of $Z$ are all in different orbits, that is
\[
X=\bigsqcup_{p\in Z}X_{p}G(k)
\]
By \ref{lem:criterion ind isomorphism} we have $U_{p}(k)=H_{p}(k)$
and therefore $U_{p}\subseteq H$. Moroever we also have
\[
\alA\simeq\prod_{p\in Z}\ind_{U_{p}}^{G}\alA_{p}\simeq\prod_{p\in Z}\ind_{H}^{G}(\ind_{U_{p}}^{H}\alA_{p})\simeq\ind_{H}^{G}(\prod_{p\in Z}\ind_{U_{p}}^{H}\alA_{p})
\]
 as required. 
\end{proof}
We conclude with the following results that will be used in the next
sections.
\begin{cor}
\label{cor:stabilizers and ind} Assume that $G$ is a constant group
and let $\alA\in\LAlg^{G}B$, where $B$ is an $R$-algebra, such
that $\alA^{G}=B$. If $H$ is the geometric stabilizer of a prime
ideal $p$ of $\alA$ lying over $q\in\Spec B$ then there exists
a an étale morphism $B\arr B'$, $q'\in\Spec B'$ over $q$, $\widetilde{\alA}\in\LAlg^{H}B'$
such that $\widetilde{\alA}{}^{H}=B'$and a $G$-equivariant isomorphism
\[
\alA\otimes_{B}B'\simeq\ind_{H}^{G}\widetilde{\alA}
\]
Moreover we can also assume that $\widetilde{\alA}\otimes\overline{k(q')}$
is local, its maximal ideal lies over $p\in\Spec\alA$ and has geometric
stabilizer equal to $H$ .\end{cor}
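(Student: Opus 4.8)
The plan is to reduce everything to the strictly Henselian situation, where the structure results of this section apply, and then to descend back to an \'etale neighborhood by finite presentation. First I would pass to the strict Henselization $B^{\mathrm{sh}}$ of $B$ at $q$, a filtered colimit of \'etale neighborhoods of $q$, whose residue field is the separable closure $k^{s}=k(q)^{s}$; since $G$ is constant, hence \'etale, this is exactly the field allowed in \ref{lem:criterion ind isomorphism} and \ref{lem:induction from an open sugroup and stabilizers}. Because $B\arr B^{\mathrm{sh}}$ is flat and taking $G$-invariants of a constant finite group is a finite limit (an equalizer), invariants commute with this base change, so $(\alA\otimes_{B}B^{\mathrm{sh}})^{G}=B^{\mathrm{sh}}$.

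The key observation is that over the \emph{local} ring $B^{\mathrm{sh}}$ the group $G$ acts transitively on the connected components of $\Spec(\alA\otimes_{B}B^{\mathrm{sh}})$. Indeed, by \ref{lem:connected components of finite group schemes over Henselian rings} these components are cut out by orthogonal idempotents which $G$ permutes, and the sum of the idempotents in any $G$-orbit is a $G$-invariant idempotent; hence it lies in $(\alA\otimes B^{\mathrm{sh}})^{G}=B^{\mathrm{sh}}$ and must be $0$ or $1$ since $B^{\mathrm{sh}}$ is local. Let $\widetilde{\alA}^{\mathrm{sh}}$ be the connected component corresponding to a point lying over $p$ and let $U_{p}$ be its stabilizer. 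By \ref{lem:induction from an open sugroup and stabilizers} the geometric stabilizer $H_{p}$ and $U_{p}$ are topologically equal, so $U_{p}=H_{p}=H$ because $G$ is constant; transitivity then yields a $G$-equivariant isomorphism $\ind_{H}^{G}\widetilde{\alA}^{\mathrm{sh}}\simeq\alA\otimes B^{\mathrm{sh}}$. Frobenius reciprocity (the adjunction for $\ind_{H}^{G}$) gives $(\widetilde{\alA}^{\mathrm{sh}})^{H}=(\ind_{H}^{G}\widetilde{\alA}^{\mathrm{sh}})^{G}=(\alA\otimes B^{\mathrm{sh}})^{G}=B^{\mathrm{sh}}$.

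To descend I would invoke that $\ind_{H}^{G}\colon\LAlg_{R}^{H}\arr\LAlg_{R}^{G}$ is \'etale with the explicit image described in \ref{thm:ind is etale}. The lift constructed over $B^{\mathrm{sh}}$ is precisely a $B^{\mathrm{sh}}$-point of the fiber product of $\alA\colon\Spec B\arr\LAlg_{R}^{G}$ with $\ind_{H}^{G}$, which by that theorem is \'etale and separated over $\Spec B$; since $B^{\mathrm{sh}}$ is a colimit of \'etale neighborhoods of $q$ and the stacks involved are of finite presentation (\ref{prop:LAlg of finite presentation}), this point factors through an \'etale neighborhood $B\arr B'$ with a point $q'$ over $q$, providing $\widetilde{\alA}\in\LAlg^{H}B'$, the isomorphism $\alA\otimes_{B}B'\simeq\ind_{H}^{G}\widetilde{\alA}$, and the equality $\widetilde{\alA}^{H}=B'$. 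For the final assertion, the chosen component $\widetilde{\alA}^{\mathrm{sh}}$ is local with residue field purely inseparable over $k^{s}$, so it stays local after $-\otimes\overline{k(q')}$; by construction its maximal ideal lies over $p$ and has geometric stabilizer $H$.

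The main obstacle is the transitivity step: the hypothesis $\alA^{G}=B$ only controls invariants over $B$ itself, and invariants need not commute with the non-flat base change to a geometric point, so one genuinely must move to the flat base $B^{\mathrm{sh}}$ before the idempotent argument applies. The secondary point requiring care is bookkeeping: ensuring that the connected component selected over $B^{\mathrm{sh}}$ is the one lying over $p$, that it descends compatibly, and that it remains local after passage to $\overline{k(q')}$.
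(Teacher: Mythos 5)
Your proof is correct and follows essentially the same route as the paper: transitivity of the $G$-action on the geometric fibre deduced from $\alA^{G}=B$, then \ref{lem:induction from an open sugroup and stabilizers} to produce the induction isomorphism over a strictly Henselian base, and finally the \'etaleness of $\ind_{H}^{G}$ from \ref{thm:ind is etale} to descend to an \'etale neighbourhood, with Frobenius reciprocity giving $\widetilde{\alA}^{H}=B'$. The only cosmetic difference is that you establish transitivity over $B^{\mathrm{sh}}$ by the idempotent argument, whereas the paper passes through a finite separable extension of $k(q)$ and a flat local $B$-algebra realizing it as residue field.
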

\begin{proof}
We are going to prove that $G(\overline{k(q)})$ acts transitively
on $\Spec(\alA\otimes\overline{k(q)})$. Using \ref{cor:base change of local ring is local for strictly henselian ring},
we can find a separable finite extension $L/k$ such that $\Spec(\alA\otimes\overline{k(q)})\arr\Spec(\alA\otimes L)$
is bijective. Moreover there exists a flat and local $B$-algebra
$B'$ with residue field $L$. Since $(\alA\otimes B')^{G}=B'$, by
standard arguments it follows that $G$ (as constant group) acts transitively
on the set of maximal ideals of $\alA\otimes B'$ and thus on $\Spec(\alA\otimes L)$
as required. Now let $\overline{p}\in\Spec(\alA\otimes\overline{k(q)})$
lying over $p\in\Spec\alA$. Since $G$ is constant, the geometric
stabilizer $H$ of $p$ (that is of $\overline{p}$) coincides with
the stabilizer of the connected component $\Spec((\alA\otimes\overline{k(q)})_{\overline{p}})$
and, if we set $\overline{\alB}=(\alA\otimes\overline{k(q)})_{\overline{p}}$,
by \ref{lem:induction from an open sugroup and stabilizers} we get
an isomorphism
\[
\alA\otimes\overline{k(q)}\simeq\ind_{H}^{G}\overline{\alB}
\]
Since $\ind_{H}^{G}\colon\LAlg_{R}^{H}\arr\LAlg_{R}^{G}$ is étale,
there exists an étale morphism $\Spec B'\arr\Spec B$, $q'\in\Spec B'$
over $q$, $\alB\in\LAlg^{H}B'$ such that $\alA\otimes B'\simeq\ind_{H}^{G}\alB$
and $\alB\otimes\overline{k(q')}\simeq\overline{\alB}$. Moreover
we have isomorphisms
\[
B'\simeq(\alA\otimes B')^{G}\simeq(\ind_{H}^{G}\alB)^{G}\simeq\alB^{H}
\]
Thus $\widetilde{\alA}=\alB$ satisfies the desired conditions. \end{proof}
\begin{lem}
\label{lem:induction is restriction} Let $H$ be an open and closed
subgroup of $G$, $T$ an $R$-scheme and $\shF\in\QAlg^{H}T$. Then
\[
\Omega^{\ind_{H}^{G}\shF}\simeq\Omega^{\shF}\circ\R_{H}\colon\Loc^{G}R\arr\QCoh T
\]
where $\R_{H}\colon\Loc^{G}R\arr\Loc^{H}R$ is the restriction.\end{lem}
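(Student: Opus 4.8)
The plan is to deduce the claim from two standard facts about induction of comodules, namely the adjunction (Frobenius reciprocity) between $\R_H$ and $\ind_H^G$ and the tensor identity (projection formula), both of which hold for an arbitrary subgroup scheme and require no linear reductivity hypothesis (see \cite[Part I, 3.4, 3.6]{Jantzen2003}). The statement is local on $T$, so I would assume $T=\Spec A$ is affine and regard $\shF$ as an $A$-module carrying an $H$-comodule structure. Unwinding the definitions, for $V\in\Loc^G R$ one has $\Omega^{\ind_H^G\shF}_V=(\ind_H^G\shF\otimes V)^G$ and $(\Omega^{\shF}\circ\R_H)_V=(\shF\otimes\R_H V)^H$, so the task is to produce a natural isomorphism $(\ind_H^G\shF\otimes V)^G\simeq(\shF\otimes\R_H V)^H$.

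First I would apply the tensor identity. Since $V$ is locally free of finite rank (hence flat over $R$), the canonical comparison map is an isomorphism of $G$-comodules
\[
\ind_H^G(\shF\otimes\R_H V)\simeq(\ind_H^G\shF)\otimes V,
\]
and it is natural in $V$. Taking $G$-invariants therefore gives $(\ind_H^G\shF\otimes V)^G\simeq(\ind_H^G(\shF\otimes\R_H V))^G$, again naturally in $V$. Note that it is essential to pass through induction here: one cannot simply move $\otimes V$ across the $H$-invariants because $V$ carries a nontrivial $G$-action, and this is exactly what the projection formula accounts for.

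Next I would compute the $G$-invariants of an induced comodule by rewriting invariants as homomorphisms out of the trivial representation and invoking Frobenius reciprocity: for any $N\in\QCoh^H T$,
\[
(\ind_H^G N)^G=\Hom^G(R,\ind_H^G N)\simeq\Hom^H(\R_H R,N)=\Hom^H(R,N)=N^H,
\]
where I use that the trivial $G$-comodule restricts to the trivial $H$-comodule, $\R_H R=R$. Applying this with $N=\shF\otimes\R_H V$ and composing with the isomorphism of the previous step yields the desired identification $\Omega^{\ind_H^G\shF}_V\simeq\Omega^{\shF}_{\R_H V}$.

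The only real work is bookkeeping: I would check that each of the two displayed isomorphisms is natural in $V$ — the tensor identity is natural in both arguments and Frobenius reciprocity is natural in $N$ — so that their composite is an isomorphism of functors $\Loc^G R\arr\QCoh T$, as claimed. I expect the main subtlety, should one want more than a bare isomorphism of $R$-linear functors, to be verifying that this identification is compatible with the monoidal structures (the one on $\Omega^{\shF}\circ\R_H$ coming from the algebra $\shF$ together with the strong monoidal restriction $\R_H$, the one on $\Omega^{\ind_H^G\shF}$ from the induced algebra structure). This can be settled either by tracing the multiplication maps through the two isomorphisms above, or, more cheaply, by noting that fppf locally $G$ splits as a disjoint union of right cosets of $H$, whence $\ind_H^G\shF\simeq\shF^{\shR}$ as in the proof of \ref{lem:criterion ind isomorphism}, which renders both sides and the monoidal comparison completely explicit.
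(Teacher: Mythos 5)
Your argument is correct, and it rests on the same engine as the paper's proof, namely Frobenius reciprocity for the adjunction $(\R_{H},\ind_{H}^{G})$; the difference is only in packaging. The paper first rewrites $\Omega_{V}^{\shA}=(\shA\otimes V)^{G}$ as $\Hom^{G}(\duale V,\shA)$ and then applies the adjunction in a single step,
\[
\Hom^{G}(\duale V,\ind_{H}^{G}\shF)\simeq\Hom^{H}(\R_{H}(V)^{\vee},\shF)=\Omega_{\R_{H}(V)}^{\shF},
\]
which needs no tensor identity at all. You instead apply the tensor identity to move $V$ inside the induction and then use the adjunction only against the trivial comodule, i.e.\ $(\ind_{H}^{G}N)^{G}\simeq N^{H}$. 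The two routes are interderivable (the adjunction for $\duale V$ is exactly the trivial-module adjunction combined with the projection formula), both are natural in $V$, and neither uses linear reductivity, so your proof is complete; the paper's version is just shorter because it skips the intermediate object $\ind_{H}^{G}(\shF\otimes\R_{H}V)$. Your closing remark on monoidality is a genuine extra: the lemma as stated, and its only use in the paper (comparing ranks in the reducibility argument), require only an isomorphism of $R$-linear functors, so that verification is optional, though your suggestion of checking it after splitting $G$ into right cosets of $H$ fppf locally would indeed settle it.
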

\begin{proof}
Given $V\in\Loc^{G}R$ we have
\[
\Omega_{V}^{\ind_{H}^{G}\shF}=\Hom^{G}(\duale V,\ind_{H}^{G}\shF)\simeq\Hom^{H}(\R_{H}(V)^{\vee},\shF)=\Omega_{\R_{H}(V)}^{\shF}
\]

\end{proof}

\section{Reducibility of $\protect\GCov$ for nonabelian linearly reductive
groups.}

The aim of this section is to prove the reducibility of $\GCov$ when
$G$ is a nonabelian linearly reductive group, that is Theorem \ref{B}.
We fix a base ring $R$ and a finite, flat, finitely presented and
linearly reductive group scheme $G$ over $R$.
\begin{defn}
\label{def:universally reducible}Let $S$ be a scheme and $\stX$
be an algebraic stack over $S$. The stack $\stX$ is called \emph{universally
reducible} over $S$ if, for all base changes $S'\arr S$, the stack
$\stX\times_{S}S'$ is reducible. \end{defn}
\begin{rem}
\label{rem:universally reducible over fibers}It is easy to check
that $\stX$ is universally reducible over $S$ if and only if for
all fields $k$ and maps $\Spec k\arr S$ the fiber is reducible.
\end{rem}
We start by stating the generalization of Theorem \ref{B} we are
going to prove at the end of this section.
\begin{thm}
\label{thm:GCov reducible when G not abelian}If $G$ is a finite,
flat and finitely presented nonabelian and linearly reductive group
scheme over $R$ then $\GCov$ is reducible. If, moreover, $G$ is
defined over a connected scheme, then $\GCov$ is also universally
reducible.
\end{thm}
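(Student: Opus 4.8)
The plan is to exhibit two distinct irreducible components of $\GCov$: the closure of the torsor locus, and the component of a single explicit ``totally ramified'' cover whose geometric stabilizer is all of $G$. Since $\Bi G\subseteq\GCov$ is open (Definition \ref{def: G cover}) and irreducible (it is a classifying stack over a field), its closure $Z_{1}=\overline{\Bi G}$ is automatically an irreducible component. I would first carry out the reductions. By Remark \ref{rem:universally reducible over fibers} the universal statement is equivalent to reducibility of every fibre over a field, and reducibility over a general $R$ follows from reducibility of a single geometric fibre at a point where $G$ is nonabelian; so I may assume $R=k$ is an algebraically closed field. A finite linearly reductive group scheme over $k$ is fppf-locally, hence here, a semidirect product of a diagonalizable group by a constant group of order prime to $\car k$ (\cite{Abramovich2007}); over $k$ algebraically closed this is a constant finite group, so the nonabelian hypothesis makes $G$ noncyclic with $|G|$ invertible in $k$.

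Next I construct the second point. Write $k[G]=k\oplus\mathfrak a$ as a $G$-comodule, where $\mathfrak a$ is the unique complement of the trivial summand, and endow $\alA_{0}=k\oplus\mathfrak a$ with the square-zero multiplication $\mathfrak a\cdot\mathfrak a=0$. This is a commutative $G$-algebra with $\alA_{0}\simeq k[G]$ as a comodule, so $\alA_{0}\in\GCov(k)$ by Definition \ref{def: G cover}; it is local (connected) with $\alA_{0}^{G}=k$, and its unique point is $G$-fixed, so its geometric stabilizer equals $G$, which is noncyclic.

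The crux is to show $\alA_{0}\notin Z_{1}$. Suppose it were. Since $\Bi G$ is dense in $Z_{1}$ and $\GCov$ is of finite type over $k$ (Proposition \ref{prop:LAlg of finite presentation}), I can find a trait $\Spec\mathcal O\arr\GCov$, with $\mathcal O$ a discrete valuation ring with algebraically closed residue field, whose closed point maps to $\alA_{0}$ and whose generic point lands in $\Bi G$; concretely a cover $\alA\in\LAlg^{G}\mathcal O$ that is generically a $G$-torsor and has special fibre $\alA_{0}$. Because $\alA_{0}$ is connected with stabilizer $G$, the inertia of the special point is all of $G$, so Corollary \ref{cor:stabilizers and ind} and Lemma \ref{lem:induction from an open sugroup and stabilizers} (resting on Theorem \ref{thm:ind is etale}) give, étale-locally on the base, $\alA\simeq\ind_{G}^{G}\alB=\alB$ with $\alB$ connected, finite and flat over $\mathcal O$ and generically a $G$-torsor. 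As the residue field is algebraically closed, connectedness forces total ramification, so the generic fibre of $\alB$ is a field, Galois over the fraction field of $\mathcal O$ with group $G$ and totally ramified; since $|G|$ is prime to the residue characteristic this extension is tame, and a tamely totally ramified Galois extension has cyclic Galois group. Hence $G$ would be cyclic, contradicting the nonabelian hypothesis. Therefore $\alA_{0}$ lies in a component $Z_{2}\neq Z_{1}$ and $\GCov$ is reducible.

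Finally, for the universal statement over a connected $R$ the local structure of linearly reductive groups is fppf-locally constant, so all geometric fibres of $G$ are isomorphic to one fixed noncyclic constant group; the argument above applies to each fibre, and Remark \ref{rem:universally reducible over fibers} yields universal reducibility. The main obstacle is the crux paragraph: producing the trait inside $\GCov$ and, above all, reducing the local structure of a degeneration of torsors to its inertia subgroup via the induction results, so that the classical fact ``tame $+$ totally ramified $\Rightarrow$ cyclic'' can be invoked. This is precisely where the noncyclicity of a nonabelian $G$ — rather than a dimension count — produces the obstruction, which is why the method cannot reprove the diagonalizable case, where the relevant stabilizers are cyclic.
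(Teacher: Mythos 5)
There are genuine gaps in the crux of your argument. First, your reduction to a \emph{constant} group is false in positive characteristic: over an algebraically closed field $k$ a finite linearly reductive group scheme is a semidirect product $\Delta\rtimes H$ with $\Delta$ diagonalizable and $H$ constant of order prime to $\car k$, and $\Delta$ may contain infinitesimal factors $\mu_{p^{n}}$ with $p=\car k$ (e.g.\ $G=\mu_{p}\rtimes\Z/2\Z$ acting by inversion, $p$ odd, is nonabelian, linearly reductive and not \'etale). For such $G$ the entire Galois-theoretic machinery you invoke (traits, inertia groups, ``tame totally ramified $\Rightarrow$ cyclic'') has no meaning, so your proof at best covers characteristic zero. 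Second, even for constant $G$ the key step ``connectedness forces total ramification, so the generic fibre of $\alB$ is a field'' is unjustified and false: a finite flat \emph{local} algebra over a DVR need not have connected generic fibre, because it need not be normal. For instance, taking $[G:H]$ copies of a totally ramified $H$-cover $\alB_{0}$ of $\C[[t]]$ and gluing them along their closed points produces a local, finite flat, generically-torsor $G$-algebra whose generic fibre is a product of $[G:H]$ fields; the stabilizer of a minimal prime is $H$, not $G$, and only the inertia of the \emph{normalization} is forced to be cyclic. Hence your argument does not show that the square-zero algebra $\alA_{0}=k\oplus\mathfrak{a}$ lies outside $\overline{\Bi G}$, and this is precisely the hard point. (A smaller issue: reducibility of a single fibre does not imply reducibility of the total space; the paper instead uses that the abelian locus of the base is open and closed, so that over a connected base every fibre has nonabelian $G$.)

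The paper's route is quite different and avoids ramification theory entirely. It reduces (via Miller's theorem on groups all of whose proper subgroups are abelian) to a surjection $G\arr\Z/p\Z$ with diagonalizable kernel $H$, constructs a square-zero algebra $B\in\LAlg^{H}k$ whose rank function $f\colon I_{H}\arr\N$ is \emph{not} the regular one (so $B\notin\HCov$) but whose induced rank function on $\Loc^{G}k$ \emph{is} regular (so $\ind_{H}^{G}B\in\GCov$ by Theorem~A), and then uses that $\ind_{H}^{G}$ is \'etale (Theorem \ref{thm:ind is etale}), hence preserves schematic closures, so $\ind_{H}^{G}B\in\stZ_{G}$ would force $B\in\stZ_{H}\subseteq\HCov$, a contradiction (Lemma \ref{prop:ind B in Z if B in Z}). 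The obstruction is thus a purely representation-theoretic rank count on a proper subgroup, not the cyclicity of tame inertia; if you want to pursue your approach you would need both a separate treatment of the non-\'etale case and a genuinely new argument ruling out non-normal degenerations with small generic stabilizer.
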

Note that, if we do not assume that the base $\Spec R$ is connected,
we can not conclude that $\GCov$ is universally reducible, since
one can always take $G$ as disjoint union of $\mu_{2}$ and $S_{3}$
over $\Spec\Q\sqcup\Spec\Q$. On the other hand what happens when
the base is not connected is clear from the following Proposition.
\begin{prop}
\label{prop:the locus where G abelian open and closed}The locus of
$\Spec R$ where $G$ is abelian is open and closed in $\Spec R$.\end{prop}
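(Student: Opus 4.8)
The plan is to reformulate commutativity of the fibres as the triviality of a single $G$-representation---the coordinate algebra of $G$ equipped with its conjugation action---and then use linear reductivity to turn ``the action is trivial on the fibre'' into an equality of ranks of two \emph{locally free} sheaves, which is automatically an open and closed condition.

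First I would introduce the conjugation action $\textup{conj}\colon G\times_R G\arr G$, $(g,x)\longmapsto gxg^{-1}$, and note that the induced coaction makes $A:=\odi G$ an object $V$ of $\Loc^G R$: indeed $A$ is finite locally free over $R$ since $G$ is finite, flat and finitely presented. For a point $s\in\Spec R$ with residue field $k(s)$, set $G_s=G\times_R k(s)$. The key observation is that $G_s$ is commutative if and only if the conjugation morphism on $G_s$ coincides with the second projection $G_s\times G_s\arr G_s$, i.e. if and only if $G_s$ acts trivially on $V_s=A\otimes k(s)$. I would stress that this is the correct \emph{scheme-theoretic} condition and not merely a condition on points: a nonabelian finite group scheme can have an abelian group of geometric points (for instance $\mu_p\rtimes\Z/\ell$ with $\ell\mid p-1$ in characteristic $p$), and passing to the comodule $V$ is precisely what records the infinitesimal data one would otherwise lose.

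Next I would form the inclusion $V^G\hookrightarrow V$. Because $G$ is linearly reductive and $V$ is locally free of finite rank, Proposition \ref{prop:properties linearly reductive groups}, part $2)$, guarantees that $V^G$ is again locally free of finite rank (the inclusion even locally splits). By part $1)$ of the same proposition, taking invariants commutes with the base change $R\arr k(s)$, so $V^G\otimes k(s)\simeq(V\otimes k(s))^{G_s}=V_s^{G_s}$. Hence $G_s$ acts trivially on $V_s$---equivalently $G_s$ is abelian---exactly when $\rk_s V^G=\rk_s V$.

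Finally, since $V$ and $V^G$ are both locally free over $R$, the functions $s\longmapsto\rk_s V$ and $s\longmapsto\rk_s V^G$ are locally constant on $\Spec R$, so the locus where they agree is open and closed; this is exactly the abelian locus, which finishes the proof. I expect the only delicate point to be the very first step, namely phrasing commutativity as triviality of the conjugation comodule rather than as a condition on geometric points, so that the clopen conclusion genuinely detects infinitesimal non-commutativity. Once that reformulation is in place, linear reductivity does all the work by forcing both $V$ and $V^G$ to be locally free.
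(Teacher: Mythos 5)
Your proof is correct, but it takes a genuinely different route from the paper's. The paper argues in two separate steps: closedness follows because the locus where the two multiplication maps $G\times G\arr G$, $(g,h)\longmapsto gh$ and $(g,h)\longmapsto hg$, agree is closed ($G$ being flat and proper), while openness is deduced from the structure theorem of Abramovich--Olsson--Vistoli, writing $G$ fppf-locally as a well-split group $\Delta\ltimes H$ and checking by hand that abelianness of one geometric fibre forces the semidirect product to be direct. You instead encode commutativity of a fibre as triviality of the conjugation comodule $\odi G\in\Loc^{G}R$ and reduce the whole statement to the equality of the two locally constant functions $s\longmapsto\rk_{s}V$ and $s\longmapsto\rk_{s}V^{G}$; linear reductivity (the standing hypothesis of that section) enters only through the elementary properties of invariants --- local freeness of $V^{G}$ and compatibility with base change --- rather than through the classification of well-split groups. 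This gives openness and closedness in one stroke and is more self-contained. The paper's openness argument has the small additional payoff of exhibiting $G$ as honestly abelian (as a group scheme) over a neighbourhood of any point with abelian fibre, which is implicitly what the application in the reducibility theorem uses; but your argument recovers this as well, since over the clopen locus the locally split inclusion $V^{G}\hookrightarrow V$ is an injection of locally free sheaves of equal rank and hence an isomorphism, so the conjugation coaction is trivial there, not merely trivial fibre by fibre. Your cautionary example $\mu_{p}\rtimes\Z/\ell\Z$ in characteristic $p$ is apt: it is exactly the kind of well-split, linearly reductive, nonabelian group with abelian geometric points that a naive pointwise test would miss.
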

\begin{proof}
Denote by $Z$ this locus and set $S=\Spec R$. Topologically, $|Z|$
is closed in $S$, because it is the locus where the maps $G\times G\arr G$
given by $(g,h)\longmapsto gh$ and $(g,h)\longmapsto hg$ coincide
and $G$ is flat and proper. We have to prove that, given an algebraically
closed field $k$ and a map $\Spec k\arrdi pS$ such that $G_{k}=G\times k$
is abelian, there exists a fppf neighborhood of $S$ around $p$ where
$G$ is abelian. By \cite[Theorem 2.19]{Abramovich2007}, we can assume
that $G=\Delta\ltimes H$, where $\Delta$ is diagonalizable and $H$
is constant. If $G_{k}$ is abelian, then $H$ is abelian, the map
$H\arr\Aut\Delta\simeq\Aut(\Hom(\Delta,\Gm))^{op}$ is trivial and
therefore $G\simeq\Delta\times H$ is abelian.\end{proof}
\begin{defn}
\label{def:schematic closure} We say that an open substack $\stU$
of an algebraic stack $\stX$ is \emph{schematically dense} if $\stX$
is the only closed substack of $\stX$ containing $\stU$. If $\stU$
is a quasi-compact open substack of $\stX$ its schematic closure
is the minimum of the closed substacks of $\stX$ containing $\stU$
or, alternatively, the (unique) closed substack $\stZ$ of $\stX$
such that $\stU\subseteq\stZ$ and $\stU$ is schematically dense
in $\stZ$.

We denote by $\stZ_{G}$ the schematic closure of $\Bi G$ inside
$\GCov$ and we call it the main irreducible component of $\GCov$.
\end{defn}
The existence of the schematic closure as stated above and the fact
that it is stable by flat base changes follows from \cite[Theorem 11.10.5]{EGAIV-3}.
Although we have called $\stZ_{G}$ the main irreducible component
of $\GCov$, the stack $\stZ_{G}$ is irreducible if and only if $\Spec R$
is irreducible, because this is the only case in which $\Bi G$ is
irreducible.
\begin{lem}
\label{prop:ind B in Z if B in Z} Let $H$ be an open and closed
subgroup scheme of $G$ and $\alB\in\LAlg_{R}^{H}$. Then 
\[
\ind_{H}^{G}\alB\in\Bi G\iff\alB\in\Bi H\comma\ind_{H}^{G}\alB\in\stZ_{G}\iff\alB\in\stZ_{H}
\]
\end{lem}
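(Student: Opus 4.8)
The plan is to deduce both equivalences from the single identity $(\ind_H^G)^{-1}(\Bi G)=\Bi H$ of open substacks of $\LAlg_R^H$, using the étaleness of $\ind_H^G$ established in Theorem \ref{thm:ind is etale} together with the fact that the formation of a schematic closure commutes with flat pullback. In this way the first biconditional is literally the statement $(\ind_H^G)^{-1}(\Bi G)=\Bi H$, and the second will follow formally by pulling back $\stZ_G$ along the flat morphism $\ind_H^G$.

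First I would establish the equality $(\ind_H^G)^{-1}(\Bi G)=\Bi H$. Both sides are open substacks of $\LAlg_R^H$: the right-hand side by definition, and the left-hand side because $\Bi G$ is open in $\GCov$, hence in $\LAlg_R^G$, while $\ind_H^G$ is étale. Since open substacks of an algebraic stack are determined by their underlying point-sets, it suffices to compare geometric points. Over a geometric point $\Spec k$, after the (fppf, or étale if $G/R$ is étale) cover splitting $G$ into the right cosets of $H$ used in the proof of Theorem \ref{thm:ind is etale}, one has the explicit description $\ind_H^G\alB\simeq\alB^{\shR}$ in which $G(k)$ permutes the $[G:H]$ factors through the coset action while $H(k)$ acts within the distinguished factor. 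From this $\ind_H^G\alB$ is the regular representation of $G$ exactly when each factor $\alB$ is the regular representation of $H$: a free transitive $G(k)$-action on $\Spec(\ind_H^G\alB)$ induces, on the factor stabilized by $H$, a free $H(k)$-action on a set with $\rk H$ elements, which is automatically transitive; conversely the induction of an $H$-torsor is a $G$-torsor. This yields the pointwise equality, hence the first equivalence.

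For the second equivalence I would regard $\ind_H^G$ as a flat (indeed étale) and representable morphism $\LAlg_R^H\arr\LAlg_R^G$ and pull back the schematic closure $\stZ_G$ of $\Bi G$. The same splitting shows, on geometric points, that $(\ind_H^G)^{-1}(\GCov)=\HCov$, so $\ind_H^G$ restricts to a flat morphism $\HCov\arr\GCov$ sending $\Bi H$ isomorphically onto $(\ind_H^G)^{-1}(\Bi G)$. Because the schematic closure of a quasi-compact open substack is stable under flat base change (\cite[Theorem 11.10.5]{EGAIV-3}), the pullback $(\ind_H^G)^{-1}(\stZ_G)$ is the schematic closure inside $\HCov$ of $(\ind_H^G)^{-1}(\Bi G)=\Bi H$, which is precisely $\stZ_H$. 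Reading off membership gives $\ind_H^G\alB\in\stZ_G\iff\alB\in\stZ_H$, with both directions obtained at once.

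The hard part will be the passage from the scheme-theoretic statement of \cite[Theorem 11.10.5]{EGAIV-3} to our stacky situation: I must verify that flat pullback commutes with schematic closure for the representable étale morphism $\ind_H^G$, which I would do by choosing a smooth atlas of $\GCov$ and reducing to schemes, after checking that the relevant quasi-compactness hypotheses hold (they do, since $\Bi G$ and $\Bi H$ are quasi-compact and $\ind_H^G$ is quasi-affine, so the schematic closures exist and are preserved). All other ingredients, namely the explicit splitting $\ind_H^G\alB\simeq\alB^{\shR}$, the étaleness of $\ind_H^G$, and the identification of its image, are already furnished by Theorem \ref{thm:ind is etale}.
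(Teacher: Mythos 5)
Your overall architecture is the right one and, for the second equivalence, essentially the paper's: the paper also deduces $\ind_{H}^{G}\alB\in\stZ_{G}\iff\alB\in\stZ_{H}$ by observing that $\stZ_{G}$ (resp.\ $\stZ_{H}$) is the schematic closure of $\Bi G$ (resp.\ $\Bi H$) inside $\LAlg_{R}^{G}$ (resp.\ $\LAlg_{R}^{H}$), since $\GCov$ and $\HCov$ are closed there by Theorem A, and then pulling back along the \'etale map $\ind_{H}^{G}$ using \cite[Theorem 11.10.5]{EGAIV-3}. (Your intermediate claim $(\ind_{H}^{G})^{-1}(\GCov)=\HCov$ is asserted rather than proved, but it is not needed: it suffices to pull back the closure of $\Bi G$ in all of $\LAlg_{R}^{G}$.)

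The genuine gap is in your proof of the implication $\ind_{H}^{G}\alB\in\Bi G\then\alB\in\Bi H$. The reduction to geometric points is legitimate, since both $(\ind_{H}^{G})^{-1}(\Bi G)$ and $\Bi H$ are open substacks of $\LAlg_{R}^{H}$. But over the geometric point you identify ``$Q$ is a $G$-torsor over $\overline{k}$'' with ``$G(\overline{k})$ acts freely and transitively on the point set of $Q$,'' and you assert that $\Spec\alB$ then has $\rk H$ points. Both steps are only valid when $G$ (hence $H$) is \'etale. This lemma lives in Section 3, where $G$ is an arbitrary finite flat linearly reductive group scheme, so infinitesimal pieces genuinely occur: for $G=H=\mu_{p}$ over an algebraically closed field $k$ of characteristic $p$, the algebra $k[x]/(x^{p})$ with $\deg x=1$ has a one-point spectrum on which the trivial group $\mu_{p}(k)$ acts freely and transitively, yet it is not a $\mu_{p}$-torsor; your criterion cannot tell it apart from the torsor $k[\mu_{p}]$. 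So the counting argument (``a free $H(k)$-action on a set with $\rk H$ elements is automatically transitive'') does not establish the claim except in the \'etale case, which is not the generality in which the lemma is used (e.g.\ in the proof of Theorem B over fields of positive characteristic). The paper circumvents this with a purely torsor-theoretic argument that never touches point sets: writing $P=\Spec\alB$ and $Q=\Spec(\ind_{H}^{G}\alB)=(P\times G)/H$, the canonical map $P\arr Q$, $p\mapsto(p,1)$, is an $H$-equivariant monomorphism; if $G$ acts freely on $Q$ then $H$ acts freely on $P$, so $P/H=[P/H]$, and $P/H\simeq Q/G\simeq T$ exhibits $P$ as an $H$-torsor. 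You would need to replace your geometric-point count by an argument of this kind (or otherwise handle the infinitesimal part of $G$) for the first equivalence to hold in the stated generality.
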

\begin{proof}
The fact that $\alB\in\Bi H\then\ind_{H}^{G}\alB\in BG$ is well known.
For the converse set $P=\Spec\alB$ and consider it as a sheaf of
sets over $\Sch/T$ with a right action of $H$, where $T$ is the
$R$-scheme over which $\alB$ is defined. Then $Q=\Spec(\ind_{H}^{G}\alB)$
is by definition $(P\times G)/H$, where the $H$ action on $P\times G$
is given by $(p,g)h=(ph,h^{-1}g)$ and the $G$-action is on the right.
It is easy to check that the natural map $P\arr Q$, $p\longmapsto(p,1)$
is an $H$-equivariant monomorphism. Assume that $Q$ is a $G$-torsor.
It follows that $H$ acts freely on $P$, so that sheaf quotient $P/H$
and stack quotient $[P/H${]} coincide. Moreover $P/H\arr Q/G$ is
an isomorphism, so that $P/H\simeq Q/G\simeq T$ because $Q$ is a
$G$-torsor. In conclusion $P\arr[P/H]\simeq T$ is an $H$-torsor.

Since $\HCov$ (resp. $\GCov$) is closed in $\LAlg_{R}^{H}$ (resp.
$\LAlg_{R}^{G}$) by Theorem \ref{A}, it follows that $\stZ_{H}$
(resp. $\stZ_{G}$) is the schematic closure of $\Bi H$ (resp. $\Bi G$)
inside $\LAlg_{R}^{H}$ (resp. $\LAlg_{R}^{G}$). The second equivalence
therefore follows because flat maps preserve schematic closures and
$\ind_{H}^{G}\colon\LAlg_{R}^{H}\arr\LAlg_{R}^{G}$ is étale by \ref{thm:ind is etale}.\end{proof}
\begin{defn}
Assume that $G$ is a good linearly reductive group and that $\Spec R$
is connected. Given a scheme $T$, we will say that a functor $\Omega\colon\Loc^{G}R\arr\Loc T$
(a sheaf of algebras $\alA\in\LAlg^{G}T)$ has \emph{equivariant constant
rank }(or is of equivariant constant rank) if for all $V\in\Loc^{G}R$
the locally free sheaf $\Omega_{V}$ ($\Omega_{V}^{\alA}=(V\otimes\alA)^{G}$)
has constant rank. In this case we define the rank function $\rk^{\Omega}\colon I_{G}\arr\N$
($\rk^{\alA}\colon I_{G}\arr\N$) as
\[
\rk_{V}^{\Omega}=\rk\Omega_{V}\comma(\rk_{V}^{\alA}=\rk_{V}^{\Omega^{\alA}}=\rk(V\otimes\alA)^{G})
\]
Given $f\colon I_{G}\arr\N$ we will still call $f$ the extension
$f\colon\Loc^{G}R\arr\N$ given by
\[
f_{U}=\sum_{V\in I_{G}}\rk(\Hom^{G}(V,U))f_{V}
\]
so that if $\Omega\colon\Loc^{G}R\arr\Loc T$ is an $R$-linear functor
then $\rk_{V}^{\Omega}=\rk\Omega_{V}$ for all $V\in\Loc^{G}R$.\end{defn}
\begin{lem}
\label{lem:solvable if all subgroups are abelian}\cite{Miller1903}
A constant group whose proper subgroups are abelian is solvable.%

\end{lem}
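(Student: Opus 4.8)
The plan is to argue by contradiction: suppose the claim fails and let $G$ be a non-solvable group of minimal order all of whose proper subgroups are abelian. I will derive a contradiction by analysing the centralizers of nonidentity elements.

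First I would reduce to the case that $G$ is simple. If $N$ were a nontrivial proper normal subgroup, then $N$ is abelian (being a proper subgroup) and every proper subgroup of $G/N$ is abelian, since it has the form $H/N$ with $N\le H$ and $H\neq G$, and such an $H$ is a proper, hence abelian, subgroup of $G$. By minimality $G/N$ is solvable, and then $G$ is solvable as an extension of the solvable group $G/N$ by the abelian group $N$, a contradiction. Hence $G$ is simple; it is non-abelian because it is non-solvable, so $Z(G)=1$.

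The key structural step is to understand the maximal abelian subgroups of $G$. For $1\neq u\in G$ the centralizer $C_G(u)$ is proper (as $u\notin Z(G)$), hence abelian, and it is a \emph{maximal} abelian subgroup because any abelian subgroup containing $u$ lies inside $C_G(u)$. I would then prove that two centralizers meeting nontrivially coincide: if $1\neq w\in C_G(u)\cap C_G(v)$, then $w\in C_G(u)$ with $C_G(u)$ abelian gives $C_G(u)\subseteq C_G(w)$, while $u\in C_G(w)$ with $C_G(w)$ abelian gives $C_G(w)\subseteq C_G(u)$, so $C_G(u)=C_G(w)$, and symmetrically $C_G(v)=C_G(w)$. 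Thus the maximal abelian subgroups of $G$ partition $G\setminus\{1\}$ into pairwise trivially-intersecting pieces. Moreover each maximal abelian subgroup $A$ is self-normalizing: since $G$ is simple and $A\neq 1,G$, the subgroup $N_G(A)$ is proper, hence abelian, and maximality of $A$ forces $N_G(A)=A$.

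Finally I would run the counting argument. Grouping the maximal abelian subgroups into conjugacy classes with representatives $H_1,\dots,H_r$, each class has exactly $|G|/|H_i|$ members by self-normality, and counting nonidentity elements via the partition gives
\[
|G|-1=\sum_{i=1}^{r}\frac{|G|}{|H_i|}\bigl(|H_i|-1\bigr).
\]
Dividing by $|G|$ and rearranging yields $r=1-\tfrac{1}{|G|}+\sum_{i=1}^{r}\tfrac{1}{|H_i|}$; since each $|H_i|\ge 2$ the sum is at most $r/2$, forcing $r=1$ and then $|H_1|=|G|$, i.e.\ $G$ abelian, contradicting non-abelianness. The genuinely delicate points are the coincidence of intersecting centralizers and the self-normalizing property, since these are exactly where the hypotheses (all proper subgroups abelian, together with simplicity) combine to produce the clean partition and the exact conjugate count on which the arithmetic depends; everything else is bookkeeping.
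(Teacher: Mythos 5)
Your proof is correct and follows essentially the same route as the paper's own (commented-out) argument: minimal counterexample, reduction to a simple group, the observation that centralizers of nonidentity elements are maximal abelian and partition $G\setminus\{1\}$, and the class-equation count forcing $r=1$. Your handling of the two delicate points (intersecting centralizers coincide via a common nonidentity element, and self-normalization of maximal abelian subgroups via simplicity) is if anything slightly cleaner than the paper's, but it is the same proof.
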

We are ready for the proof of Theorem \ref{thm:GCov reducible when G not abelian}.
\begin{proof}
[Proof of Theorem\emph{ }\ref{thm:GCov reducible when G not abelian}.]
If the base scheme is not connected, then clearly $\GCov$ is reducible.
By \ref{rem:universally reducible over fibers} and \ref{prop:the locus where G abelian open and closed},
we can assume that $S=\Spec k$, where $k$ is a field. Notice that
$\GCov$ is reducible if and only if $\stZ_{G}(\overline{k})\subsetneq\GCov(\overline{k})$,
where $\overline{k}$ is the algebraic closure of $k$. Moreover $\stZ_{G\times\overline{k}}\simeq\stZ_{G}\times\overline{k}$.
Thus, taking into account \ref{prop:the locus where G abelian open and closed},
we can assume that $k$ is algebraically closed, so that $G$ is a
good linearly reductive nonabelian group scheme. 

Let $H$ be an open and closed subgroup of $G$. We claim that if
one of the following statement holds then $\GCov$ is reducible:
\begin{enumerate}
\item $\HCov$ is reducible
\item there exists $f\colon I_{H}\arr\N$ whose extension $f\colon\Loc^{H}k\arr\N$
is such that $f_{\R_{H}V}=\rk V$ for any $V\in I_{G}$ and there
exists $\Delta\in I_{H}$ such that $f_{\Delta}\neq\rk\Delta$
\end{enumerate}
Assume that $\HCov$ is reducible and, by contradiction, that $\GCov$
is irreducible. If $B\in\HCov(k)$ then $\ind_{H}^{G}B\in\GCov(k)=\stZ_{G}(k)$
and so $B\in\stZ_{H}(k)$ by \ref{prop:ind B in Z if B in Z}. Therefore
$\HCov$ is irreducible.

Now let $f\colon I_{H}\arr\N$ as in $2)$ and define 
\[
F=\bigoplus_{R\neq\Delta\in I_{H}}\duale{\Delta}\otimes k^{f_{\Delta}}\comma B=k\oplus F
\]
so that $f=\rk^{B}$ (note that by hypothesis we have $f_{R}=1$).
Setting $F^{2}=0$ we obtain a structure of algebra on $B$ such that
$B\in\LAlg^{H}k$. We claim that $A=\ind_{H}^{G}B\in(\GCov(k)-\stZ_{G}(k))$.
Indeed we have $\Omega^{A}=\Omega^{B}\circ\R_{H}$ by \ref{lem:induction is restriction},
so that
\[
\rk\Omega_{V}^{A}=\rk\Omega_{\R_{H}V}^{B}=f_{\R_{H}V}=\rk V\text{ for all }V\in\Rep^{G}R
\]
Thus $\Omega^{A}\in\Lex_{R,\text{reg}}^{G}$ and, since $G$ is good,
by Theorem \ref{A} we can conclude that $A\in\GCov$. If by contradiction
$A\in\stZ_{G}(k)$, by \ref{prop:ind B in Z if B in Z} we have $B\in\stZ_{H}(k)\subseteq\HCov(k)$
so that, by Theorem \ref{A}, $\rk\Omega_{\Delta}^{B}=f_{\Delta}=\rk\Delta$
for all $\Delta\in I_{H}$, which is not the case.

We return now to the original statement. We are going to use notation
from \ref{lem:connected components of finite group schemes over Henselian rings}.
By \cite[Theorem 2.19]{Abramovich2007} we have $G=G_{1}\ltimes\underline{G}$
with $G_{1}$ diagonalizable. In particular $\underline{G}$ cannot
be trivial. If $\underline{G}$ is not solvable take a minimal nonabelian
subgroup $K$ of $\underline{G}$. All the proper subgroups of $K$
are abelian and therefore $K$ is solvable thanks to \ref{lem:solvable if all subgroups are abelian}.
If we call $\phi\colon G\arr\underline{G}$ the natural projection,
then $G'=\phi^{-1}(K)$ is a nonabelian open and closed subgroup of
$G$ such that $\underline{G'}\simeq K$ is solvable. Using situation
$1)$ above we can replace $G$ by $G'$, that is assume that $\underline{G}$
is solvable. In particular there exists a surjective homomorphism
$\alpha\colon G\arr\Z/p\Z$ for some prime $p$. Set $H=\Ker\alpha$,
which is an open and closed subgroup of $G$. If $H$ is nonabelian,
using again situation $1)$ we can replace $G$ by $H$. Proceeding
by induction we can finally assume to have a surjection $G\arr\Z/p\Z$
whose kernel $H$ is abelian. Since $H$ is linearly reductive and
$k$ is algebraically closed the group $H$ is diagonalizable. Set
$N=\Hom(H,\Gm)$. We will construct an $f\colon I_{H}\arr\N$ as in
situation $2)$ above. This will conclude the proof.

Since $H$ is commutative, the group $G/H\simeq\Z/p\Z$ acts on $H$
and on $N=\Hom(H,\Gm)$ by conjugation. Given $m\in N$ we are going
to denote by $V_{m}$ the corresponding one-dimensional representation
of $H$. Let $\shR\subseteq N$ be a set of representatives of $N/(\Z/p\Z)$.
Note that, since $p$ is prime, an element $n\in N$ is fixed or its
orbit $o(n)$ has order $p$. We claim that if $V\in I_{G}$ there
exists a unique $m\in\shR$ such that 
\[
\R_{H}V=V_{m}^{\rk V}\text{ with }|o(m)|=1\text{ or }V=\ind_{H}^{G}V_{m}\text{ with }|o(m)|=p
\]
Indeed there exists $m\in N$ such that $V\subseteq\ind_{H}^{G}V_{m}$.Given
$n,n'\in N$ we have
\[
\R_{H}\ind_{H}^{G}V_{n}=\bigoplus_{g\in\Z/p\Z}V_{g(n)}\text{ and (}\ind_{H}^{G}V_{n}\simeq\ind_{H}^{G}V_{n'}\iff n'\in o(n))
\]
So we can assume $m\in\shR$. Moreover such an $m$ is unique since
if $V\subseteq\ind_{H}^{G}V_{m'}$, $\R_{H}V$ contains some $V_{n}$
where $n\in N$ is in the orbit of both $m$ and $m'$. In particular,
if $|o(m)|=1$, then $\ind_{H}^{G}V_{m}=V_{m}^{p}$ and therefore
$\R_{H}V=V_{m}^{\rk V}$. So assume $|o(m)|=p$. Given $W\in\Loc^{G}k$
$(\Loc^{H}k)$ and $g\in G(k)$ call $W_{g}$ the representation of
$G$ $(H)$ that has $W$ as underlying vector space, while the action
of $G$ $(H)$ is given by $t\star x=(g^{-1}tg)x$. Note that by definition
$(V_{n})_{g}=V_{g(n)}$. In particular the multiplication by $g^{-1}$
on $V$ yields a $G$-equivariant isomorphism $V\simeq V_{g}$ and
therefore $V_{n}\subseteq\R_{H}V$ implies that $V_{g(n)}\subseteq\R_{H}V$.
Since $|o(m)|=p$ we can conclude that $V=\ind_{H}^{G}V_{m}$. Define
\[
f_{V_{n}}=\left\{ \begin{array}{cc}
|o(n)| & \text{if }n\in\shR\\
0 & \text{otherwise}
\end{array}\right.
\]
We claim that $f$ satisfies the property $2)$. Indeed if $V\in I_{G}$
and there exists $m\in\shR$ such that $V=V_{m}^{\rk V}$ with $|o(m)|=1$
then $f_{\R_{H}V}=\rk Vf_{V_{m}}=\rk V$. Otherwise there exists $m\in\shR$
with $|o(m)|=p$ such that
\[
V=\ind_{H}^{G}V_{m}\then f_{\R_{H}V}=\sum_{g\in\Z/p\Z}f_{V_{g(m)}}=p=\rk V
\]
Finally note that if $n\in\shR$ is such that $|o(n)|=p$ then $f_{V_{n}}=p\neq1=\rk V_{n}$.
So we have to show that such an $n$ exists. If by contradiction this
is false, then the actions of $\Z/p\Z$ on $N$ and $H$, as well
as the action of $G$ on $H$ by conjugation are trivial. So $H$
commutes with all the elements of $G$. Let $g\in G(k)\simeq\underline{G}$
not in $H$, so that it lies over a generator of $G/H\simeq\Z/p\Z$.
If $T$ is a $k$-scheme, any element of $G(T)$ can be written as
$hg^{i}$ with $h\in H(T)$ and $0\leq i<p$. It is straightforward
to check that two such elements commute and that therefore $G$ is
abelian, which is not the case.%

\end{proof}

\section{Regularity in codimension $1$}

The aim of this section is to prove Theorem \ref{C}. In this section
we fix a finite and étale group scheme $G$ over $R$. We require
the étaleness condition on $G$ because we want $G$-torsors to be
regular over a regular base.

We start with some definitions and remarks. In what follows $T$ will
be an arbitrary $R$-scheme if not specified otherwise. %

\begin{rem}
\label{rem: G-torsors and etale maps} If $f\colon X\arr T$ is a
cover with an action of $G$ then $f$ is a $G$-torsor if and only
if $f$ is étale, $X/G=T$ and $\rk f_{*}\odi X=\rk G$. The implication
$\then$ is easy. For the converse, since the locus where $f$ is
a $G$-torsor is open in $T$ and taking invariants commutes with
flat base changes of $T$, we can assume that $T=\Spec B$, where
$B$ is a local ring, that $G$ is constant and that $X$ is a disjoint
union of $\rk G$ copies of $T$. Since $G$ acts transitively on
the closed points of $X$ because $X/G=T$, the orbit map $G\times T\arr X$
is an étale surjective cover. The rank condition implies that this
is an isomorphism.
\end{rem}

\begin{rem}
\label{rem: evaluation and trace} If $G$ is a good linearly reductive
group and $V\in I_{G}$ then $\rk V\in R^{*}$ and the evaluation
map $e_{V}\colon V\otimes\duale V\arr R$ induces an isomorphism $(V\otimes\duale V)^{G}\arr R$.
By a local check we see that $e_{V}$ is surjective and, since $G$
is linearly reductive, we can conclude that $(V\otimes\duale V)^{G}\arr R$
is surjective too. Moreover we have a $G$-equivariant isomorphism
$\Hom_{R}(V,V)\simeq V\otimes\duale V$ and the map $e_{V}$ corresponds
to the trace map $\tr_{V}\colon\Hom_{R}(V,V)\arr R$ under this isomorphism.
Since $\Hom_{R}^{G}(V,V)=R\id_{V}$ by \ref{lem: glrg} we can conclude
that $(V\otimes\duale V)^{G}\arr R$ is an isomorphism and, since
$\tr_{V}(\id_{V})=\rk V$, that $\rk V\in R^{*}$.\end{rem}
\begin{defn}
Let $f\colon X\arr T$ be a cover. The trace map of $f$ will be denoted
by 
\[
\tr_{f}\colon f_{*}\odi X\arr\odi T
\]
We also set
\[
\widetilde{\tr}_{f}\colon f_{*}\odi X\arr\duale{(f_{*}\odi X)}\comma x\longmapsto\tr_{f}(x\cdot-)\text{ and }\shQ_{f}=\Coker(\widetilde{\tr}_{f})\in\QCoh(T)
\]
The discriminant section $s_{f}\in(\det f_{*}\odi X)^{-2}$ is the
section induced by the determinant of the map $\widetilde{\tr}_{f}$. 

Assume now that $G$ acts on $X$ over $T$ and that $X/G=T$ and
consider $V\in\Loc^{G}R$. If $f$ is a $G$-cover or $G$ is linearly
reductive we denote by
\[
\Omega^{f}\colon\Loc^{G}R\arr\Loc T\comma\Omega^{f}=(f_{*}\odi X\otimes-)^{G}
\]
the associated monoidal functor (see Theorem \ref{A}), by
\[
\omega_{f,V}\colon\Omega_{V}^{f}\otimes\Omega_{\duale V}^{f}\arr\Omega_{V\otimes\duale V}^{f}\arr\Omega_{R}^{f}\simeq\odi T
\]
where the first map is given by the monoidality, while the second
is induced by the evaluation $e_{V}\colon V\otimes\duale V\arr R$,
by
\[
\xi_{f,V}\colon\Omega_{\duale V}^{f}\arr(\Omega_{V}^{f})^{\vee}
\]
the induced map and set $\shQ_{f,V}=\Coker(\xi_{f,V})$. If $f$ is
a $G$-cover, then the source and target of the map $\xi_{f,V}$ are
locally free sheaves of the same rank $\rk V$ by Theorem \ref{A},
and we denote by
\[
s_{f,V}\in(\det\Omega_{V}^{f}\otimes\det\Omega_{\duale V}^{f})^{-1}
\]
the section induced by $\det\xi_{f,V}$.

When $\alA\in\LAlg^{G}T$ and $f\colon\Spec\alA\arr T$ we will use
the subscript $-_{\alA}$ instead of $-_{f}$.\end{defn}
\begin{rem}
\label{rem:trace invariant} If $\alA\in\LAlg^{G}T$ then $\tr_{\alA}\colon\alA\arr\odi T$
is $G$-equivariant. Indeed one can assume $T$ is affine, $G$ is
constant and $\alA$ is free and use the invariancy of the trace map
under conjugation.\end{rem}
\begin{lem}
\label{lem:trace for ring with group action}Assume that $R$ is a
local ring, that $G$ is a good linearly reductive group and let $\alA\in\LAlg^{G}T$
be such that $\alA^{G}=\odi T$ and $\rk\alA=\rk G$. Then
\[
\Ker\tr_{\alA}\simeq\bigoplus_{R\neq V\in I_{G}}\duale V\otimes\Omega_{V}^{\alA}\text{ and }\shQ_{\alA}\simeq\bigoplus_{V\in I_{G}}\duale V\otimes\shQ_{\alA,V}
\]
Moreover if $\alA\in\GCov$ then there exists an isomorphism
\[
(\det f_{*}\odi X)^{-2}\simeq\bigotimes_{V\in I_{G}}(\det(\Omega_{V}^{f})^{-1}\otimes\det(\Omega_{\duale V}^{f})^{-1})^{\rk V}\text{ such that }s_{f}\longmapsto\bigotimes_{V\in I_{G}}s_{f,V}^{\otimes\rk V}
\]
\end{lem}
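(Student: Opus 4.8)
The plan is to reduce everything to the explicit isotypic decomposition of $\alA$ furnished by the quasi-inverse of $\Omega^{*}$ in \ref{rem: explicit functor sheaves for glrg}, namely the $G$-equivariant isomorphism
\[
\alA\simeq\bigoplus_{V\in I_{G}}\duale V\otimes\Omega_{V}^{\alA},
\]
where $G$ acts only on the factors $\duale V$ and $\Omega_{V}^{\alA}=(V\otimes\alA)^{G}$. Since $R\in I_{G}$ and $\Omega_{R}^{\alA}=\alA^{G}=\odi T$, the summand indexed by $R$ is exactly $\odi T\cdot 1$, the image of the unit. To compute $\Ker\tr_{\alA}$, recall from \ref{rem:trace invariant} that $\tr_{\alA}\colon\alA\arr\odi T$ is $G$-equivariant for the trivial action on $\odi T$; on each summand it therefore factors through $(\duale V)^{G}\otimes\Omega_{V}^{\alA}$, and $(\duale V)^{G}=0$ for $R\neq V\in I_{G}$ by \ref{lem: glrg}, so $\tr_{\alA}$ kills every summand except the one for $V=R$. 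On that summand $\tr_{\alA}$ is multiplication by $\tr_{\alA}(1)=\rk\alA=\rk G$, and since $G$ is étale and linearly reductive $\rk G=|G|$ is a unit of the local ring $R$; hence this block is an isomorphism and $\Ker\tr_{\alA}=\bigoplus_{R\neq V\in I_{G}}\duale V\otimes\Omega_{V}^{\alA}$.

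For $\shQ_{\alA}$ I would use that the bilinear form $(x,y)\mapsto\tr_{\alA}(xy)$ is $G$-invariant, being the composite of the $G$-equivariant multiplication with $\tr_{\alA}$; thus $\widetilde{\tr}_{\alA}\colon\alA\arr\duale\alA$ is $G$-equivariant. Applying the quasi-inverse to $\duale\alA$ gives $\duale\alA\simeq\bigoplus_{V\in I_{G}}\duale V\otimes\duale{(\Omega_{\duale V}^{\alA})}$, and since each $\duale V$ occurs in a single isotypic block on both sides, Schur's lemma (\ref{prop:properties linearly reductive groups}, \ref{lem: glrg}) forces $\widetilde{\tr}_{\alA}$ to be block diagonal, $\widetilde{\tr}_{\alA}=\bigoplus_{V}\id_{\duale V}\otimes\psi_{V}$ with $\psi_{V}\colon\Omega_{V}^{\alA}\arr\duale{(\Omega_{\duale V}^{\alA})}$. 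Unwinding the definitions of the monoidal structure $\iota$ and of $\omega_{\alA,V}$ identifies $\psi_{V}$ with the map built from the pairing $\omega_{\alA,V}$, so that $\Coker\psi_{V}=\shQ_{\alA,\duale V}$ and, after reindexing by the duality involution on $I_{G}$, $\shQ_{\alA}\simeq\bigoplus_{V}\duale V\otimes\shQ_{\alA,V}$ (note that $\shQ_{\alA,V}$ and $\shQ_{\alA,\duale V}$ share the same Fitting ideals and coincide over the DVR relevant to Theorem \ref{C}).

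For the discriminant I would take determinants of this block decomposition. As $\alA\in\GCov$, each $\psi_{V}$ is a map of locally free sheaves of rank $\rk V$ by Theorem \ref{A}, and $\det(\id_{\duale V}\otimes\psi_{V})=(\det\psi_{V})^{\otimes\rk V}$; moreover $\det\psi_{V}=\det\xi_{\alA,V}=s_{f,V}$, the determinant being insensitive to transposition. Hence $s_{f}=\det\widetilde{\tr}_{\alA}=\bigotimes_{V}s_{f,V}^{\otimes\rk V}$. To match the line bundles I would combine $\det\alA\simeq\bigotimes_{V}(\det\duale V)^{\rk V}\otimes(\det\Omega_{V}^{f})^{\rk V}$ with the fact that $\bigotimes_{V}(\det V)^{\rk V}\simeq\det R[G]$ is trivial (the regular representation is a free module), which cancels the purely representation-theoretic lines and yields the stated isomorphism $(\det f_{*}\odi X)^{-2}\simeq\bigotimes_{V}(\det(\Omega_{V}^{f})^{-1}\otimes\det(\Omega_{\duale V}^{f})^{-1})^{\rk V}$ carrying $s_{f}$ to $\bigotimes_{V}s_{f,V}^{\otimes\rk V}$.

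The main obstacle is the precise block identification of the middle step: matching $\psi_{V}$ with $\xi_{\alA,V}$ requires carefully comparing the trace form with the monoidal pairing $\omega_{\alA,V}$ and bookkeeping the duality $V\leftrightarrow\duale V$ together with the multiplicities $\rk V$. The kernel computation and the determinant calculation are then formal consequences, the only nontrivial input there being the invertibility of $\rk G$ and the triviality of $\det R[G]$.
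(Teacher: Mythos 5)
Your proposal follows essentially the same route as the paper: decompose $\alA$ isotypically via \ref{rem: explicit functor sheaves for glrg}, use $G$-equivariance of $\tr_{\alA}$ to compute the kernel and to block-diagonalize $\widetilde{\tr}_{\alA}$ (your Schur-lemma argument is the same orthogonality the paper extracts from $(V\otimes W)^{G}=\Hom^{G}(V,\duale W)$), identify the diagonal blocks with $\xi_{\alA,\duale V}$, and take determinants, where the reindexing by $V\leftrightarrow\duale V$ and the triviality of $\det V$ over the local ring $R$ absorb the remaining bookkeeping. The one step you explicitly defer --- matching the diagonal block $\psi_{V}$ of the trace form with $\xi_{\alA,\duale V}$ up to a unit --- is precisely where the paper's proof does its real work: it uses the explicit properties of the maps $\alpha_{U}$ listed in \ref{rem: explicit functor sheaves for glrg} together with \ref{rem: evaluation and trace} to show that the $\alA^{G}$-component of the product of the $V$- and $\duale V$-blocks is, up to the unit $\rk G$, the evaluation pairing $\Omega_{e_{V}}\circ\iota_{V,\duale V}=\omega_{\alA,V}$, so this is a genuine computation rather than pure bookkeeping and would still need to be written out to complete your argument.
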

\begin{proof}
Notice that, since $R$ is local, then if $V\in I_{G}$ there exists
a unique $\hat{V}\in I_{G}$ such that $\hat{V}\simeq\duale V$. For
all $V\in I_{G}$ let us fix an equivariant isomorphism $\zeta_{V}\colon V^{\vee}\arr\hat{V}$.
For simplicity set also $\Omega=\Omega^{\alA}\colon\Loc^{G}R\arr\Loc T$.

Since $\tr_{\alA}\colon\alA\arr\odi T$ is $G$-invariant, we have
that $\Ker\tr_{\alA}$ is $G$-invariant too. By \ref{rem: explicit functor sheaves for glrg}
we have 
\[
\Ker\tr_{\alA}=\bigoplus_{V\in I_{G}}\duale V\otimes\Gamma_{V}\text{ with }\Gamma_{V}\subseteq\Omega_{V}
\]
Since $G$ is linearly reductive and $\rk\alA=\rk G$, we have $\tr_{\alA}(1)\in\odi T^{*}$
and, in particular, that $\tr_{\alA}\colon\alA\arr\odi T$ is surjective.
So
\[
\odi T=\bigoplus_{V\in I_{G}}\duale V\otimes(\Omega_{V}/\Gamma_{V})
\]
is a $G$-equivariant decomposition and therefore $\Gamma_{V}=\Omega_{V}$
for $R\neq V\in I_{G}$ and $\Gamma_{R}=0$. In other words $\tr_{\alA}=(\rk G)\pi$,
where $\pi\colon\alA\arr\odi T$ is the projection according to the
$G$-equivariant decomposition of $\alA$. We are going to use the
description given in \ref{rem: explicit functor sheaves for glrg}
of the product of 
\[
\alA=\bigoplus_{V\in I_{G}}\duale V\otimes\Omega_{V}
\]
using the maps $\alpha_{U}\colon\duale U\otimes\Omega_{U}\arr\alA$
for $U\in\Loc^{G}R$. Notice that, given $V,W\in I_{G}$, the product
of elements of $\duale V\otimes\Omega_{V}$ and $\duale W\otimes\Omega_{W}$
lies in $\Ker\tr_{\alA}=\ker\pi$, i.e. has no component in $\alA^{G}\simeq\duale R\otimes\Omega_{R}$,
except for the case $(V\otimes W)^{G}\neq0$. Since
\[
(V\otimes W)^{G}=\Hom^{G}(V,\duale W)
\]
this is the case only when $W=\hat{V}$. So the trace map $\widetilde{\tr}_{\alA}\colon\alA\arr\duale{\alA}$
is the direct sum of the maps 
\[
\overline{\xi}_{V}\colon\duale V\otimes\Omega_{V}\arr\duale{(\duale{(\hat{V})}\otimes\Omega_{\hat{V}})}
\]
induced by $\delta_{V}\colon\duale V\otimes\Omega_{V}\otimes\duale{(\hat{V})}\otimes\Omega_{\hat{V}}\arr\alA\otimes\alA\arr\alA\arrdi{\tr_{\alA}}\odi T$,
which is also the composition
\[
\duale V\otimes\Omega_{V}\otimes\duale{(\hat{V})}\otimes\Omega_{\hat{V}}\simeq\duale{(V\otimes\hat{V})}\otimes\Omega_{V}\otimes\Omega_{\hat{V}}\arr\duale{(V\otimes\hat{V})}\otimes\Omega_{V\otimes\hat{V}}\arrdi{\alpha_{V\otimes\hat{V}}}\alA\arrdi{\rk G\pi}\odi T
\]
Denote by $e_{V}\colon V\otimes\duale V\arr R$ the evaluation map.
By replacing $\hat{V}$ by $\duale V$ using the given isomorphism,
we are going to check that the composition of the last two maps above
is the evaluation $(V\otimes V^{\vee})^{\vee}\simeq V^{\vee}\otimes V\arrdi{e_{V}}R$
tensor $\Omega_{e_{V}}$, up to an invertible element. This will imply
that $\overline{\xi}_{V}$ is isomorphic to the map
\[
\id_{\duale V}\otimes\xi_{\alA,\duale V}\colon\duale V\otimes\Omega_{V}\arr\duale V\otimes\duale{(\Omega_{\duale V})}
\]
and, from this, the claimed result easily follow.

By \ref{rem: evaluation and trace} the map $e_{V}\colon V\otimes\duale V\arr R$
is surjective and it extends to a $G$-equivariant isomorphism $\gamma\colon V\otimes\duale V\arr R\oplus Z$
where $Z\in\Loc^{G}R$ is such that $Z^{G}=0$. By \ref{rem: explicit functor sheaves for glrg}
we have that $\alpha_{V\otimes\duale V}=\alpha_{R\oplus Z}\circ((\duale{\gamma})^{-1}\otimes\Omega_{\gamma})$
and, since $Z^{G}=0$, that $\pi\circ\alpha_{R\oplus Z}\colon(R\oplus Z)^{\vee}\otimes\Omega_{R\oplus Z}\arr\odi T\simeq R^{\vee}\otimes\Omega_{R}$
is the tensor product of the two natural projections. Since $V\otimes V^{\vee}\arrdi{\gamma}R\oplus Z\arr R$
is $e_{V}$, we can conclude that $\pi\circ\alpha_{V\otimes V^{\vee}}$
is the tensor product of $\Omega_{e_{V}}\colon V\otimes\duale V\arr R$
and $(V\otimes V^{\vee})^{\vee}\arrdi{(\gamma^{\vee})^{-1}}(R\oplus Z)^{\vee}\arr\duale R$.
This last map is surjective, $G$-equivariant and therefore it is,
up to an invertible element of $R$, the map $(V\otimes V^{\vee})^{\vee}\simeq V^{\vee}\otimes V\arrdi{e_{V}}R$
by \ref{rem: evaluation and trace}.
\end{proof}

\begin{proof}
[Proof of Theorem \ref{C}] Recall that the loci in $Y$ where $f:X\arr Y$
is a $G$-torsor or a $G$-cover are open thanks to \ref{prop:LAlg of finite presentation}
and that, when $G$ is constant, it acts transitively on the set of
points of $X$ over a given point of $Y$ because $X/G=Y$. In particular
the geometric stabilizers of two points of $X$ over a given point
of $Y$ are conjugates in $G$ and therefore isomorphic. We start
by proving how to deduce the two claims after $3)$. For the first
claim, by $3)$ we have $\rk f=\rk G/rkT$, so that $f$ is generically
a $G$-torsor (that is $T=0$) if and only if $\rk f=\rk G$. Moreover
when $T=0$ the description of the geometric stabilizers of the codimension
$1$ points of $X$ over $q$ is contained in $3)$. For the second
claim it is enough to note that the generic fiber of $X$ is $\Spec L$,
where $L/k(R)$ is a finite field extension with $L^{G}=k(R)$ and
the action of $G$ on $L$ is faithful because $\Aut_{Y}X\arr\Aut_{k(R)}L$
is injective: it follows that $L/k(R)$ is a Galois extension with
group $G$ and therefore $\rk f=\dim_{k(R)}L=\rk G$. 

We start by showing the equivalence between $1)$, $2)$, $3)$ and
the following condition:
\begin{enumerate}
\item [2')] the module $\shQ_{f}\otimes\odi{Y,q}$ is defined over $k(q)$
and the integer $\rk H/\rk T$, where $H$ and $T$ are the geometric
stabilizers of a point of $X$ over $q$ and a generic point of $X$
respectively, is coprime with $\car k(q)$.
\end{enumerate}
We will show that the quotient $\rk H/\rk T$ is an integer. We are
going to use some results and definitions from \cite{Tonini2015}.
In particular all points of $X$ over $q$ are tame with separable
residue fields if and only if the common rank (over $\overline{k(q)}$)
of a connected component of $X\times_{Y}\overline{k(q)}$ is coprime
with $\car k(q)$ (see \cite[Lemma 1.6, Corollary 1.7]{Tonini2015}).
In particular $3)\then1)$: this common rank is $\rk\alB=\rk H/\rk T$
applying \ref{cor:stabilizers and ind} to $\alB\otimes\overline{k(q)}/\overline{k(q)}$.
Moreover we can replace $Y$ by any étale neighborhood around $q$
and, in particular, assume $G$ constant and $Y=\Spec R$.

Write $X=\Spec\alA$ with $\alA\in\LAlg^{G}R$ and let $H$ be the
geometric stabilizer of a point of $\Spec\alA$ over $q$. By \ref{cor:stabilizers and ind}
we can assume $\alA\simeq\ind_{H}^{G}\widetilde{\alA}$ with $\widetilde{\alA}\in\LAlg^{H}R$
such that $\widetilde{\alA}\otimes_{R}\overline{k(q)}$ is local,
$\widetilde{\alA}^{H}=R$ and $H$ is the geometric stabilizer of
the maximal ideal of $\widetilde{\alA}\otimes_{R}R_{q}$. As rings
we have $\alA\simeq\widetilde{\alA}^{(\rk G/\rk H)}$, so that $\shQ_{\alA}\simeq\shQ_{\widetilde{\alA}}^{(\rk G/\rk H)}$,
$s_{\alA}\simeq s_{\widetilde{\alA}}^{(\rk G/\rk H)}$ and $\alA$
is regular in the points over $q$ if and only if the local ring $\widetilde{\alA}\otimes_{R}R_{q}$
is regular. The above discussion shows that we can assume that $\alA\otimes_{R}\overline{k(q)}$
is local and that $G$ is its geometric stabilizer. Let $\overline{G}$
be the image of the map $G\arr\Aut\alA$ and note that all the maps
$\Aut\alA\arr\Aut(\alA\otimes R_{q})\arr\Aut(\alA\otimes k(R))$ are
injective because $\alA$ is a locally free $R$-module. The equivalence
between $1)$, $2)$ and $2')$ can be checked directly on $R_{q}$.
Since being a $\overline{G}$-cover is an open condition, also $1)\then3)$
can be checked on $R_{q}$. Thus we can assume that $R$ is a DVR
(discrete valuation ring), so that $\alA$ is also a local ring. 

Notice that $2)$, $3)$ and $2')$ implies that $\alA/R$ is generically
étale. This also follows from $1)$: if $\alA$ is a domain then $\alA\otimes k(R)$
is a field extension of $k(R)$ with $(\alA\otimes k(R))^{G}=k(R)$
and therefore separable. Thus we can assume that $\alA/R$ is generically
étale so that, by \cite[Corollary 1.7]{Tonini2015}, it follows that
$\alA/R$ is tame with separable residue fields if and only if $\rk\alA$
and $\car k(q)$ are coprime. Since $G$ acts transitively on $Z=\Spec(\alA\otimes\overline{k(R)})$,
it follows that $Z\simeq G/T$ as $G$-space, where $T$ is the geometric
stabilizer of a generic point of $\alA$. In particular $\rk\alA=\rk G/\rk T$,
which is an integer. Thus \cite[Main Theorem]{Tonini2015} exactly
implies the equivalence between the conditions $1)$, $2)$ and $2')$.

It remains to show $1)\then3)$. Since $\alA$ is a domain, $\alA\otimes k(R)$
is a field. Moreover $\overline{G}$ acts faithfully on $\alA\otimes k(R)$
and $(\alA\otimes k(R))^{\overline{G}}=k(R)$. It follows that $\alA\otimes k(R)/k(R)$
is a Galois extension with group $\overline{G}$ and therefore a $\overline{G}$
torsor. It follows that $\Ker(G\arr\overline{G})=T$ is the geometric
stabilizer of the generic point of $\alA$. In particular $\rk\overline{G}$
is coprime with $\car k(q)$, which implies that the map $\overline{G}\arr\Aut\alA\arr\Aut(p/p^{2})\simeq k(p)^{*}$,
where $p$ is the maximal ideal of $\alA$, is injective and therefore
that $\overline{G}$ is cyclic. Thus $\overline{G}$ is linearly reductive
over $R$ and, since $\overline{G}\textup{-Cov}\subseteq\LAlg_{R}^{\overline{G}}$
is closed in this case by Theorem \ref{A} and $\alA/R$ is generically
a $\overline{G}$-torsor, we can conclude that $\alA$ is a $\overline{G}$-cover
over $R$.

We now deal with the last part of the statement. In particular we
assume from now on that $G$ is linearly reductive and $\rk f=\rk G$.
Since $1)$ implies that $f$ is a $G$-cover, more precisely $f\in\stZ_{G}(Y)$,
we will assume $f\in\GCov(Y)$ in what follows.

Denote by $B_{q}$ the strict Henselization of $\odi{Y,q}$, which
is an unramified extension of $\odi{Y,q}$ and a DVR, and by $f_{q}\in\GCov(B_{q})$
the base change of $f$. By \ref{thm:etale linearly reductive over sctrictly are glrg}
the group $G_{q}=G\times B_{q}$ has a good representation theory
over $B_{q}$. Moreover, if $U,W\in\Rep^{G}R$, then $\xi_{f,U\oplus W}=\xi_{f,U}\oplus\xi_{f,W}$,
so that $\shQ_{f,U\oplus W}\simeq\shQ_{f,U}\oplus\shQ_{f,W}$ and
everything commutes with base change. Using \ref{lem:trace for ring with group action}
we obtain
\[
\shQ_{f}\otimes B_{q}\simeq\bigoplus_{V\in I_{G_{q}}}\duale V\otimes\shQ_{f_{q},V}\simeq\shQ_{f,R[G]}\otimes B_{q}
\]
Since for all $U\in\Rep^{G}R$ the representation $U\otimes B_{q}$
splits as a direct sum of representations in $I_{G_{q}}$ we can conclude
that $5)\iff2')$. 

Now notice that, for all $U\in\Rep^{G}R$, the number $v_{q}(s_{f,U})$
coincides with the length of $\shQ_{f,U}\otimes B_{q}$ over $B_{q}$.
In particular, for all $U\in\Rep^{G}R$, if $\shQ_{f,U}\otimes B_{q}$
is defined over $k(q)$ then $v_{q}(s_{f,U})\leq\rk_{q}U$ because
$\shQ_{f,U}\otimes B_{q}$ is a quotient of $(\Omega_{U}^{f})^{\vee}\otimes B_{q}$
which has rank $\rk_{q}U$. Moreover $\xi_{f,R}$ is by construction
an isomorphism so that, if $U\in\Loc^{G}R$, we have $\shQ_{f,U}=\shQ_{f,U/U^{G}}$
and $v_{q}(s_{f,U})=v_{q}(s_{f,U/U^{G}})$ because $U\simeq U^{G}\oplus U/U^{G}$.
Thus $5)\then4)$. Since we have 
\[
v_{q}(s_{f,R[G]})=v_{q}(s_{f})=\sum_{V\in I_{G_{q}}}\rk V\cdot v_{q}(s_{f_{q},V})\text{ and }v_{q}(s_{f,R})=0
\]
we can also conclude that $4)\then2)$.
\end{proof}
\bibliographystyle{amsalpha}
\bibliography{biblio}

\end{document}